\newtheorem{theorem}{Theorem}
\newtheorem{proof}{Proof}
\newtheorem{lemma}{Lemma}
\newtheorem{ass}{Assumption}
\newcommand{\Norm}[1]{\left\|#1\right\|}
\def \cite {\citep}
\def \E {\mathbb{E}}
\def \R {\mathbb{R}}
\def \z {\mathbf{z}}
\def \u {\mathbf{u}}
\def \v {\mathbf{v}}
\def \x {\mathbf{x}}
\def \y {\mathbf{y}}
\def \H {\mathcal{H}}
\def \LL {L}
\newcommand{\rA}{\mathrm{(A)}}
\newcommand{\rB}{\mathrm{(B)}}
\newcommand{\rC}{\mathrm{(C)}}
\newcommand{\cmark}{\text{\ding{51}}}
\newcommand{\xmark}{\text{\ding{55}}}
\title{Adaptive Variance Reduction for Stochastic Optimization under Weaker Assumptions}
\author{
  Wei Jiang\textsuperscript{\rm 1},~~Sifan Yang\textsuperscript{\rm 1,2},~~Yibo Wang\textsuperscript{\rm 1,2},~~Lijun Zhang\textsuperscript{\rm 1,2,}\thanks{Lijun Zhang is the corresponding author.}\\
  \textsuperscript{\rm 1}National Key Laboratory for Novel Software Technology, Nanjing University, Nanjing, China \\
  \textsuperscript{\rm 2}School of Artificial Intelligence, Nanjing University, Nanjing, China \\
\texttt{\{jiangw,yangsf,wangyb,zhanglj\}@lamda.nju.edu.cn}
 }
\begin{document}

\maketitle

\begin{abstract}
This paper explores adaptive variance reduction methods for stochastic optimization based on the STORM technique. Existing adaptive extensions of STORM rely on strong assumptions like bounded gradients and bounded function values, or suffer an additional $\mathcal{O}(\log T)$ term in the convergence rate. To address these limitations, we introduce a novel adaptive STORM method that achieves an optimal convergence rate of $\mathcal{O}(T^{-1/3})$ for non-convex functions with our newly designed learning rate strategy. Compared with existing approaches, our method requires weaker assumptions and attains the optimal convergence rate without the additional $\mathcal{O}(\log T)$ term. We also extend the proposed technique to stochastic compositional optimization, obtaining the same optimal rate of $\mathcal{O}(T^{-1/3})$. Furthermore, we investigate the non-convex finite-sum problem and develop another innovative adaptive variance reduction method that achieves an optimal convergence rate of $\mathcal{O}(n^{1/4} T^{-1/2} )$, where $n$ represents the number of component functions. Numerical experiments across various tasks validate the effectiveness of our method.
\end{abstract}

\section{Introduction}
This paper investigates the stochastic optimization problem
\begin{align}\label{prob}
    \min_{\x \in \R^d} f(\x),
\end{align}
where $f:\R^{d}\mapsto \R$ is a smooth non-convex function. We assume that only noisy estimations of its gradient $\nabla f(\x)$ can be accessed, denoted as $\nabla f(\x;\xi)$, where $\xi$ represents the random sample drawn from a stochastic oracle such that $\E[ \nabla f(\x;\xi) ] = \nabla f(\x)$. 

Problem~(\ref{prob}) has been comprehensively investigated in the literature~\cite{JMLR:v12:duchi11a,kingma:adam,loshchilov2017sgdr}, and it is well-known that the classical stochastic gradient descent~(SGD) achieves a convergence rate of $\mathcal{O}(T^{-1/4})$, where $T$ denotes the iteration number~\cite{SGD}. To further improve the convergence rate, variance reduction methods have been developed, and attain an improved rate of $\mathcal{O}(T^{-1/3})$ under a slightly stronger smoothness assumption~\cite{Fang2018SPIDERNN,Wang2018SpiderBoostAC}. However, these methods necessitate the use of a huge batch size in each iteration, which is often impractical to use. To eliminate the need for large batches, a momentum-based variance reduction method --- STORM~\cite{cutkosky2019momentum} is introduced, which achieves a convergence rate of $\mathcal{O}(T^{-1/3}\log T )$.

Although aforementioned methods are equipped with convergence guarantees, their analyses rely on delicate configurations of hyper-parameters, such as the learning rate and the momentum parameter. To set them properly, the algorithm typically needs to know the value of the smoothness parameter $L$, the gradient upper bound $G$, and the variance upper bound $\sigma$, which are often unknown in practice. Specifically, most algorithms require the learning rate $\eta_t$ smaller than $\mathcal{O}(1/L)$, and for the STORM method, setting the momentum parameter to $\mathcal{O}(L^2 \eta_t^2)$ is crucial for ensuring convergence~\cite{cutkosky2019momentum}.  

To overcome this limitation, many adaptive algorithms have been developed, aiming to obtain convergence guarantees without prior knowledge of problem-dependent parameters such as $L$, $G$ and $\sigma$. Based on the STORM method, \citet{levy2021storm} develop the STORM+ algorithm, which attains the optimal $\mathcal{O}(T^{-1/3})$ convergence rate under the assumption of bounded function values and gradients. To remove the need for the bounded function values assumption, \citet{Liu2022METASTORMGF} propose the META-STORM algorithm to attain an $\mathcal{O}(T^{-1/3} \log T)$ convergence rate, but it still requires the bounded gradients assumption and includes an additional $\mathcal{O}(\log T)$ term. In summary, despite advancements in this field, existing adaptive STORM-based methods either depend on strong assumptions or suffer an extra $\mathcal{O}(\log T)$ term compared with the lower bound~\citep{Arjevani2019LowerBF}. Hence, a fundamental question to be addressed is:

\shadowbox{\begin{minipage}[t]{0.95\columnwidth}%
\it Is it possible to develop an adaptive STORM method that achieves the optimal convergence rate for non-convex functions under weaker assumptions?
\end{minipage}} 

We give an affirmative answer to the above question by devising a novel optimal Adaptive STORM method~(Ada-STORM). The learning rate of our algorithm is set to be inversely proportional to a specific power of the iteration number $T$ in the initial iterations, and then changes adaptively based on the cumulative sum of past gradient estimations. In this way, we are able to adjust the learning rate dynamically according to the property of stochastic gradients, and ensure a small learning rate in the beginning. Leveraging this strategy, Ada-STORM achieves an optimal convergence rate of $\mathcal{O}(T^{-1/3})$ for non-convex functions.  Notably, our analysis does not require the function to have bounded values and bounded gradients, which is a significant advancement over existing methods~\cite{levy2021storm,Liu2022METASTORMGF}. Additionally, our convergence rate does not contain the extra $\mathcal{O}(\log T) $ term, which is often present in STORM-based methods~\cite{ cutkosky2019momentum,Liu2022METASTORMGF}. To highlight the versatility of our approach and its potential impact in the field of stochastic optimization, we further extend our technique to develop optimal adaptive methods for compositional optimization.

Finally, we investigate adaptive variance reduction for the non-convex finite-sum problems. Inspired by SAG algorithm~\citep{DBLP:conf/nips/RouxSB12}, we incorporate an additional term in the STORM estimator, which measures the difference of past gradients between the selected component function and the overall objective. By changing the learning rate according to the sum of past gradient estimations, we are able to obtain an optimal convergence rate of $\mathcal{O}(n^{-1/4}T^{-1/2})$ for finite-sum problems in an adaptive manner, where $n$ is the number of component functions. Our result is better than the previous convergence rate of $\mathcal{O}(n^{-1/4}T^{-1/2} \log (nT))$ obtained by adaptive SPIDER method~\citep{NEURIPS2022_94f625dc}. In summary, compared with existing methods, this paper enjoys the following advantages:
\begin{itemize}
\item For stochastic non-convex optimization, our method achieves the optimal convergence rate of $\mathcal{O}(T^{-1/3})$ under more relaxed assumptions. Specifically, it does not require the bounded function values or the bounded gradients, and does not include the additional $\mathcal{O}(\log T)$ term in the convergence rate.

\item Our learning rate design and the analysis exhibit broad applicability. We substantiate this claim by obtaining an optimal rate of $\mathcal{O}(T^{-1/3})$ for stochastic compositional optimization, using the technique proposed in this paper.

\item For non-convex finite-sum optimization, we further improve our adaptive algorithm to attain an optimal convergence rate of $\mathcal{O}(n^{1/4}T^{-1/2})$, which outperforms the previous result by eliminating the $\mathcal{O}( \log (nT ) )$ factor.
\end{itemize}
A comparison between our method and other STORM-based algorithms is shown in Table~\ref{sample}. Numerical experiments on different tasks also validate the effectiveness of the proposed method.
\begin{table*}[t]
\caption{Summary of results for STORM-based methods. Here, NC denotes non-convex,  Comp. indicates compositional optimization, FS represents finite-sum optimization, and BG/BF refers to requiring bounded gradients or bounded function values assumptions. Adaptive means the method does not require to know problem-dependent parameters, i.e., $L$, $G$, and $\sigma$.}
\label{sample}
\begin{center}
\begin{small}
\resizebox{\textwidth}{!}{
\begin{tabular}{lcccc}
\toprule
Method & Setting &   Convergence Rate & Adaptive & BG/BF \\
\midrule
STORM~\cite{cutkosky2019momentum}  & NC &   $\mathcal{O}\left( T^{-1/3}\log T\right)$ & \xmark & \cmark \\
Super-ADAM~\cite{NEURIPS2021_4be5a36c}  & NC &   $\mathcal{O}\left( T^{-1/3}\log T\right)$ & \xmark & -- \\
STORM+~\cite{levy2021storm}    & NC & $\mathcal{O}\left(T^{-1/3}\right)$ & \cmark & \cmark \\
META-STORM~\cite{Liu2022METASTORMGF} & NC & $\mathcal{O}\left( T^{-1/3}\log T\right)$ & \cmark &  \cmark \\
\midrule
\textbf{Theorem~\ref{thm:main_0}, \ref{thm:main_1}}   & NC &  $\mathcal{O}\left(T^{-1/3}\right)$& \cmark & -- \\
\textbf{Theorem~\ref{thm:main_0+}}  & NC \& Comp. &  $\mathcal{O}\left(T^{-1/3} \right)$& \cmark & -- \\
\textbf{Theorem~\ref{thm:main_2}}  & NC \& FS &  $\mathcal{O}\left(n^{1/4}T^{-1/2} \right)$& \cmark & -- \\
\bottomrule
\end{tabular}}
\end{small}
\end{center}
\end{table*}
\section{Related work}
This section briefly reviews related work on stochastic variance reduction methods and adaptive stochastic algorithms.
\subsection{Stochastic variance reduction methods}
Variance reduction has been widely used in stochastic optimization to reduce the gradient estimation error and thus improve the convergence rates. The idea of variance reduction can be traced back to the SAG algorithm~\cite{DBLP:conf/nips/RouxSB12}, which incorporates a memory of previous gradient values to ensure variance reduction and achieves a linear convergence rate for strongly convex finite-sum optimization. To avoid the storage of past gradients, SVRG ~\citep{NIPS2013_ac1dd209,NIPS:2013:Zhang} proposes to calculate the full gradient periodically, obtaining the same convergence rate as the SAG algorithm. Subsequent advancement has been made by the SARAH method~\citep{arxiv.1703.00102}, which derives better convergence for smooth convex functions. 

In the context of non-convex objectives, \citet{Fang2018SPIDERNN} introduce the SPIDER estimator, which improves the convergence rate from $\mathcal{O}(T^{-1/4})$ to $\mathcal{O}(T^{-1/3})$ in stochastic settings, and to $O(n^{1/4}T^{-1/2})$ in finite-sum scenarios, with $n$ representing the number of components in the finite-sum. 
Following this, the SpiderBoost algorithm~\citep{Wang2018SpiderBoostAC}  refines the SPIDER approach by employing a larger constant step size and adapting it for composite optimization problems. However, a common limitation among these methods is their reliance on large batch sizes for each iteration, posing practical challenges due to high computational demands. To mitigate this issue, \citet{cutkosky2019momentum} introduce the STORM method, a momentum-based technique that achieves an $\mathcal{O}(T^{-1/3}\log T)$ convergence rate without using large batches. Concurrently, \citet{trandinh2019hybrid} obtain the same result using a similar algorithm but through a different analysis.
\subsection{Adaptive stochastic algorithms}
For stochastic optimization, it is well-known that the SGD algorithm can obtain a convergence rate of $\mathcal{O}(T^{-1/4})$ for non-convex objective functions with well-designed learning rates~\cite{SGD}. Instead of using pre-defined iteration-based learning rates, many stochastic methods propose to adjust the learning rate based on past stochastic gradients. One of the foundational works is the AdaGrad algorithm~\cite{JMLR:v12:duchi11a}, which proves to be effective for sparse data. Further advancements include RMSprop~\cite{rmsprop} and  Adam~\cite{kingma:adam}, demonstrating broad effectiveness across a wide range of machine learning problems. Later, the Super-Adam~\cite{NEURIPS2021_4be5a36c} algorithm further improves the Adam algorithm via the variance reduction technique STORM~\cite{cutkosky2019momentum} and obtains a convergence rate of $\mathcal{O}(T^{-1/3}\log T)$. Nevertheless, to obtain the corresponding convergence rates, these methods still require knowledge of certain problem-dependent parameters to set hyper-parameters accurately, hence not  adaptive.\footnote{In this paper, adaptive means the algorithm does not require problem-dependent parameters to set up hyper-parameters such as the learning rate and the momentum parameter.} To solve this problem, many research aims to develop fully adaptive SGD methods that maintain the optimal convergence rate without knowing problem-specific parameters~\cite{NIPS2014_0ff8033c, Chen_Langford_Orabona_2022,pmlr-v178-carmon22a, Ivgi2023DoGIS,yang2023two}. 

Recently, adaptive adaptations of STORM have received considerable attention. A notable development is the introduction of STORM+ \cite{levy2021storm}, which presents a fully adaptive version of STORM while attaining an optimal convergence rate. To circumvent the bounded function values assumption in STORM+, the META-STORM~\cite{Liu2022METASTORMGF} approach is developed, equipped with a nearly optimal bound. However, META-STORM still requires the bounded gradients assumption, and it includes an additional $\mathcal{O}(\log T)$ in the convergence rate. Consequently, adaptive STORM with the optimal convergence rate and under mild assumptions still needs further explorations.

\section{Adaptive variance reduction for non-convex optimization}
In this section, we develop an adaptive STORM method for non-convex functions. We first outline the assumptions used, and then present our proposed method and analyze its convergence rate.

\subsection{Assumptions}
We introduce the following assumptions, which are standard and commonly adopted in the stochastic optimization~\cite{arxiv.1703.00102,Fang2018SPIDERNN,cutkosky2019momentum,pmlr-v139-li21a}.

\begin{ass} (Average smoothness)\label{ass1}
\begin{equation*}
\begin{split}
\mathbb{E}\left[\left\|\nabla f(\x;\xi) -\nabla f(\y;\xi)\right\|^{2}\right] \leq L^2\|\mathbf{x}-\mathbf{y}\|^{2}.
\end{split}
\end{equation*} 
\end{ass}

\begin{ass}\label{ass2}  (Bounded variance)
\begin{equation*}
\begin{split}
 \mathbb{E}\left[\left\|\nabla f(\x;\xi) -\nabla f(\mathbf{x})\right\|^{2}\right] \leq \sigma^{2}.
\end{split}
\end{equation*} 
\end{ass}

\begin{ass}\label{ass3} $f_{*}=\inf_{\x} f(\x) \geq-\infty$ and $f\left(\x_{1}\right)-f_{*} \leq \Delta_{f}$ for the initial solution $\x_{1}$.
\end{ass}
Note that some additional assumptions are required in other STORM-based methods. Specifically, STORM~\cite{cutkosky2019momentum}, STORM+~\cite{levy2021storm}, and META-STORM~\cite{Liu2022METASTORMGF} assume the bounded gradients. Moreover, STORM+ makes an additional assumption of the bounded function values. 



\subsection{The proposed method}
In this subsection, we aim to develop an adaptive STORM method that achieves an optimal convergence rate for non-convex functions under weaker assumptions. Our algorithm framework is the same as the original STORM~\cite{cutkosky2019momentum}, and the only difference is the setup of the momentum parameter $\beta_t$ and the learning rate $\eta_t$. First, we present the STORM algorithm in Algorithm~\ref{alg:storm}.
\begin{algorithm}[t]
	\caption{STORM Algorithm}
	\label{alg:storm}
	\begin{algorithmic}[1]
	\STATE {\bfseries Input:} time step $T$, initial point $\x_1$
	\FOR{time step $t = 1$ {\bfseries to} $T$}
        \STATE Set hyper-parameters $\beta_t$ and $\eta_t$
        \STATE Compute $\v_t$ according to equation~(\ref{STORM-v})
		\STATE Update the decision variable: $\x_{t+1} = \x_t - \eta_t \v_t$
		\ENDFOR
	\STATE Choose $\tau$ uniformly at random from $\{1, \ldots, T\}$
	\STATE Return $\x_\tau$
	\end{algorithmic}
\end{algorithm}

The core idea of STORM lies in a carefully devised variance reduced estimator $\v_t$, which effectively tracks the gradient  $\nabla f(\x_t)$. For the first iteration~($t=1$), we set $\v_1=\sum_{i=1}^{B_0} \frac{1}{B_0}\nabla f(\x_1;\xi_1^i)$, which is estimated within a batch $B_0 = T^{1/3}$. Note that we use large batch only in the first iteration, and constant batch size in other iterations. In subsequent iterations~($t\geq2$), estimator $\v_t$ is defined as:
\begin{align}\label{STORM-v}
\begin{split}
        \v_t = (1-\beta_t) \v_{t-1} + \beta_t \nabla f(\x_t;\xi_t) +(1-\beta_t) \left(\nabla f(\x_{t};\xi_t)-\nabla f(\x_{t-1};\xi_t)\right), 
\end{split}
\end{align}
where the first two terms are similar to the momentum SGD, and the last term serves as the error correction, which ensures the variance reduction effect. By choosing the values of $\beta_t$ and $\eta_t$ carefully, STORM ensures that the estimation error $\E[ \Norm{\v_t - \nabla f(\x_t)}^2 ]$ would decrease gradually. In the original STORM paper, these parameters are set up as:
\begin{align*}
    \eta_t = \frac{k}{\left( w+\sum_{i=1}^t \Norm{\nabla f(\x_{t};\xi_t)}^2\right)^{1/3}}, \quad
    \beta_t = c \eta_t^2,
\end{align*}
where $k=\mathcal{O}(G^{2/3} L^{-1})$, $w=\mathcal{O}(G^2)$ and  $c=\mathcal{O}(L^2)$. The settings of these hyper-parameters are crucial to the convergence analysis of STORM. However, it is worth noting that $L$ is the smoothness parameter and $G$ is the gradient upper bound, which are often difficult to determine in practice.

To address this problem, our approach defines the hyper-parameters as follows:
\begin{align}\label{main:eq}
\begin{split}
    \eta_t= \min \left\{ \frac{1}{T^{1/3}}, \frac{1}{T^{(1-\alpha)/3} \left( \sum_{i=1}^{t} \Norm{\v_i}^2\right)^{\alpha}} \right\}, \quad
    \beta_t =\beta = T^{-2/3}, 
\end{split}
\end{align}
where $0<\alpha <1/3$. Notably, our method does not rely on the parameters $L$ and $G$, and also does not need the bounded gradients or bounded function values assumptions that are common in other methods. Although our formulation initially requires knowledge of the iteration number $T$, this can be effectively circumvented using the doubling trick, which will be explained later. 
The above learning rate $\eta_t$ can also be expressed in an alternative, more illustrative manner:
\begin{equation*}
\begin{split}
    \eta_t=\left\{\begin{array}{ll}
\frac{1}{T^{1/3}} &  \text{if} \ \sum_{i=1}^{t} \Norm{\v_i}^2 \leq {T^{1/3}};\\ \\
\frac{1}{T^{(1-\alpha)/3} \left( \sum_{i=1}^{t} \Norm{\v_i}^2\right)^{\alpha}}   &  \text{else.}
\end{array}\right. 
\end{split}
\end{equation*}
This formulation ensures that the learning rate starts sufficiently small in the initial stages and then changes dynamically based on the gradient estimator $\v_t$. This design makes our learning rate setup and convergence analysis distinctly different from previous methods. Next, we present the following theoretical guarantee for our  algorithm.

\begin{theorem}\label{thm:main_0} Under Assumptions \ref{ass1}, \ref{ass2} and \ref{ass3},   Algorithm~\ref{alg:storm} with hyper-parameters in equation~(\ref{main:eq}) guarantees that:
\begin{align*}
    \E\left[ \Norm{\nabla f(\x_{\tau})} \right] \leq \mathcal{O}\left(\frac{\Delta_f^{\frac{1}{2(1-\alpha)}}+\sigma^{\frac{1}{1-\alpha}} + L^{\frac{1}{2\alpha}}}{T^{1/3}}\right).
\end{align*}
\end{theorem}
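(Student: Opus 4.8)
The plan is to follow the standard STORM descent-lemma framework but to handle the adaptive, random learning rate $\eta_t$ carefully so that no bounded-gradient assumption is needed. First I would write the smoothness-based descent inequality: since $f$ is $L$-smooth (which follows from Assumption~\ref{ass1} via Jensen), $f(\x_{t+1}) \le f(\x_t) - \eta_t \langle \nabla f(\x_t), \v_t\rangle + \tfrac{L}{2}\eta_t^2\Norm{\v_t}^2$. Using $\langle \nabla f(\x_t),\v_t\rangle = \tfrac12\Norm{\nabla f(\x_t)}^2 + \tfrac12\Norm{\v_t}^2 - \tfrac12\Norm{\v_t - \nabla f(\x_t)}^2$ and absorbing the $\tfrac{L}{2}\eta_t^2\Norm{\v_t}^2$ term using that $\eta_t \le T^{-1/3}$ is small, I would get, after summing over $t$ and rearranging,
\[
\sum_{t=1}^T \eta_t \Norm{\nabla f(\x_t)}^2 \;\lesssim\; \Delta_f + \sum_{t=1}^T \eta_t \Norm{\v_t - \nabla f(\x_t)}^2 - \tfrac{1}{2}\sum_{t=1}^T \eta_t\Norm{\v_t}^2 + (\text{lower-order}).
\]

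The second ingredient is the STORM error-recursion: letting $\epsilon_t = \v_t - \nabla f(\x_t)$, one has the familiar bound $\E[\Norm{\epsilon_t}^2 \mid \mathcal F_{t-1}] \le (1-\beta)^2\Norm{\epsilon_{t-1}}^2 + 2\beta^2\sigma^2 + 2(1-\beta)^2 L^2 \eta_{t-1}^2\Norm{\v_{t-1}}^2$. With $\beta = T^{-2/3}$, unrolling this recursion and summing gives $\sum_t \E[\Norm{\epsilon_t}^2] \lesssim \sigma^2/\beta + \beta\sigma^2 T + L^2\sum_t \E[\eta_t^2\Norm{\v_t}^2] \lesssim \sigma^2 T^{2/3} + L^2 \sum_t \E[\eta_t^2 \Norm{\v_t}^2]$; this uses the first-iteration large batch $B_0 = T^{1/3}$ to control $\Norm{\epsilon_1}^2$. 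The key structural point is that $\sum_t \eta_t^2\Norm{\v_t}^2 \le T^{-1/3}\sum_t \eta_t\Norm{\v_t}^2$, so the $L^2$-weighted error term can be folded back against the $-\tfrac12\sum_t\eta_t\Norm{\v_t}^2$ term from the descent lemma — provided the constant works out, which is where the smallness $\eta_t\le T^{-1/3}$ and the freedom in $\alpha$ are spent.

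The genuinely new part — and the main obstacle — is lower-bounding $\sum_t \eta_t \Norm{\nabla f(\x_t)}^2$ in terms of $\E[\Norm{\nabla f(\x_\tau)}]$ when $\eta_t$ is itself random and correlated with the gradients. Here I would exploit the explicit form of $\eta_t$: on the "small" branch $\eta_t = T^{-1/3}$, and on the other branch $\eta_t = T^{-(1-\alpha)/3}(\sum_{i\le t}\Norm{\v_i}^2)^{-\alpha}$. Because $\sum_{i\le t}\Norm{\v_i}^2$ is non-decreasing, $\eta_t$ is non-increasing, so $\eta_T \sum_t \Norm{\nabla f(\x_t)}^2 \le \sum_t \eta_t\Norm{\nabla f(\x_t)}^2$. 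It then remains to argue that $\eta_T$ is not too small with high enough weight, i.e. to bound $\E[(\sum_{i\le T}\Norm{\v_i}^2)^\alpha]$ from above. Since $\Norm{\v_i}^2 \le 2\Norm{\epsilon_i}^2 + 2\Norm{\nabla f(\x_i)}^2$ and we will already have shown $\sum_i\E[\Norm{\epsilon_i}^2]$ and $\sum_i\E[\eta_i\Norm{\nabla f(\x_i)}^2]$ are controlled, I would use Jensen/Hölder (the map $u\mapsto u^\alpha$ is concave since $\alpha<1/3<1$) to convert these into a bound on $\E[(\sum_i\Norm{\v_i}^2)^\alpha]$, and then a Cauchy–Schwarz/AM–GM split to pass from $\sum_t\eta_t\Norm{\nabla f(\x_t)}^2$ and the bound on $\eta_T^{-1}$ to $\left(\E\sum_t\Norm{\nabla f(\x_t)}\right)^2$ or directly to $\E[\Norm{\nabla f(\x_\tau)}]$. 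Balancing the resulting terms — one scaling like $\Delta_f + \sigma^2 T^{2/3}$ divided by $T^{2/3}$, one like $L^2$ times a power of the gradient sum — against each other produces the three summands $\Delta_f^{1/(2(1-\alpha))}$, $\sigma^{1/(1-\alpha)}$, and $L^{1/(2\alpha)}$ in the stated rate; the exponents $1/(2(1-\alpha))$ and $1/(2\alpha)$ are exactly the fingerprints of this self-bounding argument, with the $\alpha$-branch controlling the $L$-dependence and the $(1-\alpha)$-exponent coming from solving the quadratic-type inequality in $\sqrt{\sum_t\Norm{\nabla f(\x_t)}^2}$. I expect the bookkeeping of these inequalities (especially tracking that every "fold-back" leaves a strictly positive residual coefficient) to be the delicate part, while the two underlying lemmas are routine.
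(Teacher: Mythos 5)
Your skeleton (smoothness descent inequality plus the STORM error recursion, with the measurability of $\eta_t$ with respect to the past) matches the paper's Lemma~\ref{lem2}, but the two decisive steps of your plan have genuine gaps. First, the "fold-back": after weighting the error recursion by $\eta_t$, the problematic term is $\frac{2L^2}{\beta}\sum_t\eta_t^3\Norm{\v_t}^2$, and with $\beta=T^{-2/3}$, $\eta_t\le T^{-1/3}$ the powers of $T$ cancel \emph{exactly}, leaving $2L^2\sum_t\eta_t\Norm{\v_t}^2$. Absorbing this into $-\tfrac12\sum_t\eta_t\Norm{\v_t}^2$ would require $2L^2<\tfrac12$, and no largeness of $T$ or choice of $\alpha$ fixes this; since $L$ is unknown (the whole point of adaptivity), a linear absorption is impossible. (The same issue, in milder form, affects your absorption of $\tfrac{L}{2}\eta_t^2\Norm{\v_t}^2$, which needs $T\gtrsim L^3$.) The paper's proof resolves this differently: it splits time at the first $s$ with $\sum_{i\le t}\Norm{\v_i}^2\ge T^{1/3}$, uses the self-normalized sum inequality (Lemma~\ref{lem1}) to turn the offending sums into \emph{sublinear} powers $\bigl(T^{-1/3}\sum_{t\ge s}\Norm{\v_t}^2\bigr)^{1-3\alpha}$ and $(\cdot)^{1-2\alpha}$, and then peels them off with Young's inequality against a fraction of the lower bound $\Gamma$; this is precisely where the constants $L^{1/(2\alpha)}$, $L^{1/(3\alpha)}$ in the theorem come from. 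Your sentence "provided the constant works out, which is where the smallness $\eta_t\le T^{-1/3}$ and the freedom in $\alpha$ are spent" is exactly the place where a mechanism is needed and none is given.

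Second, your route to $\E[\Norm{\nabla f(\x_\tau)}]$ — keep $\sum_t\eta_t\Norm{\nabla f(\x_t)}^2$, lower-bound it by $\eta_T\sum_t\Norm{\nabla f(\x_t)}^2$, and control $\E\bigl[(\sum_i\Norm{\v_i}^2)^\alpha\bigr]$ — is circular as stated: bounding $\sum_i\Norm{\v_i}^2$ requires the \emph{unweighted} $\sum_i\Norm{\nabla f(\x_i)}^2$, which is the very quantity you are trying to extract, and the correlation is adverse ($\eta_T$ is smallest exactly when these sums are large), so $\E[\eta_T\sum_t\Norm{\nabla f(\x_t)}^2]$ cannot be decoupled by independence, and Jensen applied to the concave map $u\mapsto u^\alpha$ (or to $x\mapsto x(c+x)^{-\alpha}$) points in the wrong direction for the lower bound you need; the "quadratic-type self-bounding inequality" you invoke is not actually set up, and it is far from routine with a random, history-dependent $\eta_T$. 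The paper never performs this step: it drops the $\Norm{\nabla f(\x_t)}^2$ term altogether, tracks $\sum_t\eta_t\Norm{\v_t}^2$ — whose weights are self-normalized by the \emph{same} sequence $\Norm{\v_i}^2$, so Lemma~\ref{lem1} gives a deterministic lower bound of the form $\min\{T^{-1/3}\sum\Norm{\v_t}^2,(T^{-1/3}\sum\Norm{\v_t}^2)^{1-\alpha}\}$ — concludes $\E[\tfrac1T\sum_t\Norm{\v_t}]=\mathcal{O}(T^{-1/3})$, separately proves $\tfrac1T\sum_t\E[\Norm{\v_t-\nabla f(\x_t)}]=\mathcal{O}(T^{-1/3})$ via Lemma~\ref{lem3} (again with the stage split, Lemma~\ref{lem1}, and Young), and finishes with the triangle inequality $\Norm{\nabla f(\x_t)}\le\Norm{\v_t}+\Norm{\v_t-\nabla f(\x_t)}$. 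A smaller but real slip: your summed recursion drops the $1/\beta$ factor in front of $L^2\sum_t\eta_t^2\Norm{\v_t}^2$ (it should be $L^2T^{2/3}\sum_t\eta_t^2\Norm{\v_t}^2$), which is exactly the factor that makes the naive absorption impossible and forces the machinery above.
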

\textbf{Remark:} To ensure that $\E[ \Norm{\nabla f(\x_{\tau})} ] \leq \epsilon$, the overall complexity is $\mathcal{O}(\epsilon^{-3})$, which is known to be optimal up to constant factors~\cite{Arjevani2019LowerBF}. Compared with existing STORM-based algorithms~\cite{cutkosky2019momentum,levy2021storm,Liu2022METASTORMGF}, our method does not have the extra $\mathcal{O}(\log T)$ term in the convergence rate, and our analysis does not require bounded gradients or bounded function values assumptions. Also note that the selection of $\alpha$ does not affect the order of $T$, and larger $\alpha$ leads to better dependence on parameter $L$ and worse reliance on parameters $\Delta$ and $\sigma$. Considering we require that $0<\alpha<1/3$, we can simply set $\alpha=0.3$ in practice.

\subsection{The doubling trick}
While we have attained the optimal convergence rate using the proposed adaptive STORM method, it requires knowing the total number of iterations $T$ in advance. Here, we show that we can avoid this requirement by using the doubling trick, which divides the algorithm into several stages and increases the iteration number in each stage gradually. Specifically, we design a multi-stage algorithm over $k=\{1,2,\cdots,K\}$ stages. At the beginning of each new stage, we reset $\x_t=\x_0$. In each stage $k$, the STORM algorithm is executed for $2^{k-1}$ iterations, effectively doubling the iteration numbers after each stage. In any step $t$, we first identify the current stage as $1+\lfloor \log t\rfloor$ and then calculate the iteration number for this stage as $I_t = 2^{\lfloor \log t\rfloor}$. Then, we can set the hyper-parameters as:
\begin{align}\label{db-nc}
\begin{split}
    \eta_t= \min \left\{ \frac{1}{I_{t}^{1/3}}, \frac{1}{I_{t}^{(1-\alpha)/3} \left( \sum_{i={I_t}}^{t} \Norm{\v_i}^2\right)^{\alpha}} \right\}, \quad
    \beta_t  = I_{t}^{-2/3}, \quad I_t = 2^{\lfloor \log t\rfloor}. 
\end{split}
\end{align}
This approach eliminates the need to predetermine the iteration number $T$. By using the doubling trick, we can still obtain the same optimal convergence rate as stated in the following theorem.

\begin{theorem}\label{thm:main_1} Under Assumptions \ref{ass1}, \ref{ass2} and \ref{ass3}, Algorithm~\ref{alg:storm} with hyper-parameters in equation~(\ref{db-nc}) guarantees that:
\begin{align*}
    \E \left[\Norm{\nabla f(\x_\tau)} \right]\leq \mathcal{O}\left(\frac{\Delta_f^{\frac{1}{2(1-\alpha)}}+\sigma^{\frac{1}{1-\alpha}} + L^{\frac{1}{2\alpha}}}{{T}^{1/3}}\right).
\end{align*}
\end{theorem}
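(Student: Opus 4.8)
The plan is to reduce Theorem~\ref{thm:main_1} to Theorem~\ref{thm:main_0} stage by stage. With the choices in~(\ref{db-nc}) the run splits into stages $k=1,\dots,K$, where stage $k$ collects the iterations $t$ with $\lfloor\log t\rfloor=k-1$; it has length $N_k:=2^{k-1}$ and begins at global index $t_k:=2^{k-1}$. We may assume $T=2^{K}-1$ so that $\sum_{k=1}^{K}N_k=T$ and $K=\mathcal{O}(\log T)$ (a truncated final stage only shortens the last block and never hurts the bound). Inside stage $k$ the rule~(\ref{db-nc}) is \emph{exactly} the rule~(\ref{main:eq}) with $T$ replaced by $N_k$ and with the cumulative sum $\sum_i\Norm{\v_i}^2$ restarted at $i=t_k$; together with a fresh large-batch initialization of size $N_k^{1/3}$ at the start of the stage, each stage is an independent instance of Algorithm~\ref{alg:storm} run for $N_k$ steps (the extra oracle calls from the per-stage initial batches total $\sum_k N_k^{1/3}=\mathcal{O}(T^{1/3})$, which is negligible).

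Since $\tau$ is drawn uniformly from $\{1,\dots,T\}$, drawing $\tau$ is the same as drawing a stage $k$ with probability $N_k/T$ and then drawing $\tau$ uniformly within that stage, so
\begin{align*}
\E\!\left[\Norm{\nabla f(\x_\tau)}\right]=\sum_{k=1}^{K}\frac{N_k}{T}\,\E\!\left[\Norm{\nabla f(\x_{\tau_k})}\right],
\end{align*}
where $\tau_k$ is uniform on stage $k$. Conditioning on the state $\x_{t_k}$ at the start of stage $k$ and applying Theorem~\ref{thm:main_0} to that stage (it holds for an arbitrary deterministic initial point, hence for every realization of $\x_{t_k}$), with $c_1:=\tfrac{1}{2(1-\alpha)}$, $c_2:=\tfrac{1}{1-\alpha}$, $c_3:=\tfrac{1}{2\alpha}$ and $\Delta_{f,k}:=f(\x_{t_k})-f_*\ge 0$,
\begin{align*}
\E\!\left[\Norm{\nabla f(\x_{\tau_k})}\,\middle|\,\x_{t_k}\right]\le \mathcal{O}\!\left(\frac{\Delta_{f,k}^{\,c_1}+\sigma^{\,c_2}+L^{\,c_3}}{N_k^{1/3}}\right).
\end{align*}
Taking expectations and using Jensen's inequality ($x\mapsto x^{c_1}$ is concave on $[0,\infty)$ since $c_1<1$, using $\alpha<\tfrac13$) replaces $\E[\Delta_{f,k}^{\,c_1}]$ by $(\E[\Delta_{f,k}])^{c_1}$.

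The crux is a \emph{uniform} bound $\E[\Delta_{f,k}]\le \Delta_f':=\mathcal{O}(\Delta_f+\text{terms in }\sigma,L)$ with no dependence on $k$. This is genuinely necessary: $\sum_{k}N_k^{2/3}$ is a geometric series with ratio $2^{2/3}>1$, so $\sum_{k=1}^{K}N_k^{2/3}=\Theta(N_K^{2/3})=\Theta(T^{2/3})$, and any prefactor that grows with $k$ — even polylogarithmically in $T$ — would survive this sum and re-introduce the $\mathcal{O}(\log T)$ term we are trying to eliminate. I expect this to be the main obstacle. The way to get it is \emph{not} from the statement of Theorem~\ref{thm:main_0} directly but from the descent-type inequality underlying its proof: summing that inequality across stages makes the function values telescope, $\sum_k\big(\E[f(\x_{t_k})]-\E[f(\x_{t_{k+1}})]\big)\le\Delta_f$, and combined with $f\ge f_*$ and the (horizon-controlled) per-stage error terms this yields $\E[f(\x_{t_k})]-f_*\le\Delta_f'$ for every $k$. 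The subtlety is that STORM is not pathwise monotone, so the argument must be carried out at the level of an expected potential rather than trajectory by trajectory.

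Putting the pieces together,
\begin{align*}
\E\!\left[\Norm{\nabla f(\x_\tau)}\right]\le \mathcal{O}\!\left(\frac{1}{T}\sum_{k=1}^{K}N_k^{2/3}\Big((\Delta_f')^{c_1}+\sigma^{\,c_2}+L^{\,c_3}\Big)\right)=\mathcal{O}\!\left(\frac{(\Delta_f')^{c_1}+\sigma^{\,c_2}+L^{\,c_3}}{T^{1/3}}\right),
\end{align*}
using $\sum_k N_k^{2/3}=\Theta(T^{2/3})$. Since $\Delta_f'=\mathcal{O}(\Delta_f+\text{const})$, this matches the rate claimed in Theorem~\ref{thm:main_1} with the same exponents as in Theorem~\ref{thm:main_0}, and it holds without prior knowledge of $T$.
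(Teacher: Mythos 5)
Your stage-decomposition framework is sensible, but the proof has a genuine gap exactly where you flag it: the uniform-in-$k$ bound $\E[\Delta_{f,k}]\le\Delta_f'$ is asserted, not established, and the telescoping argument you sketch cannot deliver it. Telescoping only controls the net change $\E[f(\x_{t_1})]-\E[f(\x_{t_{K+1}})]\le\Delta_f$; it does not prevent individual stages from \emph{increasing} $\E[f]$. Indeed, the per-stage descent inequality (the analogue of equation~(\ref{smooth}) combined with the bounds on $\rA$, $\rB$, $\rC$) carries additive error terms of order constants in $\sigma$ and $L$ that do not shrink with the stage length $N_k$, so the most this route yields is $\E[\Delta_{f,k}]\le\Delta_f+\mathcal{O}(k)$, i.e.\ $\mathcal{O}(\log T)$ growth over the $K=\Theta(\log T)$ stages. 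By your own (correct) observation that the weights $N_k^{2/3}$ form a geometric series dominated by the last stages, this drift reintroduces a polylogarithmic factor, so the crux step of your argument is precisely the one that fails. Making the mixture-over-all-stages route work would require a genuinely new ingredient (e.g.\ a potential shown to be non-increasing in expectation across stages), which you do not supply.

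The paper avoids this difficulty altogether by not averaging over stages: it notes that $T$ iterations contain at least $K=\lfloor\log T\rfloor$ complete stages, takes the output only from the last complete stage, whose length is at least $T/4$, and applies the analysis of Theorem~\ref{thm:main_0} to that single run, using $(T/4)^{-1/3}=\mathcal{O}(T^{-1/3})$. This requires no per-stage bounds for every $k$ and no uniform control of $\E[\Delta_{f,k}]$; only the gap at the start of that one final stage enters (a point the paper itself passes over briskly, treating it as $\Delta_f$). Your insistence on keeping $\tau$ uniform over all $T$ iterations is what forces the uniform bound you cannot prove; restricting the output to the final complete stage, as the paper does, removes the obstacle you identified.
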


\section{Extension to stochastic compositional optimization}
To demonstrate the broad applicability of our proposed  technique, we extend it to stochastic compositional optimization~\cite{wang2017stochastic,DBLP:journals/jmlr/WangLF17,Yuan2019EfficientSN,Zhang2019ASC,Zhang2021MultiLevelCS,ICML:2023:Jiang,ICML:2024:Jiang}, formulated as:
\begin{align}\label{comp}
    \min_{\x \in \R^d} F(\x) = f(g(\x)),
\end{align}
where $f$ and $g$ are smooth functions. We assume that we can only access to unbiased estimations of $\nabla f(\x)$, $\nabla g(\x)$ and $g(\x)$, denoted as $\nabla f(\x;\xi)$, $\nabla g(\x;\zeta)$ and $g(\x;\zeta)$. Here $\xi$ and $\zeta$ symbolize the random sample drawn for a stochastic oracle such that $\E[ \nabla f(\x;\xi) ]=\nabla f(\x)$, $\E[  g(\x;\zeta) ]=g(\x)$, and $\E[ \nabla g(\x;\zeta)]=\nabla g(\x)$. 

Existing variance reduction methods~\cite{NEURIPS2019_21ce6891,Zhang2019ASC,qi2021online} are able to obtain optimal $\mathcal{O}(T^{-1/3})$ convergence rates for problem~(\ref{comp}), but they require the knowledge of smoothness parameter and the gradient upper bound to set up hyper-parameters. In this section, we aim to achieve the same optimal convergence rate without prior knowledge of problem-dependent parameters. We develop our adaptive algorithm for this problem as follows. In each step $t$, the algorithm maintains an inner function estimator $\u_t$ in the style of STORM, i.e.,
\begin{align}\label{CP-u}
\begin{split}
    \u_t = (1-\beta) \u_{t-1} +  g(\x_t;\zeta_t)- (1-\beta) g(\x_{t-1};\zeta_t). 
\end{split}  
\end{align}
Then, we construct a gradient estimator $\v_t$ based on $\u_t$ also in the style of STORM:
\begin{align}\label{CP-v}
\begin{split}
    \v_t = (1-\beta) \v_{t-1} +  \nabla f(\u_t;\xi_t)\nabla g(\x_t;\zeta_t) -(1-\beta)  \nabla f(\u_{t-1};\xi_t)\nabla g(\x_{t-1};\zeta_t).
\end{split}    
\end{align}

After that, we apply gradient descent using the gradient estimator $\v_t$. The whole algorithm is presented in Algorithm~\ref{alg:storm_cp}, and hyper-parameters are set the same as in equation~(\ref{main:eq}). For the first iteration, we simply set $\u_1=\sum_{i=1}^{B_0} \frac{1}{B_0}g(\x_1;\zeta_1^i)$ and $\v_1=\sum_{i=1}^{B_0} \frac{1}{B_0}\nabla f(\u_1;\xi_1^i)\nabla g(\x_1;\zeta_1)$, where $B_0 = T^{1/3}$. Next, we list common assumptions used in the literature of compositional optimization~\cite{wang2017stochastic,DBLP:journals/jmlr/WangLF17,Yuan2019EfficientSN,Zhang2019ASC,Zhang2021MultiLevelCS}.
\begin{algorithm}[t]
	\caption{Compositional STORM}
	\label{alg:storm_cp}
	\begin{algorithmic}[1]
	\STATE {\bfseries Input:} time step $T$, initial point $\x_1$
	\FOR{time step $t = 1$ {\bfseries to} $T$}
        \STATE Compute $\u_t$ according to equation~(\ref{CP-u})
        \STATE Compute $\v_t$ according to equation~(\ref{CP-v})
		\STATE Update the decision variable: $\x_{t+1} = \x_t - \eta_t \v_t$
		\ENDFOR
	\STATE Choose $\tau$ uniformly at random from $\{1, \ldots, T\}$
	\STATE Return $\x_\tau$
	\end{algorithmic}
\end{algorithm}

\begin{ass} (Average smoothness and Lipschitz continuity)\label{ass4}
\begin{equation*}
\begin{split}
\mathbb{E}\left[\left\|\nabla f(\x;\xi) -\nabla f(\y;\xi)\right\|^{2}\right] \leq L\|\mathbf{x}-\mathbf{y}\|^{2}; \ \mathbb{E}\left[\left\|f(\x;\xi) -f(\y;\xi)\right\|^{2}\right] \leq C \|\mathbf{x}-\mathbf{y}\|^{2}; \\
\mathbb{E}\left[\left\|\nabla g(\x;\zeta) -\nabla g(\y;\zeta)\right\|^{2}\right] \leq L\|\mathbf{x}-\mathbf{y}\|^{2};\  
\mathbb{E}\left[\left\|g(\x;\zeta) -g(\y;\zeta)\right\|^{2}\right] \leq C \|\mathbf{x}-\mathbf{y}\|^{2}.
\end{split}
\end{equation*} 
\end{ass}

\begin{ass}\label{asm:stochastic2+}  (Bounded variance)
\begin{equation*}
\begin{split}
	\mathbb{E}\left[\left\|g(\x;\zeta) - g(\mathbf{x})\right\|^{2}\right] \leq \sigma^{2}; 
 \mathbb{E}\left[\left\|\nabla g(\x;\zeta) -\nabla g(\mathbf{x})\right\|^{2}\right] \leq \sigma^{2}; 
 \mathbb{E}\left[\left\|\nabla f(\x;\xi) -\nabla f(\mathbf{x})\right\|^{2}\right] \leq \sigma^{2}.
\end{split}
\end{equation*} 
\end{ass}

\begin{ass}\label{asm:stochastic4+} $F_{*}=\inf_{\x} F(\x) \geq-\infty$ and $F\left(\x_{1}\right)-F_{*} \leq \Delta_{F}$ for the initial solution $\x_{1}$.
\end{ass}
\textbf{Remark:} In Assumption~\ref{ass4}, we further require standard Lipschitz continuity assumption, which is essential and widely required in the literature for stochastic compositional optimization~\cite{DBLP:journals/jmlr/WangLF17, Yuan2019EfficientSN,jiang2022multiblocksingleprobe,jiang2022optimal}. This assumption is inherently introduced by the compositional optimization itself rather than by our adaptive techniques.

With the above assumptions, our algorithm enjoys the following guarantee. 
\begin{theorem}\label{thm:main_0+} Under Assumptions \ref{ass4}, \ref{asm:stochastic2+} and \ref{asm:stochastic4+}, our Algorithm~\ref{alg:storm_cp} ensures that:
\begin{align*}
    \E\left[ \Norm{\nabla F(\x_\tau)} \right]\leq \mathcal{O}\left(T^{-1/3}\right).
\end{align*}
\end{theorem}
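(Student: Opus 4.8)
The plan is to mirror the analysis behind Theorem~\ref{thm:main_0}, replacing the single STORM estimator by the coupled pair $(\u_t,\v_t)$ defined in~(\ref{CP-u})--(\ref{CP-v}), and tracking two error quantities simultaneously: the inner error $\Norm{\u_t-g(\x_t)}^2$ and the gradient error $\Norm{\v_t-\nabla F(\x_t)}^2$. First I would establish the standard descent inequality from $L$-smoothness of $F$ (which follows from Assumption~\ref{ass4} together with the Lipschitz/bounded-Jacobian facts it implies): summing $F(\x_{t+1})-F(\x_t)\le -\tfrac{\eta_t}{2}\Norm{\nabla F(\x_t)}^2-(\tfrac1{2\eta_t}-\tfrac{L_F}{2})\Norm{\x_{t+1}-\x_t}^2+\tfrac{\eta_t}{2}\Norm{\v_t-\nabla F(\x_t)}^2$ over $t$ and using Assumption~\ref{asm:stochastic4+} to bound the telescoped left side by $\Delta_F$.

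Next I would derive the key recursion on the errors. For the inner estimator, expanding~(\ref{CP-u}) and using that $g(\x_t;\zeta_t)-g(\x_{t-1};\zeta_t)$ is, in conditional expectation, $g(\x_t)-g(\x_{t-1})$, I would get $\E\Norm{\u_t-g(\x_t)}^2\le(1-\beta)^2\E\Norm{\u_{t-1}-g(\x_{t-1})}^2+2\beta^2\sigma^2+2(1-\beta)^2 C\,\E\Norm{\x_t-\x_{t-1}}^2$, exactly the STORM-type bound with the $\Norm{\x_t-\x_{t-1}}^2=\eta_{t-1}^2\Norm{\v_{t-1}}^2$ coupling term. The gradient estimator~(\ref{CP-v}) behaves the same way after one extra step: the increment $\nabla f(\u_t;\xi_t)\nabla g(\x_t;\zeta_t)-\nabla f(\u_{t-1};\xi_t)\nabla g(\x_{t-1};\zeta_t)$ is controlled, via the product rule and Assumption~\ref{ass4}, by a constant times $(\Norm{\u_t-\u_{t-1}}^2+\Norm{\x_t-\x_{t-1}}^2)$, and $\Norm{\u_t-\u_{t-1}}^2$ is in turn bounded in terms of $\Norm{\u_{t-1}-g(\x_{t-1})}^2$, $\beta^2\sigma^2$, and $\Norm{\x_t-\x_{t-1}}^2$. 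Hence $\E\Norm{\v_t-\nabla F(\x_t)}^2$ obeys a recursion $(1-\beta)^2$-contractive in itself, forced by $O(\beta^2\sigma^2)$, by $O(\E\Norm{\u_{t-1}-g(\x_{t-1})}^2)$, and by $O(\E\Norm{\x_t-\x_{t-1}}^2)=O(\eta_{t-1}^2\E\Norm{\v_{t-1}}^2)$; the extra bias from using $\nabla f(\u_t)$ instead of $\nabla f(g(\x_t))$ contributes an additional $O(\E\Norm{\u_t-g(\x_t)}^2)$ term, which is why the two recursions must be carried together. Unrolling both and summing, with $\beta=T^{-2/3}$, the $\beta^2\sigma^2$ terms give $O(\sigma^2 T^{-1/3})$ after division by $\beta$, the initialization errors are $O(\sigma^2/(\beta B_0))=O(\sigma^2 T^{-1/3})$ thanks to the batch $B_0=T^{1/3}$, and we are left needing to control $\sum_t \eta_t^2\E\Norm{\v_t}^2$ and $\sum_t \eta_{t-1}\eta_t\E\Norm{\v_t}^2$-type cross terms.

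The decisive step — and the main obstacle — is exactly the same place the analysis of Theorem~\ref{thm:main_0} is delicate: reusing the adaptive learning rate~(\ref{main:eq}) to absorb $\sum_{t=1}^T \eta_t \E\Norm{\v_t - \nabla F(\x_t)}^2$ and $\sum_{t=1}^T \eta_t^3 \E\Norm{\v_t}^2$ into the negative $-\sum_t(\tfrac1{2\eta_t}-\tfrac{L_F}{2})\eta_t^2\Norm{\v_t}^2$ term and into $\Delta_F$. Because $\eta_t$ depends on $\sum_{i\le t}\Norm{\v_i}^2$, I would split on the two regimes: in the first regime $\sum_{i\le t}\Norm{\v_i}^2\le T^{1/3}$, where $\eta_t=T^{-1/3}$ is flat, all the offending sums are trivially $O(T^{-1/3}\cdot T^{1/3}\cdot\text{stuff})$; in the second regime $\eta_t=T^{-(1-\alpha)/3}(\sum_{i\le t}\Norm{\v_i}^2)^{-\alpha}$, I would apply the standard ``sum of $a_t/(\sum_{i\le t}a_i)^{\alpha}$'' telescoping lemma (the one underlying AdaGrad-type bounds) to get $\sum_t \eta_t\Norm{\v_t}^2 = O(T^{-(1-\alpha)/3}(\sum_t\Norm{\v_t}^2)^{1-\alpha})$, then close the loop: denoting $S_T=\sum_t\Norm{\v_t}^2$, the descent inequality yields $\eta_{\min}S_T\lesssim \Delta_F + (\text{error sums in }S_T)$, which is a self-bounding inequality of the form $S_T \lesssim \mathrm{poly}(T^{1/3},\Delta_F,\sigma,L)\cdot S_T^{\theta}$ with $\theta<1$, giving a polynomial bound on $S_T$ and hence on each $\eta_t$ from below. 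Feeding $\eta_\tau^{-1}=O(T^{(1-\alpha)/3}S_T^{\alpha})=O(T^{1/3}\cdot\mathrm{const})$ back into $\E\Norm{\nabla F(\x_\tau)}^2=\tfrac{1}{T}\sum_t\E\Norm{\nabla F(\x_t)}^2 \le \tfrac{2}{T}\sum_t\big(\tfrac{1}{\eta_t}(F(\x_t)-F(\x_{t+1}))+\E\Norm{\v_t-\nabla F(\x_t)}^2\big)$ and applying Jensen gives $\E\Norm{\nabla F(\x_\tau)}=O(T^{-1/3})$. The only genuinely new bookkeeping relative to Theorem~\ref{thm:main_0} is carrying the inner-error recursion in parallel and checking that its forcing terms ($\beta^2\sigma^2$, $\eta^2\Norm{\v}^2$) are of the same orders already handled; I expect no new phenomenon, so the constants (polynomial in $L,C,\sigma,\Delta_F$) can be left implicit in the $\mathcal{O}(T^{-1/3})$.
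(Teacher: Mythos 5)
Your bookkeeping up to the absorption step is essentially the paper's argument: the paper also runs two STORM-type recursions, one for $\Norm{\u_t-g(\x_t)}^2$ and one for $\Norm{\v_t-G_t}^2$ with $G_t=\nabla f(\u_t)\nabla g(\x_t)$, bounds the bias by $\Norm{\nabla F(\x_t)-G_t}^2\leq C^2L^2\Norm{g(\x_t)-\u_t}^2$, controls $\Norm{\u_{t+1}-\u_t}^2$ by $\beta^2\Norm{g(\x_t)-\u_t}^2+C^2\eta_t^2\Norm{\v_t}^2+\beta^2\sigma^2$, and then lands on an inequality of the same shape as Lemma~\ref{lem2}, which is handled by the regime split and Lemma~\ref{lem1} exactly as you describe. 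The gap is in your endgame. First, from the self-bounding inequality you only control an \emph{expectation} of a concave function of $S_T=\sum_t\Norm{\v_t}^2$ (because of the $\min$ structure in the learning rate, what comes out is a bound on $\E[\min\{S_T/T^{1/3},(S_T/T^{1/3})^{1-\alpha}\}]$); $S_T$ and $\eta_t$ are random and correlated with the trajectory, so the step ``$\eta_\tau^{-1}=O(T^{1/3}\cdot\mathrm{const})$'' is not justified pathwise, and you cannot simply move it inside the expectation of $\tfrac{1}{\eta_t}(F(\x_t)-F(\x_{t+1}))$. Second, even granting a uniform lower bound on $\eta_t$, the sum $\sum_t\tfrac{1}{\eta_t}(F(\x_t)-F(\x_{t+1}))$ does not telescope because $1/\eta_t$ is increasing and random; Abel summation leaves $\sum_t(\tfrac{1}{\eta_t}-\tfrac{1}{\eta_{t-1}})(F(\x_t)-F_*)$, whose control requires bounded function values along the trajectory --- precisely the STORM+-style assumption this theorem is meant to avoid. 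Third, your final display targets a second-moment bound $\E\Norm{\nabla F(\x_\tau)}^2\leq O(T^{-2/3})$, which this analysis cannot deliver (the $\min$ structure only yields $\E[(S_T/T^{1/3})^{1-\alpha}]$-type bounds, not $\E[S_T]\lesssim T^{1/3}$).

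The paper closes the argument without ever dividing by $\eta_t$: from $\E[\sum_t\eta_t\Norm{\v_t}^2]$ and the two-regime lower bound on the left-hand side it extracts $\E[\tfrac{1}{T}\sum_t\Norm{\v_t}]\leq O(T^{-1/3})$ directly; separately, an \emph{unweighted} version of the error recursion (the analogue of Lemma~\ref{lem3}, with $1/\beta=T^{2/3}$ absorbing the initialization and $\beta\sigma^2T$ terms) gives $\E[\sum_t\Norm{\v_t-\nabla F(\x_t)}^2]\leq O(T^{1/3})$, hence $\E[\tfrac{1}{T}\sum_t\Norm{\v_t-\nabla F(\x_t)}]\leq O(T^{-1/3})$ by Cauchy--Schwarz; the triangle inequality $\Norm{\nabla F(\x_t)}\leq\Norm{\v_t}+\Norm{\v_t-\nabla F(\x_t)}$ then yields the first-moment bound $\E\Norm{\nabla F(\x_\tau)}\leq O(T^{-1/3})$. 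You would need to replace your last step with this (or an equivalent) route for the proof to go through.
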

\textbf{Remark:} This rate matches the state-of-the-art~(SOTA) results in stochastic compositional optimization~\cite{NEURIPS2019_21ce6891,Zhang2019ASC,qi2021online}, and our method achieve this in an adaptive manner. Note that our convergence rate aligns with the lower bound for single-level problems~\cite{Arjevani2019LowerBF} and is thus unimprovable.

\section{Adaptive variance reduction for  finite-sum optimization}
In this section, we further improve our adaptive variance reduction method to obtain an enhanced convergence rate for non-convex finite-sum optimization, which is in the form of
\begin{align*}
    \min_{\x \in \R^d} F(\x) = \frac{1}{n}\sum_{i=1}^{n} f_i(\x),
\end{align*}
where each $f_i(\cdot)$ is a smooth non-convex function. Existing adaptive method for this problem~\cite{NEURIPS2022_94f625dc} achieves a convergence rate of $\mathcal{O}(n^{1/4}T^{-1/2} \log (nT ) )$ based on the variance reduction technique SPIDER~\cite{Fang2018SPIDERNN}, suffering from an extra $\mathcal{O}(\log (nT))$ term compared with the corresponding lower bound~\citep{Fang2018SPIDERNN,pmlr-v139-li21a}.

To obtain the optimal convergence rate for finite-sum optimization, we incorporate techniques from the SAG algorithm~\citep{DBLP:conf/nips/RouxSB12} into the STORM estimator. Specifically, in each step $t$, we start by randomly sample $i_t$ from the set $\{1,2,\cdots,n\}$. Then, we construct a variance reduction gradient estimator as
\begin{align}\label{FS-v}
\begin{split}
     \v_t = (1-\beta) \v_{t-1}+ \nabla f_{i_t}(\x_t) 
    -(1-\beta)\nabla f_{i_t}(\x_{t-1})  - \beta \left( g_t^{i_t} - \frac{1}{n} \sum_{i=1}^n g_t^i\right),
\end{split}
\end{align}
where the first three terms align with the original STORM method, and the last term, inspired by the SAG algorithm, deals with the finite-sum structure. Here, $g_t$ tracks the gradient as
\begin{equation}\label{FS-g}
\begin{split}
    g_{t+1}^i=\left\{\begin{array}{ll}
\nabla f_{i_t}(\x_t) &  i = i_t \\ 
g_{t}^i  &  i \neq i_t
\end{array}\right. .
\end{split}
\end{equation}
By such a design, we can ensure that the estimation error $\E[\Norm{\v_t - \nabla F(\x_t)}^2]$ reduces gradually. The whole algorithm is stated in Algorithm~\ref{alg:fs}. In this case, we set the hyper-parameters as:
\begin{align*}
     \eta_t = \frac{1}{n^{\frac{1-\alpha}{2}} \left(\sum_{i=1}^t \Norm{\v_i}^{2}\right)^{\alpha}}, \quad \beta = \frac{1}{n},
\end{align*}
where $0 < \alpha <1/3$. The learning rate $\eta_t$ is non-increasing and changes according to the gradient estimations, and the momentum parameter $\beta$ remains unchanged throughout the learning process.
Next, we show that our method enjoys the optimal convergence rate under the following assumptions, which are standard and widely adopted in existing literature~\citep{Fang2018SPIDERNN,Wang2018SpiderBoostAC,pmlr-v139-li21a}.
\begin{algorithm}[t]
	\caption{STORM for Finite-sum Optimization~(SAG-type)}
	\label{alg:fs}
	\begin{algorithmic}[1]
	\STATE {\bfseries Input:} time step $T$, initial point $\x_1$
	\FOR{time step $t = 1$ {\bfseries to} $T$}
        \STATE Sample $i_t$ randomly from $\{1,2,\cdots,n \}$
        \STATE Compute estimator $\v_t$ according to equation~(\ref{FS-v})
        \STATE Update  $g_{t+1}$ according to equation~(\ref{FS-g})
		\STATE Update the decision variable: $\x_{t+1} = \x_t - \eta_t \v_t$
		\ENDFOR
	\STATE Choose $\tau$ uniformly at random from $\{1, \ldots, T\}$
	\STATE Return $\x_\tau$
	\end{algorithmic}
\end{algorithm}
\begin{ass}\label{ass:finite} (Smoothness)
For each $i \in \{1, 2, \cdots, m\}$, function $f_i$ is $L$-smooth such that
\begin{align*}
     \|\nabla f_i(\x) - \nabla f_i(\y)\| \leq L\|\x - \y\|.
\end{align*}
\end{ass}
\begin{ass}\label{asm:stochastic4++} $F_{*}=\inf_{\x} F(\x) \geq-\infty$ and $F\left(\x_{1}\right)-F_{*} \leq \Delta_{F}$ for the initial solution $\x_{1}$.
\end{ass}
\begin{theorem}\label{thm:main_2} Under Assumptions \ref{ass:finite} and \ref{asm:stochastic4++}, our Algorithm~\ref{alg:fs} guarantees that: 
\begin{align*}
   \E\left[ \Norm{\nabla F(\x_{\tau})} \right] \leq \mathcal{O}\left(\frac{n^{1/4}}{T^{1/2}} \left(\Delta_F^{\frac{1}{2(1-\alpha)}}+ L^{\frac{1}{2\alpha}}\right)\right).
\end{align*}
\end{theorem}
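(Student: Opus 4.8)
\textbf{Proof proposal for Theorem~\ref{thm:main_2}.}

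The plan is to follow the standard STORM-style descent-lemma-plus-error-recursion strategy, but with the adaptive learning rate $\eta_t = n^{-(1-\alpha)/2}(\sum_{i=1}^t\Norm{\v_i}^2)^{-\alpha}$ and the SAG-type correction term handled carefully. First I would invoke $L$-smoothness (Assumption~\ref{ass:finite}) on $F$ to get the one-step descent inequality $F(\x_{t+1}) \le F(\x_t) - \eta_t \langle \nabla F(\x_t), \v_t\rangle + \frac{L}{2}\eta_t^2\Norm{\v_t}^2$, and then rewrite $-\langle \nabla F(\x_t),\v_t\rangle = \frac12(\Norm{\nabla F(\x_t)-\v_t}^2 - \Norm{\nabla F(\x_t)}^2 - \Norm{\v_t}^2)$. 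Summing over $t$ and telescoping $F$, this yields a bound of the form $\sum_t \eta_t \Norm{\nabla F(\x_t)}^2 \lesssim \Delta_F + \sum_t \eta_t \Norm{\nabla F(\x_t) - \v_t}^2 + \sum_t (L\eta_t^2 - \eta_t)\Norm{\v_t}^2$, where (since $\eta_t\to$ small) the last sum is eventually negative and can be used to cancel part of the error term.

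The heart of the argument is controlling the estimation error $e_t := \v_t - \nabla F(\x_t)$. Subtracting $\nabla F(\x_t)$ from the recursion~(\ref{FS-v}) and using $\E_{i_t}[\nabla f_{i_t}(\x)] = \nabla F(\x)$ and $\E_{i_t}[g_t^{i_t}] = \frac1n\sum_i g_t^i$, the SAG correction term is \emph{mean-zero} conditionally, so in expectation $\E\Norm{e_t}^2 \le (1-\beta)^2\E\Norm{e_{t-1} + (\nabla f_{i_t}(\x_t)-\nabla f_{i_t}(\x_{t-1})) - (\nabla F(\x_t)-\nabla F(\x_{t-1}))}^2 + \text{(variance terms)}$. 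Expanding, the cross term vanishes (martingale structure), giving $\E\Norm{e_t}^2 \le (1-\beta)^2\E\Norm{e_{t-1}}^2 + 2(1-\beta)^2 L^2\Norm{\x_t-\x_{t-1}}^2 + 2\beta^2\E\Norm{g_t^{i_t} - \frac1n\sum_i g_t^i}^2$. Here $\Norm{\x_t-\x_{t-1}}^2 = \eta_{t-1}^2\Norm{\v_{t-1}}^2$, and the SAG-memory term must be bounded: since each $g_t^i$ equals $\nabla f_i(\x_{s(i)})$ for some past step $s(i)$, one bounds $\Norm{g_t^{i_t}-\frac1n\sum g_t^i}^2 \lesssim \frac1n\sum_i \Norm{\nabla f_i(\x_t) - \nabla f_i(\x_{s(i)})}^2 + \Norm{\nabla F(\x_t)}^2$-type quantities, which via $L$-smoothness reduce to sums of $\Norm{\x_t - \x_{s}}^2 \le (\sum_{r=s}^{t-1}\eta_r\Norm{\v_r})^2$ over a window of length $O(n)$ (the expected refresh time of a coordinate). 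With $\beta = 1/n$, unrolling the geometric recursion in $e_t$ gives $\sum_t \E\Norm{e_t}^2 \lesssim n\E\Norm{e_1}^2 + n L^2 \sum_t \eta_t^2\Norm{\v_t}^2 + (\text{SAG window terms})$, and the first-iteration batch $B_0$... wait, for finite-sum we actually compute $\v_1$ exactly or with the same $g$ initialization, so $\E\Norm{e_1}^2$ is small or zero.

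The main obstacle I anticipate is twofold. First, the adaptive learning rate makes $\eta_t$ correlated with $\Norm{\v_t}^2$, so I cannot naively pull $\eta_t$ out of expectations; the standard fix is to use the monotonicity of $\eta_t$ and a ``$\sum \eta_t \Norm{\v_t}^2$ versus $(\sum\Norm{\v_i}^2)^{1-\alpha}$'' algebraic lemma (an AdaGrad-type telescoping: $\sum_t \frac{\Norm{\v_t}^2}{(\sum_{i\le t}\Norm{\v_i}^2)^\alpha} \le \frac{1}{1-\alpha}(\sum_{i\le T}\Norm{\v_i}^2)^{1-\alpha}$), which converts the $L\eta_t^2\Norm{\v_t}^2$ and error sums into powers of $S_T := \sum_i\Norm{\v_i}^2$. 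Second, the SAG window terms couple $\Norm{\x_t - \x_s}^2$ across $O(n)$ steps; I would bound these by Cauchy--Schwarz as $n\sum_{r=t-O(n)}^{t}\eta_r^2\Norm{\v_r}^2$ and absorb them, at the cost of an extra factor $n$, into the same $\eta_t^2\Norm{\v_t}^2$ budget — this is exactly where the $n^{1/4}$ (rather than $n^{-1/4}$ or a log) enters. Putting it together, one obtains an inequality of the shape $\E[S_T^{1-\alpha}] \cdot(\text{something}) \lesssim \Delta_F + L^{\text{power}} + \ldots$ purely in terms of $S_T$; solving this self-bounding inequality for $\E[S_T]$ and then using $\E[\Norm{\nabla F(\x_\tau)}] \le \sqrt{\frac1T \sum_t \E\Norm{\nabla F(\x_t)}^2}$ together with $\eta_T \gtrsim n^{-(1-\alpha)/2} S_T^{-\alpha}$ yields the claimed $\mathcal{O}(n^{1/4}T^{-1/2}(\Delta_F^{1/(2(1-\alpha))} + L^{1/(2\alpha)}))$ bound. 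The choice $0<\alpha<1/3$ is what keeps all the exponents on $S_T$ on the correct side so the self-bounding step closes.
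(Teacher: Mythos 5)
Your overall skeleton (descent lemma, mean-zero SAG correction so that the $\beta$-term only contributes $\beta^2$ times a second moment, the AdaGrad-type summation lemma, and a final self-bounding inequality in $S_T=\sum_t\Norm{\v_t}^2$) matches the paper. The genuine gap is in the step you yourself flag as the heart of the matter: controlling the staleness term $\E\bigl[\Norm{g_t^{i_t}-\tfrac1n\sum_i g_t^i}^2\bigr]$. You propose to write $g_t^i=\nabla f_i(\x_{s(i)})$ and bound $\Norm{\x_t-\x_{s(i)}}^2$ over ``a window of length $O(n)$ (the expected refresh time)'' via Cauchy--Schwarz. This does not go through as stated: the age $t-s(i)$ is a random variable with geometric tails, unbounded support, and it is correlated with the path increments $\eta_r^2\Norm{\v_r}^2$ (both depend on the same index sequence), so you cannot replace it by a deterministic window of length $O(n)$, and the naive expectation of the product is exactly where either a $\log(nT)$ factor or uncontrolled correlations re-enter — i.e., the very term this theorem is designed to eliminate. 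The paper avoids the delay/window analysis entirely: it introduces the potential $\Phi_t=\tfrac1n\sum_{i=1}^n\eta_t\Norm{\nabla f_i(\x_t)-g_t^i}^2$, shows (using that a uniformly random coordinate is refreshed each step, plus Young's inequality with weights $1+2n$ and $1+\tfrac1{2n}$) the one-step contraction $\Phi_{t+1}\le(1-\tfrac1{2n})\Phi_t+3nL^2\eta_t^3\Norm{\v_t}^2$, and then sums to get $\sum_t\Phi_t\le\tfrac{6L^2}{\beta^2}\sum_t\eta_t^3\Norm{\v_t}^2$ (the full-batch initialization kills the $t=1$ term). That Lyapunov recursion is the missing idea in your plan.

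A secondary soft spot: your endgame keeps $\sum_t\eta_t\Norm{\nabla F(\x_t)}^2$ and then divides out a lower bound $\eta_t\gtrsim n^{-(1-\alpha)/2}S_T^{-\alpha}$ inside an expectation; since $\eta_t$ and the gradients are correlated, this ratio step needs extra work (H\"older or a case split) to close. The paper sidesteps it by using the lower half of Lemma~\ref{lem1} directly on the left-hand side, $\sum_t\eta_t\Norm{\v_t}^2\ge\bigl(\tfrac1{\sqrt n}\sum_t\Norm{\v_t}^2\bigr)^{1-\alpha}$, solving the self-bounding inequality for $\E\bigl[\tfrac1T\sum_t\Norm{\v_t}\bigr]$, and then separately bounding the unweighted error sum $\sum_t\E\Norm{\nabla F(\x_t)-\v_t}^2\le 14nL^2\sum_t\E[\eta_t^2\Norm{\v_t}^2]$ before combining with the triangle inequality $\Norm{\nabla F(\x_t)}\le\Norm{\v_t}+\Norm{\nabla F(\x_t)-\v_t}$. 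If you adopt the potential-function lemma and this two-piece combination, your outline becomes essentially the paper's proof.
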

\textbf{Remark:} 
Our result matches the lower bound for non-convex finite-sum problems~\cite{Fang2018SPIDERNN,pmlr-v139-li21a}, and makes an improvement over the existing adaptive method, i.e., AdaSpider~\cite{NEURIPS2022_94f625dc}. Specifically, the convergence rate of the AdaSpider algorithm is $\mathcal{O}\left(n^{1 / 4} T^{-1/2} \left(L^2 + \Delta_F \right) \cdot \log \left(1+n T L\right)\right)$, and our result is better than theirs when $\frac{1}{4} < \alpha < \frac{1}{3}$.

We can avoid storing past gradients by following the SVRG method~\citep{NIPS2013_ac1dd209,NIPS:2013:Zhang} to compute the full gradient periodically and incorporate it into STORM estimator. 
Instead of storing the past gradients as in SAG algorithm, we can avoid this storage cost by incorporating elements from the SVRG method. Specifically, we compute a full batch gradient at the first step and every $I$ iteration~(we set $I=n$):
\begin{align*}
    \nabla f(\x_\tau)= \frac{1}{n}\sum_{i=1}^{n} \nabla f_i(\x_{\tau}).
\end{align*}
For other iterations, we randomly select an index \(i_t\) from the set \(\{1, 2, \cdots, n\}\) and compute:
\begin{align}\label{equ:svrg}
    \v_t = (1-\beta) \v_{t-1} +  \nabla f_{i_t}(\x_{t}) -(1-\beta)  \nabla f_{i_t}(\x_{t-1}) -\beta\left( \nabla f_{i_t}(\x_{\tau}) - \nabla f(\x_{\tau})\right).
\end{align}
Note that the first three terms match the original STORM estimator, and the last term, inspired from SVRG, deals with the finite-sum structure. Compared with equation~(\ref{FS-v}) in Algorithm~\ref{alg:fs}, the difference is that we use $\left( \nabla f_{i_t}(\x_{\tau}) - \nabla f(\x_{\tau})\right)$ instead of $\left( g_t^{i_t} - \frac{1}{n} \sum_{i=1}^n g_t^i\right)$ in the last term.
The detailed procedure is outlined in Algorithm~\ref{alg:fs+}. This strategy maintains the same optimal rate, as stated below:
\begin{theorem}\label{T4} Under Assumptions \ref{ass:finite} and \ref{asm:stochastic4++}, our Algorithm~\ref{alg:fs+} guarantees that: 
\begin{align*}
   \E\left[ \Norm{\nabla F(\x_{\tau})} \right] \leq \mathcal{O}\left(\frac{n^{1/4}}{T^{1/2}} \left(\Delta_F^{\frac{1}{2(1-\alpha)}}+ L^{\frac{1}{2\alpha}}\right)\right).
\end{align*}
\end{theorem}
\textbf{Remark:} 
The obtained convergence rate is in the same order as the results in Theorem~\ref{thm:main_2}, and Algorithm~\ref{alg:fs+} does not require storing past gradients anymore.

\begin{algorithm}[!t]
	\caption{STORM for Finite-sum Optimization~(SVRG-type)}
	\label{alg:fs+}
	\begin{algorithmic}[1]
	\STATE {\bfseries Input:} time step $T$, initial point $\x_1$
        \FOR{time step $t = 1$ {\bfseries to} $T$}
        \IF{$t \mod I == 0$}
        \STATE {Set $t= \tau$ and compute $\nabla f(\x_\tau)= \frac{1}{n}\sum_{i=1}^{n} \nabla f_i(\x_{\tau})$}
        \ENDIF
        \STATE Sample $i_t$ randomly from $\{1,2,\cdots,n \}$
        \STATE Compute $\v_t$ according to equation~(\ref{equ:svrg})
		\STATE Update the decision variable: $\x_{t+1} = \x_t - \eta \v_t$
		\ENDFOR
	\STATE Select $\tau$ uniformly at random from $\{1, \ldots, T\}$
	\STATE Return $\x_\tau$
	\end{algorithmic}
\end{algorithm}

\section{Experiments}\label{EX}
In this section, we evaluate the performance of the proposed Ada-STORM method via numerical experiments on image classification tasks and language modeling tasks. In the experiments, we compare our method with STORM~\cite{cutkosky2019momentum}, STORM+~\cite{levy2021storm} and META-STORM~\cite{Liu2022METASTORMGF}, as well as SGD, Adam~\cite{kingma:adam} and AdaBelief~\cite{NEURIPS2020_d9d4f495}. 
We use the default implementation of SGD and Adam from Pytorch~\cite{NEURIPS2019_bdbca288}. For STORM+, we follow its original implementation\footnote{https://github.com/LIONS-EPFL/storm-plus-code}, and build STORM, META-STORM and our Ada-STORM based on it. When it comes to hyper-parameter tuning, we simply set $\alpha=0.3$ for our algorithm. For other methods, we either set the hyper-parameters as recommended in the original papers or tune them by grid search. For example, we search the learning rate of SGD, Adam and AdaBelief from the set $\{1e{-}5,1e{-}4, 1e{-}3, 1e{-}2, 1e{-}1\}$ and select the best one for each method. All the experiments are conducted on eight NVIDIA Tesla V100 GPUs.
\paragraph{Image classification task}
First, we conduct numerical experiments on multi-class image classification tasks to evaluate the performance of the proposed method. Specifically, we train ResNet18 and ResNet34 models~\cite{Resnet18} on the CIFAR-10 and CIFAR-100 datasets~\cite{Krizhevsky2009Cifar10} respectively. For all  optimizers, we set the batch size as 256 and train for 200 epochs. We plot the loss value and the accuracy against the epochs on the CIFAR-10 and CIFAR-100 datasets in Figure~\ref{fig:1} and Figure~\ref{fig:2}. It is observed that, for training loss and training accuracy, our Ada-STORM algorithm achieves comparable performance with respect to other methods, and it outperforms the others in terms of testing loss and thus obtains a better testing accuracy. 
\begin{figure*}[t]
	\centering
	\subfigure[Training loss]{
\includegraphics[width=0.23\textwidth]{./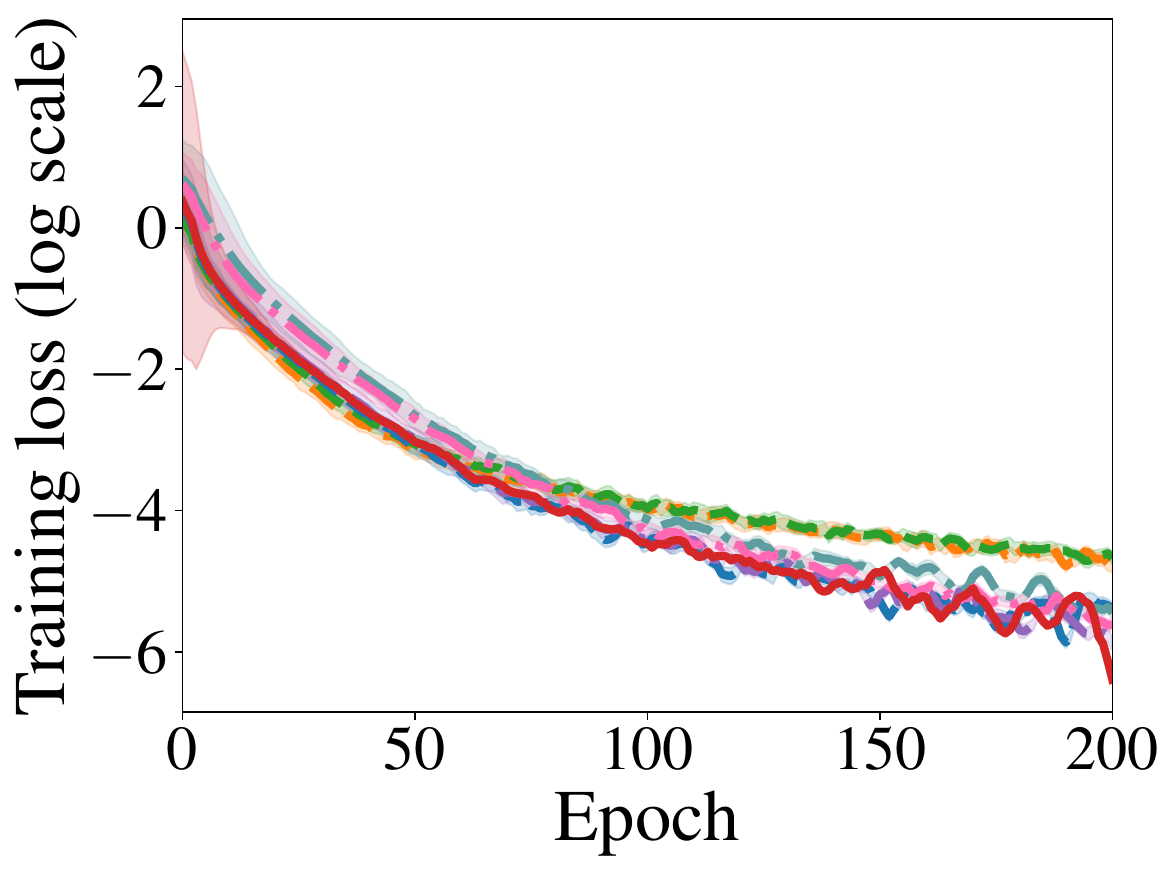}
	}	
	\subfigure[Training accuracy]{
\includegraphics[width=0.23\textwidth]{./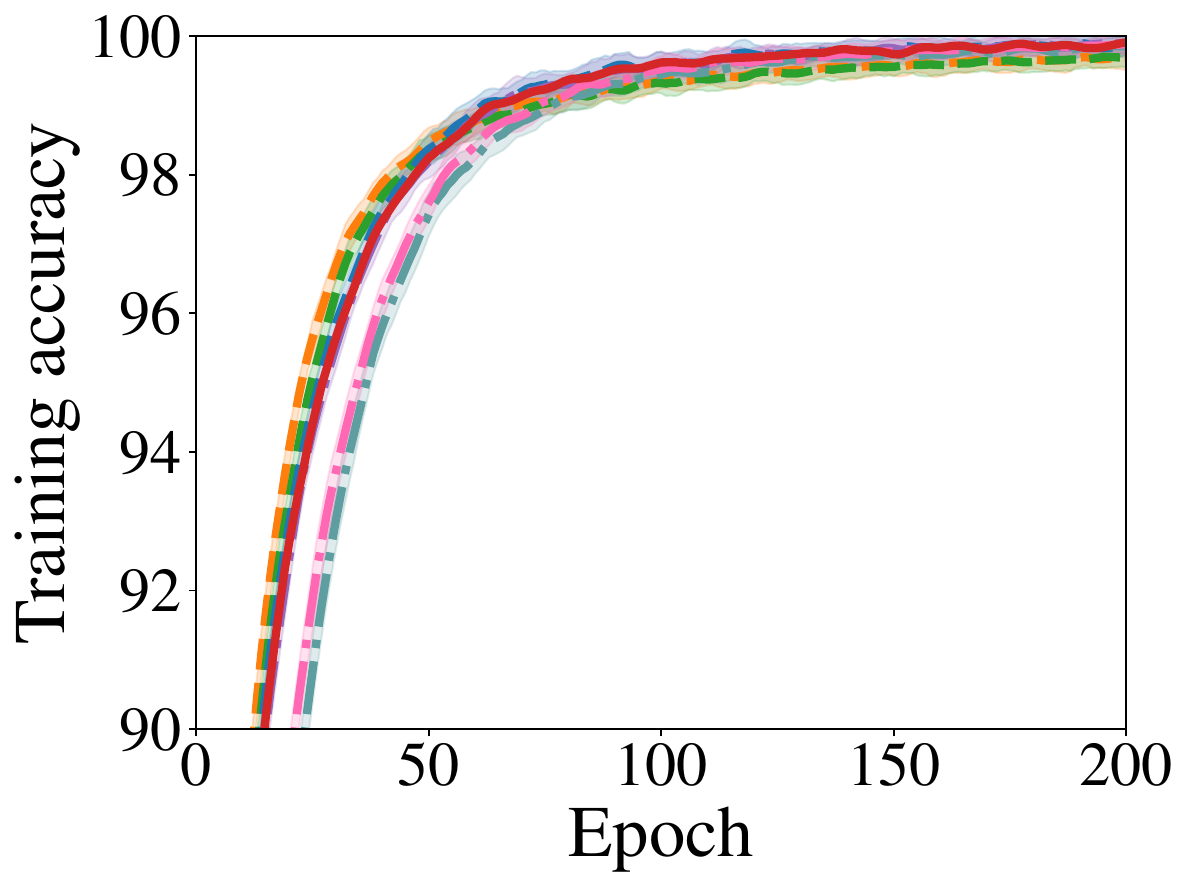}
	}
	\subfigure[Testing loss]{
\includegraphics[width=0.23\textwidth]{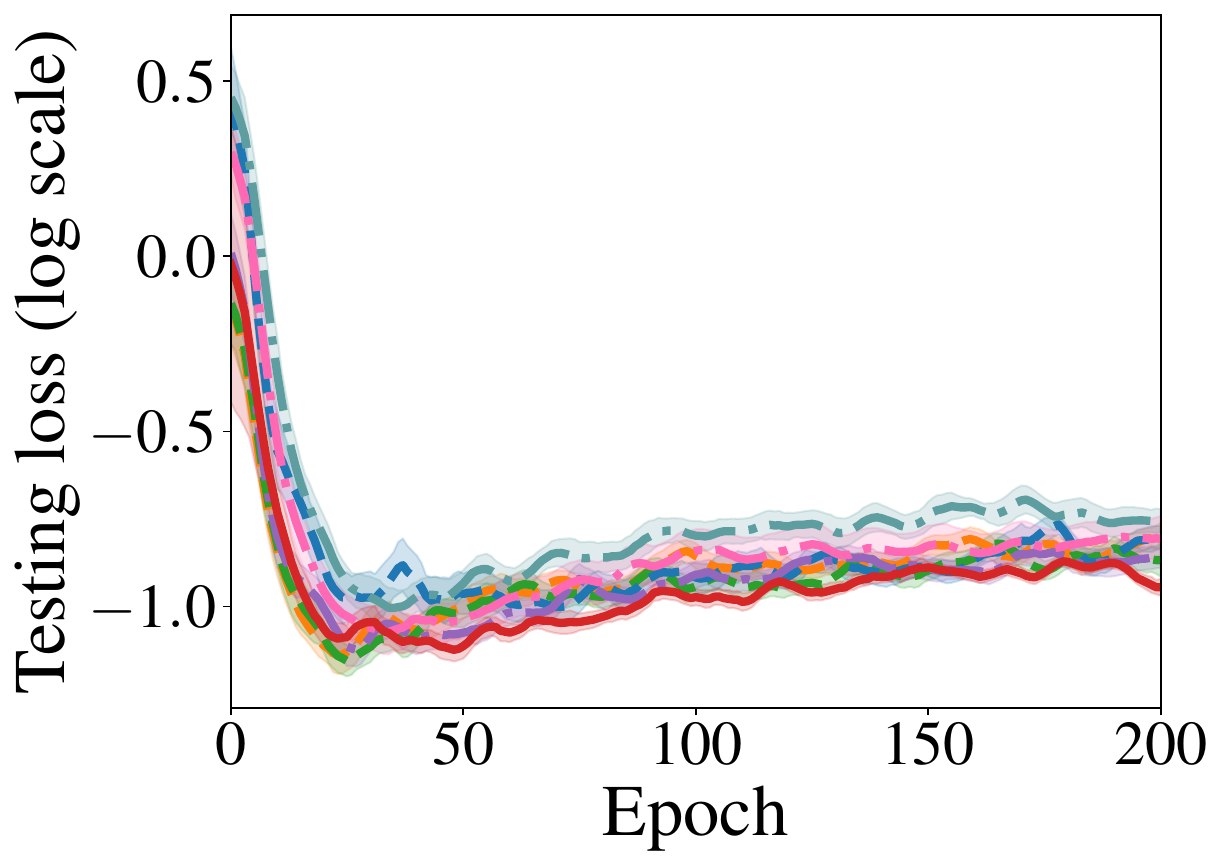}
	}
	\subfigure[Testing accuracy]{
\includegraphics[width=0.23\textwidth]{./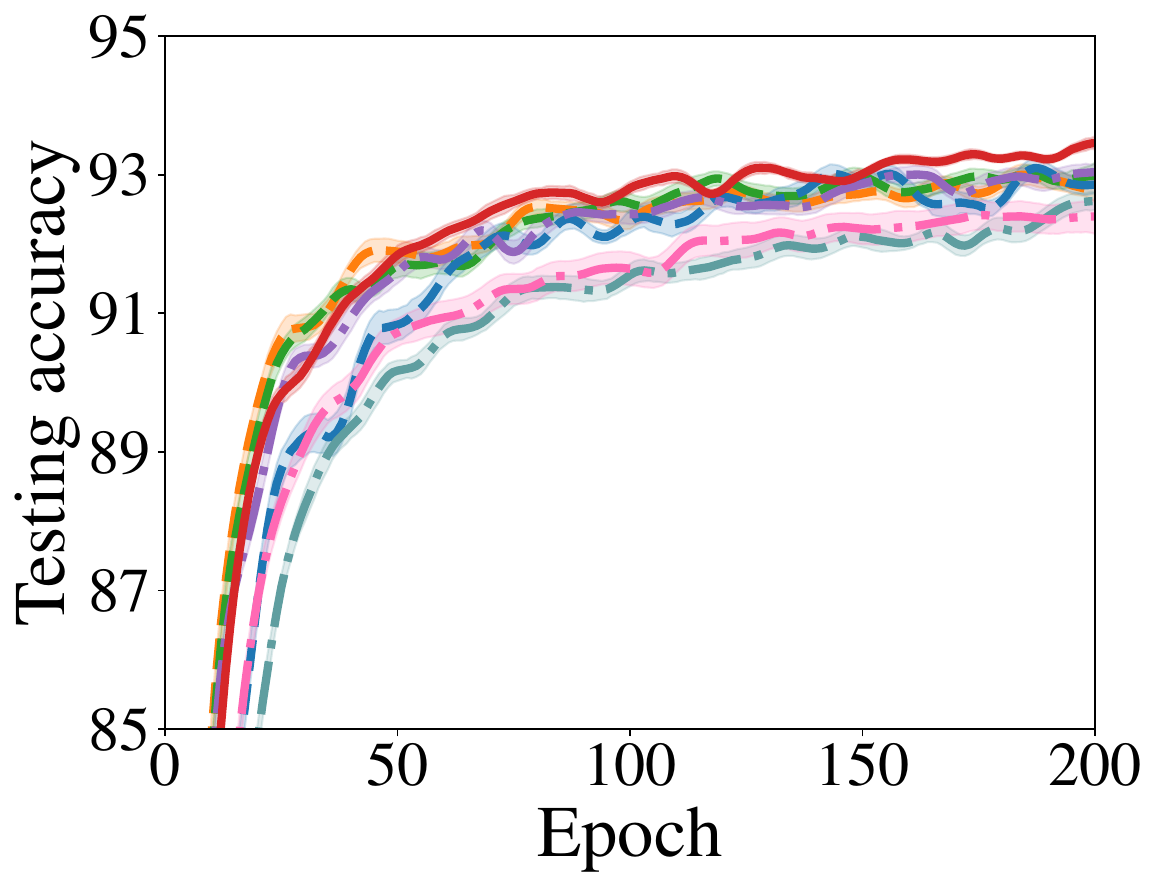}
	}
	\vskip -0.05in
 	\subfigure{
\includegraphics[width=0.99\textwidth]{./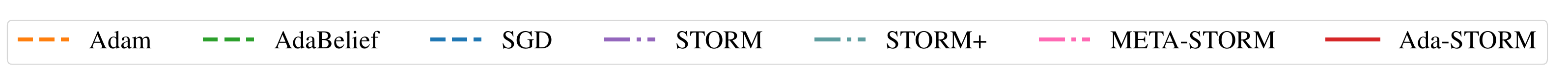}}
\vskip -0.05in
	\caption{Results for CIFAR-10 dataset.}
	\label{fig:1}
\end{figure*}
\begin{figure*}[t]
	\centering
	\subfigure[Training loss]{
\includegraphics[width=0.23\textwidth]{./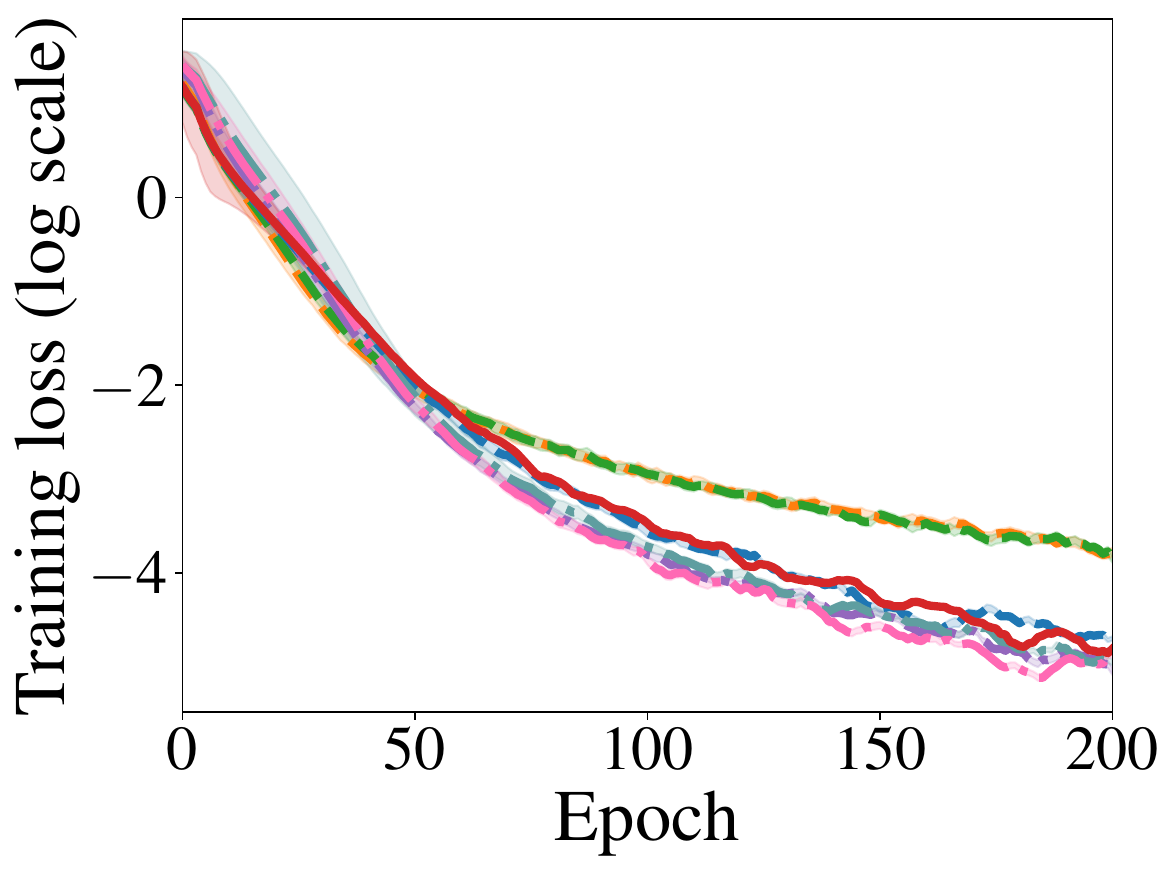}
	}	
	\subfigure[Training accuracy]{
\includegraphics[width=0.23\textwidth]{./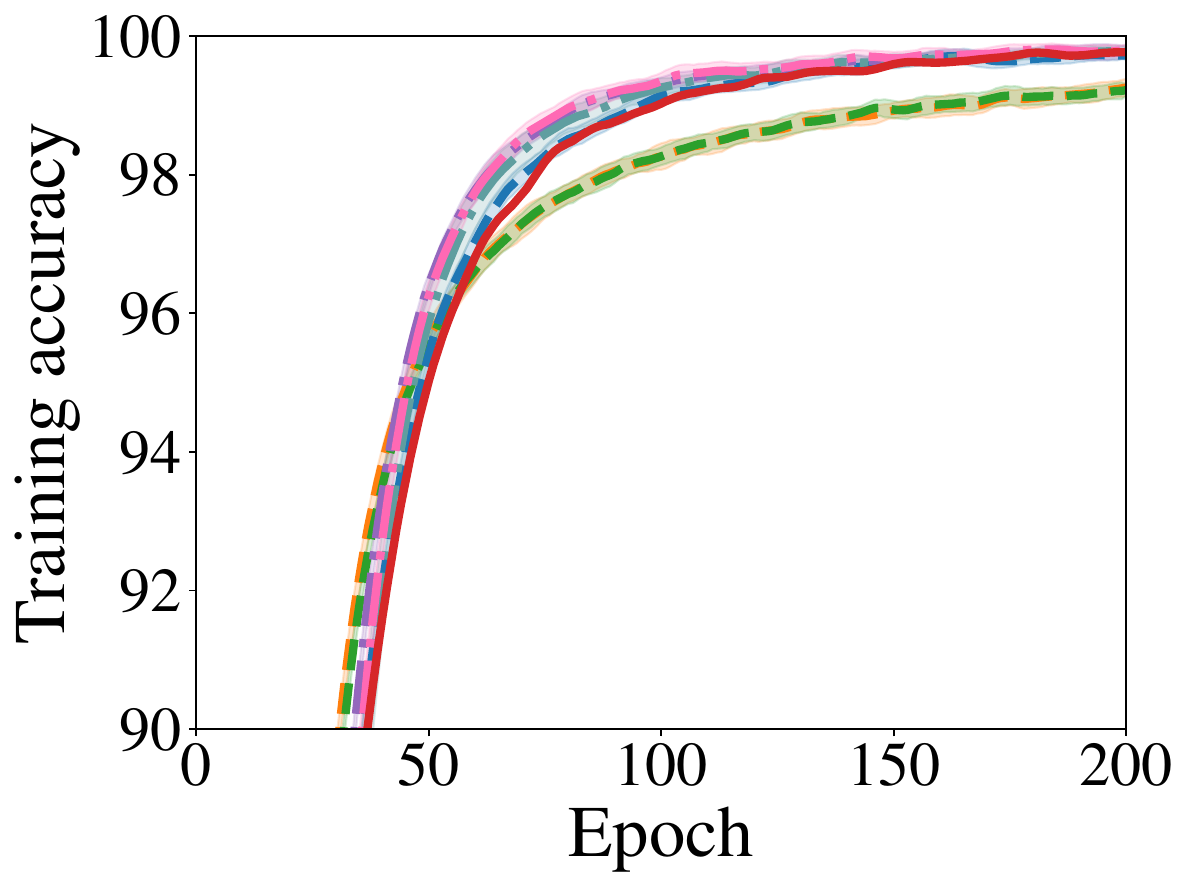}
	}
	\subfigure[Testing loss]{
\includegraphics[width=0.23\textwidth]{./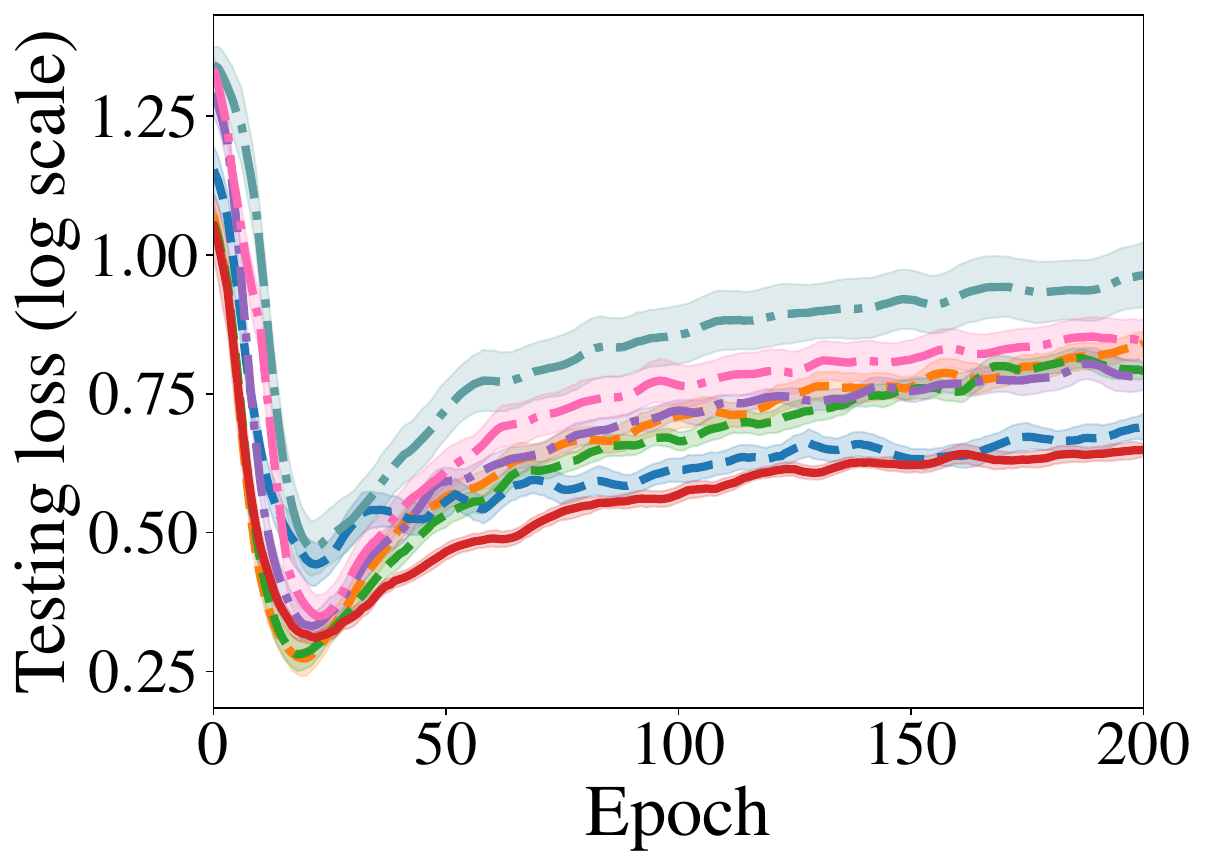}
	}
	\subfigure[Testing accuracy]{
\includegraphics[width=0.23\textwidth]{./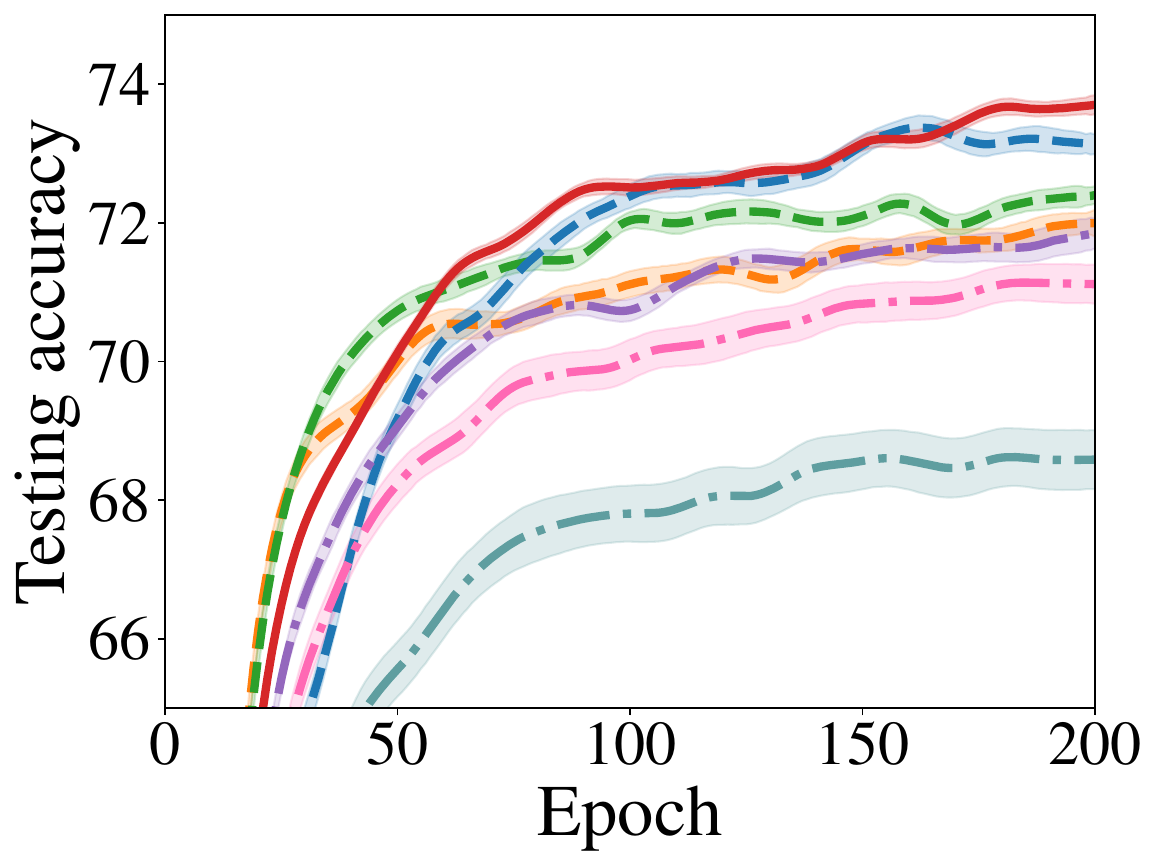}
	}
	\vskip -0.05in
 	\subfigure{
\includegraphics[width=0.99\textwidth]{./Fig/Legend.pdf}}
\vskip -0.05in
	\caption{Results for CIFAR-100 dataset.}
	\label{fig:2}
\end{figure*}

\paragraph{Language modeling task}
Then, we perform experiments on language modeling tasks. Concretely, we train a 2-layer Transformer~\citep{NIPS2017_3f5ee243} over the WiKi-Text2 dataset~\citep{wiki-text}. We use $256$ dimensional word embeddings, $512$ hidden unites and 2 heads. The batch size is set as $20$ and all methods are trained for 40 epochs with dropout rate $0.1$. We also clip the gradients by norm $0.25$ in case of the exploding gradient. We report both the loss and perplexity versus the number of epochs in Figure~\ref{fig:3}. From the results, we observe that our method converges more quickly than other methods and obtains a slightly better perplexity compared with others, indicating the effectiveness of the proposed method.

\begin{figure*}[!t]
	\centering
	\subfigure[Training loss]{
\includegraphics[width=0.23\textwidth]{./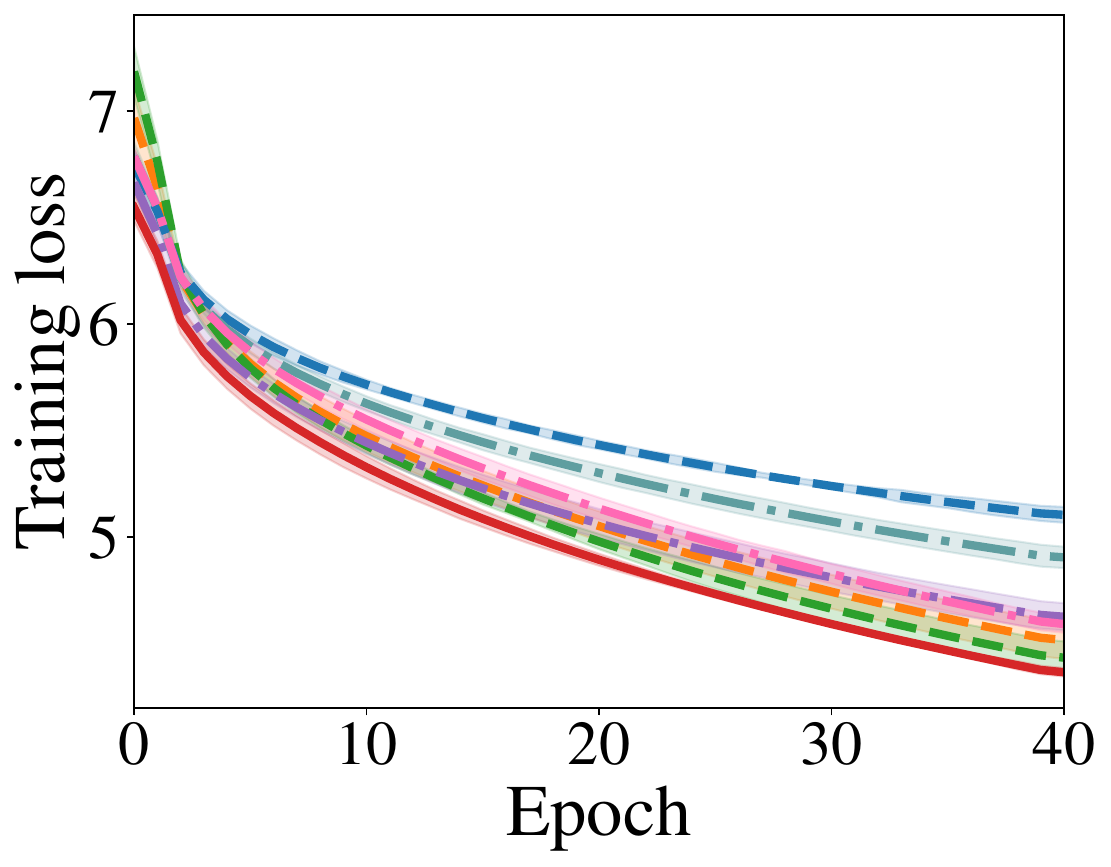}
	}	
	\subfigure[Training perplexity]{
\includegraphics[width=0.23\textwidth]{./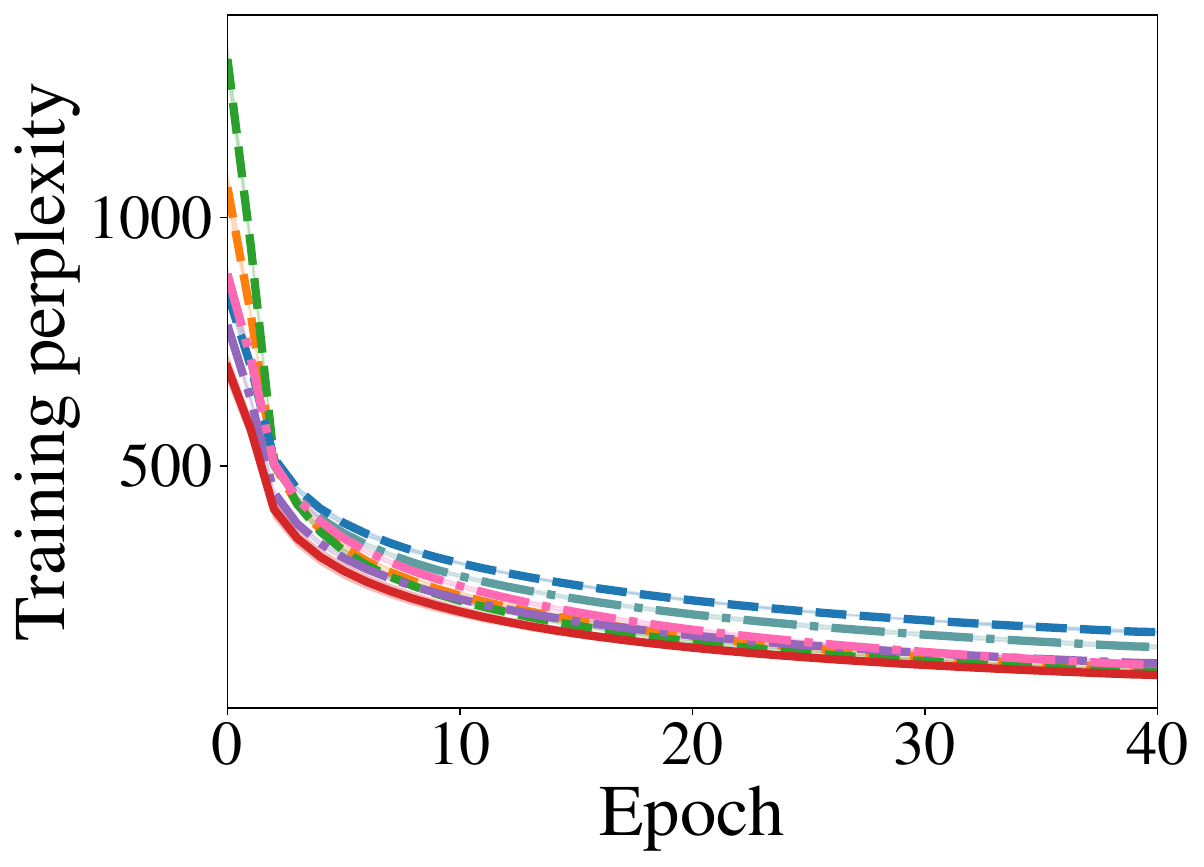}
	}
	\subfigure[Testing loss]{
\includegraphics[width=0.23\textwidth]{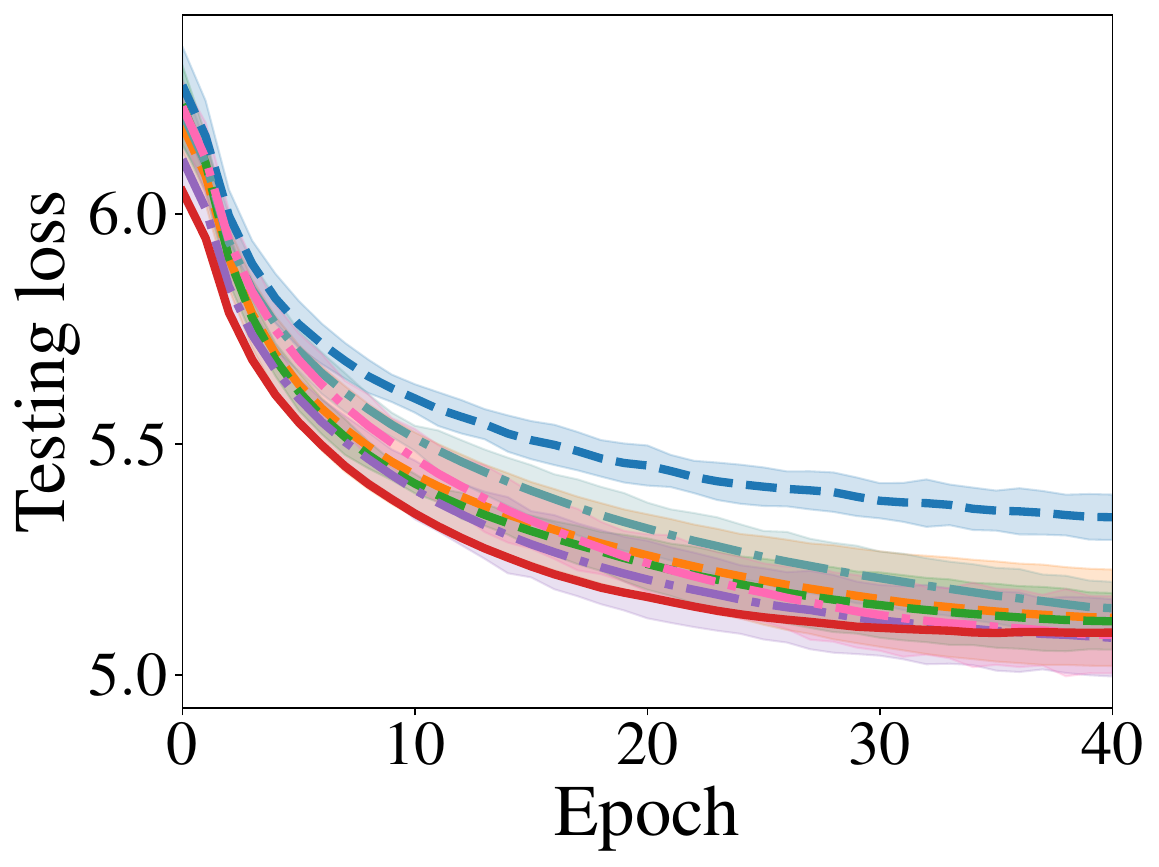}
	}
	\subfigure[Testing perplexity]{
\includegraphics[width=0.23\textwidth]{./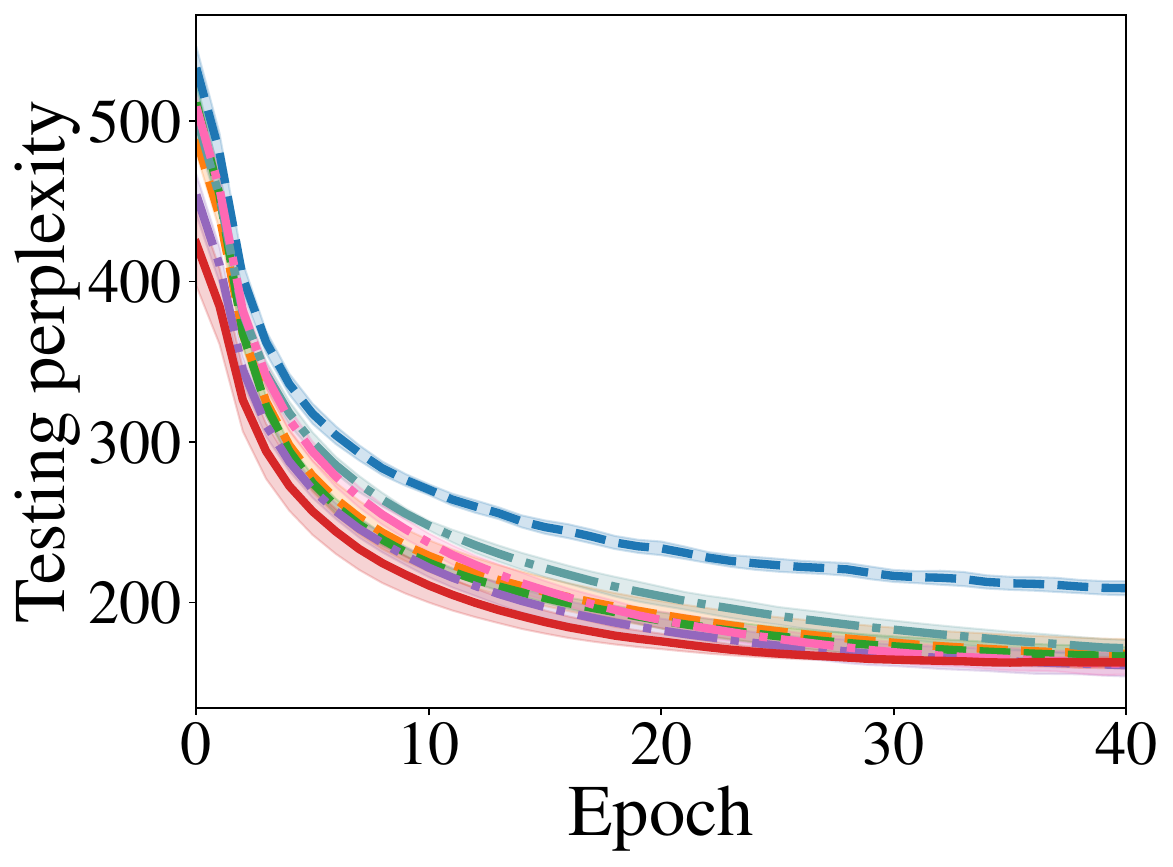}
	}
	\vskip -0.05in
 	\subfigure{
\includegraphics[width=0.99\textwidth]{./Fig/Legend.pdf}}
\vskip -0.05in
	\caption{Results for WikiText-2 dataset.}
	\label{fig:3}
\end{figure*}

\section{Conclusion}
In this paper, we propose an adaptive STORM method to achieve the optimal convergence rate for non-convex functions. Compared with existing methods, our algorithm requires weaker assumptions and does not have the additional $\mathcal {O}(\log T )$ term in the convergence rate. The proposed technique can also be employed to develop optimal adaptive algorithms for compositional optimization. Furthermore, we investigate an adaptive method for non-convex finite-sum optimization, obtaining an improved convergence rate of $\mathcal {O}(n^{1/4} T^{-1/2})$.  Given that STORM algorithm has already been used in many areas such as bi-level optimization~\cite{NEURIPS2021_71cc107d}, federated learning~\cite{das2022faster}, min-max optimization~\cite{NEURIPS2021_d994e372}, sign-based optimization~\cite{jiang2024efficient}, etc., the proposed methods may also inspire the development of adaptive algorithms in these fields.   

\section*{Acknowledgements}
This work was partially supported by National Key R\&D Program of China (2021ZD0112802), NSFC (62122037), and the Postgraduate Research \& Practice Innovation Program of Jiangsu Province (No. KYCX24\_0231).

\bibliography{ref}
\bibliographystyle{abbrvnat}

\newpage
\appendix
\section{Proof of Theorem~\ref{thm:main_0}}
First, we introduce the following lemma, which is frequently used in our proof. 
\begin{lemma}\label{lem1}
Suppose $c_i$ is positive for $i=\left\{1,2,\cdots, n\right\}$, and let $0 < \alpha < 1$. We can ensure that:
\begin{equation*}
\begin{aligned}\left(\sum_{i=1}^{n} c_i\right)^{1-\alpha} \leq \sum_{i=1}^{n}\frac{c_i}{\left(\sum_{j=1}^{i} c_j \right)^\alpha} &\leq \frac{1}{1-\alpha}\left(\sum_{i=1}^{n} c_i\right)^{1-\alpha}.
    \end{aligned}
\end{equation*}
\end{lemma}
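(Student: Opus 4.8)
The plan is to prove both inequalities by establishing them termwise after telescoping, using the monotonicity of the partial sums $S_i := \sum_{j=1}^i c_j$. Write the middle quantity as $\sum_{i=1}^n c_i / S_i^\alpha$. The strategy is to sandwich each summand $c_i / S_i^\alpha$ between two differences of the form $S_i^{1-\alpha} - S_{i-1}^{1-\alpha}$ (with the convention $S_0 = 0$), because summing such differences telescopes exactly to $S_n^{1-\alpha} = \left(\sum_{i=1}^n c_i\right)^{1-\alpha}$, which is the common quantity appearing on both ends.

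For the \emph{lower bound}, I would show $c_i / S_i^\alpha \geq S_i^{1-\alpha} - S_{i-1}^{1-\alpha}$ for each $i$. Since $c_i = S_i - S_{i-1}$, this is equivalent to $(S_i - S_{i-1})/S_i^\alpha \geq S_i^{1-\alpha} - S_{i-1}^{1-\alpha}$, i.e. $1 - S_{i-1}/S_i \geq 1 - (S_{i-1}/S_i)^{1-\alpha}$ after dividing by $S_i^{1-\alpha}$; writing $r = S_{i-1}/S_i \in [0,1]$ this reads $1 - r \geq 1 - r^{1-\alpha}$, i.e. $r^{1-\alpha} \geq r$, which holds since $0 \le r \le 1$ and $1-\alpha \le 1$. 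Summing over $i$ and telescoping gives $\sum_{i=1}^n c_i/S_i^\alpha \geq S_n^{1-\alpha}$. For the \emph{upper bound}, I would instead compare against the mean value / convexity estimate: since $x \mapsto x^{1-\alpha}$ is concave, $S_i^{1-\alpha} - S_{i-1}^{1-\alpha} \geq (1-\alpha) S_i^{-\alpha}(S_i - S_{i-1}) = (1-\alpha) c_i / S_i^\alpha$ (the derivative at the right endpoint underestimates the secant slope for a concave function). Rearranging, $c_i / S_i^\alpha \leq \frac{1}{1-\alpha}\left(S_i^{1-\alpha} - S_{i-1}^{1-\alpha}\right)$, and summing and telescoping yields $\sum_{i=1}^n c_i / S_i^\alpha \leq \frac{1}{1-\alpha} S_n^{1-\alpha}$.

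The main obstacle, such as it is, is getting the two one-variable inequalities $r^{1-\alpha} \ge r$ on $[0,1]$ and the concavity/secant comparison $S_i^{1-\alpha}-S_{i-1}^{1-\alpha}\ge(1-\alpha)S_i^{-\alpha}c_i$ cleanly stated, plus handling the boundary term $S_0 = 0$ (here $S_1^{1-\alpha} - S_0^{1-\alpha} = S_1^{1-\alpha}$ causes no trouble since $1-\alpha > 0$). Alternatively the upper bound can be derived by an integral comparison: $c_i / S_i^\alpha \le \int_{S_{i-1}}^{S_i} x^{-\alpha}\,dx = \frac{1}{1-\alpha}(S_i^{1-\alpha} - S_{i-1}^{1-\alpha})$ since $x^{-\alpha}$ is decreasing, and similarly the lower bound follows from $c_i/S_i^\alpha \ge$ a comparable integral estimate after noting $\int_{S_{i-1}}^{S_i} x^{-\alpha}dx \ge \int_0^{S_i}x^{-\alpha}dx - \int_0^{S_{i-1}}x^{-\alpha}dx$ telescopes — but the termwise-secant argument above is the most self-contained. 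No deep ingredient is needed; this is a standard "integral test" style estimate, and the proof is essentially two short telescoping sums.
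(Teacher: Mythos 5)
Your proof is correct, and it takes a somewhat different route from the paper's. For the upper bound, the paper follows \citet{McMahan2010AdaptiveBO} and argues by induction on $n$: in the inductive step it optimizes the one-variable function $h(X)=\frac{1}{1-\alpha}(Z-X)^{1-\alpha}+X/Z^{\alpha}$ and shows $h(X)\le h(0)$, which is exactly the same per-term inequality $c_i/S_i^{\alpha}\le\frac{1}{1-\alpha}\bigl(S_i^{1-\alpha}-S_{i-1}^{1-\alpha}\bigr)$ that you obtain directly from concavity of $x\mapsto x^{1-\alpha}$ (or the integral comparison), so the analytic content is the same but your telescoping packaging removes the induction and is a bit more self-contained. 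For the lower bound the two arguments genuinely differ: the paper simply bounds every denominator $\bigl(\sum_{j=1}^{i}c_j\bigr)^{\alpha}$ by the full sum $\bigl(\sum_{j=1}^{n}c_j\bigr)^{\alpha}$, which is a one-line estimate, whereas you prove the sharper termwise bound $c_i/S_i^{\alpha}\ge S_i^{1-\alpha}-S_{i-1}^{1-\alpha}$ via $r^{1-\alpha}\ge r$ on $[0,1]$ and telescope; your version costs a few extra lines but gives a per-term statement, while the paper's is quicker. One small caveat: your parenthetical alternative derivation of the lower bound by integral comparison is stated loosely (the displayed ``inequality'' between the integrals is in fact an equality and does not by itself yield the bound), but since you designate the secant/telescoping argument as the actual proof, this does not affect correctness.
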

\begin{proof}
The proof mainly follows~\citet{McMahan2010AdaptiveBO} and a similar analysis also appears in~\citet{levy2021storm}. 
First, we prove the right part by Induction. 

(1) For $n = 1$, we can easily show that the hypothesis holds:
\begin{align*}
    \frac{c_1}{c_1^\alpha} = c_1^{1-\alpha} \leq \frac{1}{1-\alpha}c_1^{1-\alpha}.
\end{align*}
        
(2) Next, assuming that the hypothesis holds for $n=t-1$, then we show that it also holds for $n=t$. Define $Z = \sum_{i=1}^{t}c_i $ and $X = c_t$. For $n=t$, we have
        \begin{align*}
            \sum_{i=1}^{t}\frac{c_i}{\left(\sum_{j=1}^{i} c_j \right)^\alpha} & =\sum_{i=1}^{t-1}\frac{c_i}{\left(\sum_{j=1}^{i} c_j \right)^\alpha} + \frac{c_t}{\left(\sum_{j=1}^{t} c_j \right)^\alpha}\\
            & \leq \frac{1}{1-\alpha}\left(\sum_{i=1}^{t-1} c_i\right)^{1-\alpha} + \frac{c_t}{\left(\sum_{j=1}^{t} c_j \right)^\alpha}\\
            & = \frac{1}{1-\alpha}(Z-X)^{1-\alpha} + \frac{X}{Z^\alpha} \coloneqq h(X).
        \end{align*}
    Taking the derivative concerning $x$, we know that
        \begin{align*}
        \frac{d h(X)}{d X}=\frac{1}{Z^{\alpha}}-\frac{1}{(Z-X)^{\alpha}},
        \end{align*}
        which indicates that $h(X)$ decreases as $X$ increasing. Since $0 \leq X \leq Z$,
\begin{align*}
    \max _{0 \leq X \leq Z} h(X)=h(0)=\frac{1}{1-\alpha} Z^{1-\alpha}=\frac{1}{1-\alpha}\left(\sum_{i=1}^{t} c_{i}\right)^{1-\alpha},
\end{align*}
        which implies that the hypothesis is true for $n=t$.

Combining (1) and (2), we finish the proof for the right part. Then, we give the proof of the left part as follows:
 \begin{align*}
            \sum_{i=1}^{n}\frac{c_i}{\left(\sum_{j=1}^{i} c_j \right)^\alpha} \geq \sum_{i=1}^{n}\frac{c_i}{\left(\sum_{j=1}^{n} c_j \right)^\alpha} = \left(\sum_{i=1}^{n} c_i\right)^{1-\alpha}.
        \end{align*}
Thus, we finish the proof of this lemma.
\end{proof}
Next, we can  obtain the following guarantee for our algorithm.
\begin{lemma}\label{lem2} Our method enjoys the following guarantee:
     \begin{align*}
     &\sum_{t=1}^T \E \left[{\eta_t}\Norm{\v_t}^2 \right] \\ \leq &\left(2\Delta_f + \sigma^2\right)    +\E\underbrace{\left[\frac{2L^2}{\beta} \sum_{t=1}^T \eta_t^3 \Norm{\v_t}^2\right]}\limits_{\rA}  +\E\underbrace{\left[L \sum_{t=1}^T \eta_t^2\Norm{\v_t}^2\right] }\limits_{\rB} +\E\underbrace{\left[2\beta \sigma^2 \sum_{t=1}^T \eta_t\right]}\limits_{\rC}.
     \end{align*}
 \end{lemma}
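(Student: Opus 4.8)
\textbf{Proof proposal for Lemma~\ref{lem2}.}
The plan is to start from the $L$-smoothness descent inequality and carefully track the gradient estimation error through the STORM recursion. First I would apply the smoothness of $f$ (implied by Assumption~\ref{ass1}) along the update $\x_{t+1}=\x_t-\eta_t\v_t$ to get
\begin{align*}
    f(\x_{t+1}) \leq f(\x_t) - \eta_t \langle \nabla f(\x_t), \v_t\rangle + \frac{L}{2}\eta_t^2\Norm{\v_t}^2.
\end{align*}
The cross term is the crux: I would rewrite $-\langle \nabla f(\x_t),\v_t\rangle = -\Norm{\v_t}^2 + \langle \v_t - \nabla f(\x_t), \v_t\rangle$ and bound the inner product, e.g.\ via Young's inequality, by something like $\frac12\Norm{\v_t}^2 + \frac12\Norm{\v_t-\nabla f(\x_t)}^2$, so that after summing and telescoping $f(\x_1)-f_*\le \Delta_f$ one is left needing to control $\sum_t \eta_t \E[\Norm{\v_t-\nabla f(\x_t)}^2]$, plus the $\frac{L}{2}\sum_t \eta_t^2\Norm{\v_t}^2$ term which is essentially term $\rB$.

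The main work — and the step I expect to be the principal obstacle — is estimating the error $\E[\Norm{\v_t-\nabla f(\x_t)}^2]$ via the STORM recursion \eqref{STORM-v}. Writing $\epsilon_t = \v_t - \nabla f(\x_t)$, the standard STORM decomposition gives a recursion of the form $\E[\Norm{\epsilon_t}^2] \le (1-\beta)^2\E[\Norm{\epsilon_{t-1}}^2] + 2(1-\beta)^2 L^2 \E[\Norm{\x_t-\x_{t-1}}^2] + 2\beta^2\sigma^2$, where the martingale cross terms vanish in expectation (using $\E[\nabla f(\x_t;\xi_t)-\nabla f(\x_{t-1};\xi_t)\mid \mathcal{F}_{t-1}] = \nabla f(\x_t)-\nabla f(\x_{t-1})$ and Assumption~\ref{ass1} for the error-correction term, Assumption~\ref{ass2} for the noise term). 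Since $\x_t-\x_{t-1} = -\eta_{t-1}\v_{t-1}$, the middle term becomes $2(1-\beta)^2 L^2 \eta_{t-1}^2\Norm{\v_{t-1}}^2$. Unrolling this geometric-type recursion with ratio $(1-\beta)^2 \le 1-\beta$, and handling the base case $\v_1$ (the large initial batch $B_0=T^{1/3}$ makes $\E[\Norm{\epsilon_1}^2]\le \sigma^2/B_0$, small enough to absorb), I would obtain
\begin{align*}
    \sum_{t=1}^T \eta_t \E[\Norm{\epsilon_t}^2] \lesssim \sigma^2 + \frac{L^2}{\beta}\sum_{t=1}^T \eta_t^3\Norm{\v_t}^2 + \beta\sigma^2\sum_{t=1}^T \eta_t,
\end{align*}
where the $1/\beta$ factor comes from summing the geometric series $\sum_{s\le t}(1-\beta)^{t-s}$ and crucially one uses that $\eta_t$ is non-increasing so that $\eta_t \le \eta_s$ for $s\le t$ can be pulled inside. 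This produces exactly terms $\rA$ and $\rC$.

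Finally I would assemble the pieces: combining the telescoped descent inequality, the bound on the summed estimation error, and collecting constants, the $\frac12\Norm{\v_t}^2$ surplus from Young cancels against the $-\Norm{\v_t}^2$ to leave $\frac12\sum_t\eta_t\Norm{\v_t}^2$ on the left (up to the stated factor-of-two normalization), while the right-hand side collects into $(2\Delta_f+\sigma^2) + \rA + \rB + \rC$. The delicate points to get right are the exact constants in the Young step, ensuring the martingale/conditional-expectation arguments are valid (the samples $\xi_t$ are fresh at step $t$), verifying the initial-batch term is dominated, and exploiting monotonicity of $\eta_t$ — which holds here since $\eta_t$ in \eqref{main:eq} is the min of a constant and a non-increasing function of the growing sum $\sum_{i\le t}\Norm{\v_i}^2$. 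None of these is conceptually hard, but the bookkeeping on constants is where errors would creep in.
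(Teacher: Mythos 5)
Your proposal is correct and follows essentially the same route as the paper: the smoothness descent step (your Young bound on the cross term is equivalent to the paper's exact expansion of $-\eta_t\langle\nabla f(\x_t),\v_t\rangle$ after dropping $-\tfrac{\eta_t}{2}\Norm{\nabla f(\x_t)}^2$), the STORM error recursion with $(1-\beta)^2\le 1-\beta$, the $\sigma^2$ absorption of the initial batch term via $\eta_1\le T^{-1/3}$, $\beta=T^{-2/3}$, and the $1/\beta$ factor obtained from the weighted sum using monotonicity and $\H_t$-measurability of $\eta_t$. The only cosmetic difference is that you unroll the geometric recursion while the paper divides by $\beta$ and telescopes the weighted errors directly; both yield exactly the stated terms $\rA$, $\rB$, $\rC$ with the same constants.
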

\begin{proof}
 According to the definition of estimator $\v_t$, we can deduce that:\

\begin{equation}\label{lemma 6+}
    \begin{aligned}
        &\quad\ \E_{\xi_{t+1}}\left[\Norm{\nabla f(\x_{t+1}) -\v_{t+1}}^2\right]\\
        & = \E_{\xi_{t+1}}\left[\Norm{(1-\beta)\v_{t} +\nabla f(\x_{t+1};\xi_{t+1}) -  (1-\beta)\nabla f(\x_{t};\xi_{t+1}) - \nabla f(\x_{t+1})}^2\right]\\
        & = \E_{\xi_{t+1}}\left[\|(1-\beta)(\v_{t} - \nabla f(\x_{t})) + \left(\nabla f(\x_{t})-\nabla f(\x_{t+1}) + \nabla f(\x_{t+1};\xi_{t+1}) - \nabla f(\x_{t};\xi_{t+1}) \right) \right. \\
        & \qquad \left.  + \beta\left(\nabla f(\x_{t};\xi_{t+1}) - \nabla f(\x_{t})  \right) \|^2 \right]\\
          & \leq  \mathbb{E}_{\xi_{t+1}}\left[(1-\beta)^2||\mathbf{v}_{t} - \nabla f(\mathbf{x}_{t})||^2\right]  +\mathbb{E}_{\xi_{t+1}}\left[|| \nabla f(\mathbf{x}_{t})-\nabla f(\mathbf{x}_{t+1}) \right. \\
           & \qquad  \left. + \nabla f(\mathbf{x}_{t+1};\xi_{t+1}) - \nabla f(\mathbf{x}_{t};\xi_{t+1}) + \beta\left(\nabla f(\mathbf{x}_{t};\xi_{t+1}) - \nabla f(\mathbf{x}_{t})  \right) ||^2 \right] \\
        & \leq  \mathbb{E}_{\xi_{t+1}}\left[(1-\beta)^2||\mathbf{v}_{t} - \nabla f(\mathbf{x}_{t})||^2\right] \\
        &\quad +2 \mathbb{E}_{\xi_{t+1}}\left[|| \nabla f(\mathbf{x}_{t})-\nabla f(\mathbf{x}_{t+1}) + \nabla f(\mathbf{x}_{t+1};\xi_{t+1}) - \nabla f(\mathbf{x}_{t};\xi_{t+1}) ||^2\right] \\
        & \quad  + 2\mathbb{E}_{\xi_{t+1}}\left[|| \beta\left(\nabla f(\mathbf{x}_{t};\xi_{t+1}) - \nabla f(\mathbf{x}_{t})  \right)||^2\right] \\
        & \leq (1-\beta)^2 \E_{\xi_{t+1}}\left[\|\v_{t} - \nabla f(\x_{t})\|^2\right] + 2\beta^2 \E_{\xi_{t+1}}\left[\|\nabla f(\x_{t};\xi_{t+1}) - \nabla f(\x_{t})\|^2\right] \\
        & \qquad +2 \E_{\xi_{t+1}}\left[\|\nabla f(\x_{t+1};\xi_{t+1}) - \nabla f(\x_{t};\xi_{t+1}) \|^2\right]\\
        & \leq  (1-\beta)\|\v_{t} - \nabla f(\x_{t})\|^2  + 2\beta^2\sigma^2 + 2L^2 \|\x_{t+1} - \x_{t} \|^2\\
        & = (1-\beta)\|\v_{t} - \nabla f(\x_{t})\|^2\  + 2\beta^2\sigma^2 + 2L^2 \eta_{t}^2\|\v_{t} \|^2 .
    \end{aligned}
\end{equation}

Note that $\eta_t$ is independent of random variable $\xi_{t+1}$. So we can guarantee that:
    \begin{align*}
    \E_{\xi_{t+1}}\left[\eta_t\Norm{\nabla f(\x_{t+1}) -\v_{t+1}}^2\right] \leq (1-\beta)\eta_t\|\v_{t} - \nabla f(\x_{t})\|^2\  + 2\beta^2\sigma^2 \eta_t+ 2L^2 \eta_{t}^3\|\v_{t} \|^2.
    \end{align*}
After rearranging, we have:
    \begin{align*}
    &\eta_t\Norm{\v_t - \nabla f(\x_{t})}^2
       \\ \leq& \frac{\eta_t}{\beta}\Norm{\v_t - \nabla f(\x_{t})}^2   - \E_{\xi_{t+1}}\left[\frac{\eta_t}{\beta}\Norm{\v_{t+1} - \nabla f(\x_{t+1})}^2 \right]+  2\beta   \sigma^2 \eta_t+ \frac{2L^2}{\beta}\eta_t^3 \Norm{\v_t}^2.
    \end{align*}
Letting $\H_t$ be the history to time $t$, i.e., $\H_t = \{\xi_1,\cdots,\xi_t \}$, we ensure that
    \begin{align*}
    \E_{\H_t}\left[\eta_t\Norm{\v_t - \nabla f(\x_{t})}^2\right]
        \leq &\E_{\H_t}\left[\frac{\eta_t}{\beta}\Norm{\v_t - \nabla f(\x_{t})}^2\right]   - \E_{\H_{t+1}}\left[\frac{\eta_t}{\beta}\Norm{\v_{t+1} - \nabla f(\x_{t+1})}^2 \right]\\
        &+  2\beta   \sigma^2 \E_{\H_t}\left[\eta_t\right]+ \frac{2L^2}{\beta}\E_{\H_t}\left[\eta_t^3 \Norm{\v_t}^2\right].
    \end{align*}
    By summing up and noting that $\eta_t$ is non-increasing such that $\eta_{t+1} \leq \eta_t$, we have:
    \begin{align}\label{LL4}
        \begin{split}
            &\sum_{t=1}^{T}\E_{\H_t}\left[\eta_t\Norm{\v_t - \nabla f(\x_{t})}^2\right] \\
        \leq  &\E_{\H_1}\left[\frac{\eta_1}{\beta}\Norm{\v_1 - \nabla f(\x_{1})}^2\right] 
        +  2\beta \sigma^2 \sum_{t=1}^T \E_{\H_t}\left[\eta_t \right]  + \frac{2L^2}{\beta}\sum_{t=1}^T\E_{\H_t}\left[ \eta_t^3\Norm{\v_t}^2 \right].
        \end{split}
    \end{align}
Since we use a large batch size in the first iteration, that is, $B_0 = T^{1/3}$, we can now ensure that   $\E_{\H_1}\left[\Norm{\v_1 - \nabla f(\x_{1})}^2\right] \leq \frac{\sigma^2}{B_0} = \frac{\sigma^2}{T^{1/3}}$. Due to the fact that $\eta_1 \leq T^{-1/3}$ and $\beta = T^{-2/3}$, the first term of the above inequality is less than $\sigma^2$. So, we can finally have:
    \begin{align}\label{144}
        \sum_{t=1}^{T}\E\left[\eta_t\Norm{\v_t - \nabla f(\x_{t})}^2\right]
        \leq  \sigma^2
        +  2\beta \sigma^2 \sum_{t=1}^T \E\left[\eta_t \right]  + \frac{2L^2}{\beta}\sum_{t=1}^T\E\left[ \eta_t^3\Norm{\v_t}^2 \right].
    \end{align}
Also, due to the smoothness of $f (\x)$, we  know that:
\begin{align*}
    f(\x_{t+1}) & \leq f(\x_{t}) +  \left\langle\nabla f\left(\mathbf{x}_{t}\right), \mathbf{x}_{t+1}-\mathbf{x}_{t}\right\rangle+\frac{L}{2}\left\|\mathbf{x}_{t+1}-\mathbf{x}_{t}\right\|^{2}\\
    & = f\left(\mathbf{x}_{t}\right)-\eta_{t}\left\langle\nabla f\left(\mathbf{x}_{t}\right), \mathbf{v}_{t}\right\rangle+\frac{\eta_{t}^{2} L}{2}\left\|\mathbf{v}_{t}\right\|^{2}\\
    & = f\left(\mathbf{x}_{t}\right)-\eta_{t}\left\langle\nabla f\left(\mathbf{x}_{t}\right), \mathbf{v}_{t}\right\rangle+\frac{\eta_{t}}{2}\left\|\nabla f\left(\mathbf{x}_{t}\right)\right\|^{2}+\frac{\eta_{t}}{2}\left\|\mathbf{v}_{t}\right\|^{2}-\frac{\eta_{t}}{2}\left\|\nabla f\left(\mathbf{x}_{t}\right)\right\|^{2}\\
    &\quad\quad -\frac{\eta_{t}}{2}\left\|\mathbf{v}_{t}\right\|^{2}+\frac{\eta_{t}^{2} L}{2}\left\|\mathbf{v}_{t}\right\|^{2}\\
    &= f(\x_t) +\frac{\eta_t}{2} \Norm{\nabla f(\x_t) -\v_t}^2 - \frac{\eta_t}{2} \Norm{\nabla f(\x_t)}^2 -\frac{\eta_t}{2}\Norm{\v_t}^2 +\frac{\eta_t^2 L}{2}\Norm{\v_t}^2.
\end{align*}
By summing up and re-arranging, we have:
\begin{align}\label{smooth}
\begin{split}
        \sum_{t=1}^T {\eta_t}\Norm{\v_t}^2  \leq 2f(\x_1) -2f(\x_{T+1})+ \sum_{t=1}^T {\eta_t}\Norm{\nabla f(\x_t) -\v_t}^2   +\sum_{t=1}^T{\eta_t^2 L} \Norm{\v_t}^2.
\end{split}
\end{align}
Then, by using equation~(\ref{144}) and the fact that $f(\x_1) - f_{*} \leq \Delta_f$, we have:
\begin{align*}
    \sum_{t=1}^T \E \left[{\eta_t}\Norm{\v_t}^2 \right] \leq 2\Delta_f + \sigma^2 +2\beta \sigma^2  \sum_{t=1}^T \E\left[\eta_t\right] + 2L^2 \sum_{t=1}^T   \E \left[\frac{\eta_t^3}{\beta}\Norm{\v_t}^2 \right]  +L\sum_{t=1}^T \E \left[\eta_t^2  \Norm{\v_t}^2 \right],
    \end{align*}
which finishes the proof of Lemma~\ref{lem2}.
\end{proof}
To effectively bound each term in the above lemma, we divide the algorithm into two stages. Suppose that starting from iteration $t=s$, the condition $\sum_{i=1}^{t} \Norm{\v_i}^2 \geq T^{1/3}$ begins to hold. We refer to iterations $t=\{1,2,\cdots,s-1\}$ as the first stage, and $t= \{s,\cdots, T\}$ as the second stage.

\textbf{Bounding LHS:} In the first stage,  we know that
\begin{align*}
    \sum_{t=1}^{s-1} \eta_t\Norm{\v_t}^2 = \frac{1}{T^{1/3}} \sum_{t=1}^{s-1} \Norm{\v_t}^2.
\end{align*}
For the second stage, our analysis leads to:
\begin{align*}
    \sum_{t=s}^T \eta_t\Norm{\v_t}^2   &= \sum_{t=s}^T \frac{\Norm{\v_t}^2}{T^{\frac{1-\alpha}{3}}\left(\sum_{i=1}^t \Norm{\v_i}^2\right)^\alpha}\\
    \geq & \sum_{t=s}^T \frac{\Norm{\v_t}^2}{T^{\frac{1-\alpha}{3}}\left(T^{\frac{1}{3}}+\sum_{i=s}^t \Norm{\v_i}^2\right)^\alpha} \\
    \geq & \sum_{t=s}^T \frac{\Norm{\v_t}^2}{T^{\frac{1-\alpha}{3}}  \left(T^{\frac{\alpha}{3}}+\left(\sum_{i=s}^t \Norm{\v_i}^2\right)^\alpha \right)} \\
    \geq & \sum_{t=s}^T \frac{\Norm{\v_t}^2}{T^{\frac{1-\alpha}{3}} \cdot 2\max\left\{T^{\frac{\alpha}{3}},\left(\sum_{i=s}^t \Norm{\v_i}^2\right)^\alpha \right\}} \\
     \geq & \frac{1}{2 T^{\frac{1-\alpha}{3}}}\min \left\{ \frac{1}{T^\frac{\alpha}{3}}\sum_{t=s}^T \Norm{\v_t}^2, \left(\sum_{t=s}^T \Norm{\v_t}^2 \right)^{1-\alpha}\right\} \\
     = &\frac{1}{2}\underbrace{\min \left\{ \frac{1}{T^{1/3}} \sum_{t=s}^T \Norm{\v_t}^2,\left(\frac{1}{T^{1/3}}\sum_{t=s}^T \Norm{\v_t}^2 \right)^{1-\alpha}\right\}}\limits_{\coloneqq \Gamma},
    \end{align*}
where the first inequality stems from $\sum_{t=1}^{s-1} \Norm{\v_t}^2\leq T^{1/3}$, the second inequality results from $(x+y)^\alpha \leq x^\alpha + y^\alpha$ for positive $x,y$ and $0 < \alpha < 1/3$, and the forth inequality applies Lemma~\ref{lem1}. Next, we bound  $\rA,\rB,\rC$ as follows.

\textbf{Bounding $\rA$:}
In the first stage, with $\eta_t = T^{-1/3}$, $\beta = T^{-2/3}$, and  $\sum_{t=1}^{s-1} \Norm{\v_t}^2 \leq T^{1/3}$, we can derive:
\begin{align*}
    \frac{2L^2}{\beta}\sum_{t=1}^{s-1} \eta_t^3\Norm{\v_t}^2 = \frac{2L^2}{T^{1/3}}   \sum_{t=1}^{s-1} \Norm{\v_t}^2 \leq 2L^2.
\end{align*}
For the second stage, the analysis gives:
\begin{align*}
    \frac{2L^2}{\beta}\sum_{t=s}^T \eta_t^3\Norm{\v_t}^2 \leq 2L^2  \sum_{t=s}^T \frac{\Norm{\v_t}^2}{T^{\frac{1-3\alpha}{3}}\left(\sum_{i=s}^t\Norm{\v_i}^2\right)^{3\alpha}}
     \leq \frac{2L^2}{1-3\alpha} \left(\frac{1}{T^{1/3}}\sum_{t=s}^T \Norm{\v_t}^2\right)^{1-3\alpha},
\end{align*}
where the second inequality uses Lemma~\ref{lem1}. Then we also have:
\begin{equation*}
    \begin{split}
       \frac{2L^2}{1-3\alpha} \left(\frac{1}{T^{1/3}}\sum_{t=s}^T \Norm{\v_t}^2\right)^{1-3\alpha} 
        =&  \frac{2L^2}{1-3\alpha}(8-24\alpha)^{1-3\alpha}\left(\frac{1}{(8-24\alpha)}\frac{1}{T^{1/3}}\sum_{t=s}^T \Norm{\v_t}^2\right)^{1-3\alpha}\\
     \leq & 3\alpha\left(\frac{2L^2}{1-3\alpha}(8-24\alpha)^{1-3\alpha}\right)^{\frac{1}{3\alpha}}+\frac{1}{8 T^{1/3}}\sum_{t=s}^T \Norm{\v_t}^2, 
    \end{split}
\end{equation*}
where the  inequality  employs Young's inequality, such that $xy \leq 3\alpha x^{\frac{1}{3\alpha}} + (1-3\alpha)y^{\frac{1}{1-3\alpha}}$ for positive $x, y$. Very similarly, we have:
\begin{equation*}
    \begin{split}
     &\frac{2L^2}{1-3\alpha} \left(\frac{1}{T^{1/3}}\sum_{t=s}^T \Norm{\v_t}^2\right)^{1-3\alpha} \\
     =& \frac{2L^2}{1-3\alpha}\left(\frac{8-24\alpha}{1-\alpha}\right)^{\frac{1-3\alpha}{1-\alpha}}
     \left(\frac{1-\alpha}{8-24\alpha}\right)^{\frac{1-3\alpha}{1-\alpha}} \left(\frac{1}{T^{1/3}}\sum_{t=s}^T \Norm{\v_t}^2\right)^{1-3\alpha}\\
     \leq & \frac{2\alpha}{1-\alpha}\left(\frac{2L^2}{1-3\alpha}\left(\frac{8-24\alpha}{1-\alpha}\right)^{\frac{1-3\alpha}{1-\alpha}} \right)^{\frac{1-\alpha}{2\alpha}}+\frac{1}{8}\left(\frac{1}{ T^{1/3}}\sum_{t=s}^T \Norm{\v_t}^2\right)^{1-\alpha},
    \end{split}
\end{equation*}
where the second inequality employs Young's inequality, such that $xy \leq \frac{2\alpha}{1-\alpha} x^{\frac{1-\alpha}{2\alpha}} + \frac{1-3\alpha}{1-\alpha} y^{\frac{1-\alpha}{1-3\alpha}}$ for positive $x, y$. Combining all above, we know that
\begin{align*}
    \rA \leq &2L^2 + 3\alpha\left(\frac{2L^2}{1-3\alpha}(8-24\alpha)^{1-3\alpha}\right)^{\frac{1}{3\alpha}}\\
    &+\frac{2\alpha}{1-\alpha}\left(\frac{2L^2}{1-3\alpha}\left(\frac{8-24\alpha}{1-\alpha}\right)^{\frac{1-3\alpha}{1-\alpha}} \right)^{\frac{1-\alpha}{2\alpha}}+ \frac{\Gamma}{8} .
\end{align*}

\textbf{Bounding $\rB$:}
In the first stage, with the learning rate set at $\eta_t = T^{-1/3}$ and $\sum_{t=1}^{s-1} \Norm{\v_t}^2 \leq T^{1/3}$, we observe:
\begin{align*}
    L\sum_{t=1}^{s-1} \eta_t^2\Norm{\v_t}^2 = \frac{L}{T^{2/3}}   \sum_{t=1}^{s-1} \Norm{\v_t}^2 \leq \frac{L}{T^{1/3}} \leq L.
\end{align*}

For the second stage, our analysis reveals:
\begin{align*}
    L\sum_{t=s}^T \eta_t^2\Norm{\v_t}^2 \leq  &L  \sum_{t=s}^T \frac{\Norm{\v_t}^2}{T^{\frac{2(1-\alpha)}{3}}\left(\sum_{i=s}^t\Norm{\v_i}^2\right)^{2\alpha}} \leq  \frac{L}{1-2\alpha} \left(\frac{1}{T^{1/3}}\sum_{t=s}^T \Norm{\v_t}^2\right)^{1-2\alpha},
\end{align*}
where the second inequality leverages Lemma~\ref{lem1}, and we have:
\begin{equation*}
    \begin{split}
        &\frac{L}{1-2\alpha} \left(\frac{1}{T^{1/3}}\sum_{t=s}^T \Norm{\v_t}^2\right)^{1-2\alpha} \\
        =&\frac{L}{1-2\alpha}(8-16\alpha)^{1-2\alpha}\frac{1}{(8-16\alpha)^{1-2\alpha}} \left(\frac{1}{T^{1/3}}\sum_{t=s}^T \Norm{\v_t}^2\right)^{1-2\alpha}\\
    \leq & 2\alpha \left(\frac{(8-16\alpha)^{1-2\alpha}L}{1-2\alpha} \right)^{\frac{1}{2\alpha}} + \frac{1}{8T^{1/3}}\sum_{t=s}^T \Norm{\v_t}^2,
    \end{split}
\end{equation*}
where the inequality is due to Young's inequality, such that $xy \leq 2\alpha x^{\frac{1}{2\alpha}} +(1-2\alpha) y^{\frac{1}{1-2\alpha}}$ for positive $x, y$. Very similarly, we have:
\begin{align*}
    &\frac{L}{1-2\alpha} \left(\frac{1}{T^{1/3}}\sum_{t=s}^T \Norm{\v_t}^2\right)^{1-2\alpha} \\
    =& \frac{L}{1-2\alpha}\left(\frac{8-16\alpha}{1-\alpha}\right)^{\frac{1-2\alpha}{1-\alpha}}\left(\frac{1-\alpha}{8-16\alpha}\right)^{\frac{1-2\alpha}{1-\alpha}} \left(\frac{1}{T^{1/3}}\sum_{t=s}^T \Norm{\v_t}^2\right)^{1-2\alpha} \\
    \leq & \frac{\alpha}{1-\alpha}\left(\frac{L}{1-2\alpha} \left(\frac{8-16\alpha}{1-\alpha}\right)^{\frac{1-2\alpha}{1-\alpha}}\right)^{\frac{1-\alpha}{\alpha}} + \frac{1}{8}\left(\frac{1}{T^{1/3}}\sum_{t=s}^T \Norm{\v_t}^2\right)^{1-\alpha},
\end{align*}
where the second inequality employs Young's inequality, such that $xy \leq \frac{\alpha}{1-\alpha} x^{\frac{1-\alpha}{\alpha}} + \frac{1-2\alpha}{1-\alpha} y^{\frac{1-\alpha}{1-2\alpha}}$ for positive $x, y$. Combining all the above, we know that
\begin{align*}
    \rB & \leq L +2\alpha \left(\frac{(8-16\alpha)^{1-2\alpha}L}{1-2\alpha} \right)^{\frac{1}{2\alpha}} +\frac{\alpha}{1-\alpha}\left(\frac{L}{1-2\alpha} \left(\frac{8-16\alpha}{1-\alpha}\right)^{\frac{1-2\alpha}{1-\alpha}}\right)^{\frac{1-\alpha}{\alpha}}+ \frac{\Gamma}{8}.
\end{align*}

\textbf{Bounding $\rC$:} Given that $\beta = T^{-2/3}$ and $\eta_t \leq T^{-1/3}$, we can easily know that $\rC \leq 2 \sigma^2$.

So far, we bound all terms in Lemma~\ref{lem2}, and we can deduce 
\begin{align*}
     \textbf{LHS} &\geq \E\left[\frac{1}{T^{1/3}} \sum_{t=1}^{s-1} \Norm{\v_t}^2\right] + \E\left[\frac{\Gamma}{2}\right] ;\quad
     \textbf{RHS} \leq \E\left[\frac{\Gamma}{4}\right] + C_0,
\end{align*}
where 
\begin{equation}\label{C0}
    \begin{split}
        C_0 = &\left(2\Delta_f + 3\sigma^2 + L+2L^2 \right) +3\alpha\left(\frac{2L^2}{1-3\alpha}\left(8-24\alpha\right)^{1-3\alpha}\right)^{\frac{1}{3\alpha}}\\
    &+\frac{2\alpha}{1-\alpha}\left(\frac{2L^2}{1-3\alpha}\left(\frac{8-24\alpha}{1-\alpha}\right)^{\frac{1-3\alpha}{1-\alpha}} \right)^{\frac{1-\alpha}{2\alpha}}+2\alpha \left(\frac{(8-16\alpha)^{1-2\alpha}L}{1-2\alpha} \right)^{\frac{1}{2\alpha}} \\
    & +\frac{\alpha}{1-\alpha}\left(\frac{L}{1-2\alpha} \left(\frac{8-16\alpha}{1-\alpha}\right)^{\frac{1-2\alpha}{1-\alpha}}\right)^{\frac{1-\alpha}{\alpha}}.
    \end{split}
\end{equation}
These suggest that 
\begin{align*}
     \E\left[\sum_{t=1}^{s-1} \Norm{\v_t}^2\right] \leq C_0 T^{1/3} ;\quad
     \E\left[\Gamma\right] \leq  4C_0.
\end{align*}
With the definition of $\Gamma$, we know that
\begin{align*}
    &\E\left[ \min \left\{ \frac{1}{T^{1/3}} \sum_{t=s}^T \Norm{\v_t}^2,\left(\frac{1}{T^{1/3}}\sum_{t=s}^T \Norm{\v_t}^2 \right)^{1-\alpha}\right\} \right] \leq 4C_0,
\end{align*}
which indicates the following by applying Jensen's inequality:
\begin{align*}
     \E\left[ \frac{1}{T}\sum_{t=s}^T \Norm{\v_t}\right] \leq \max\{(4C_0)^{\frac{1}{2}},(4C_0)^{\frac{1}{2(1-\alpha)}}\} T^{-1/3}.
\end{align*}
Also, because of Jensen's inequality, we have:
\begin{align*}
    \mathbb{E}\left[\frac{1}{T}\sum_{t=1}^{s-1}||\v_t||\right]  &\leq \sqrt{\mathbb{E} \left[  \left(\frac{1}{T}\sum_{t=1}^{s-1}||\v_t||\right)^2 \right]} = \sqrt{\mathbb{E} \left[ \frac{1}{T^2} \left(\sum_{t=1}^{s-1}||\v_t||\right)^2 \right]} \\
    &\leq \sqrt{\mathbb{E} \left[ \frac{s}{T^2} \sum_{t=1}^{s-1}||\v_t||^2 \right]} \leq \sqrt{\mathbb{E} \left[ \frac{1}{T} \sum_{t=1}^{s-1}||\v_t||^2 \right]} \\
    &\leq  \sqrt{ \frac{1}{T} C_0 T^{1/3}} = \sqrt{C_0 }T^{-1/3}.
\end{align*}
Summing up, we have proven that 
\begin{align}\label{L2}
     \E\left[ \frac{1}{T}\sum_{t=1}^T \Norm{\v_t}\right] \leq \max\{3(C_0)^{\frac{1}{2}}, (C_0)^{\frac{1}{2}}+(4C_0)^{\frac{1}{2(1-\alpha)}}\} T^{-1/3}.
\end{align} 

Finally, we finish our proof by introducing the following lemma. 
\begin{lemma}\label{lem3} Suppose  $0<\beta<1$, our method ensures that
    \begin{align*}
      &\sum_{t=1}^{T} \E\left[\Norm{\v_t - \nabla f(\x_{t})}^2\right] 
        \leq  3\sigma^2 T^{1/3}  + \E\left[\frac{2L^2}{\beta}\sum_{t=1}^T \eta_t^2\Norm{\v_t}^2 \right].
    \end{align*}
\end{lemma}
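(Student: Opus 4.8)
The plan is to mirror the recursion already derived in the proof of Lemma~\ref{lem2}, but instead of pre-multiplying by $\eta_t$ and telescoping with the $1/\beta$ trick, keep the raw recursion on $\E[\Norm{\v_t-\nabla f(\x_t)}^2]$ and sum it directly. Concretely, starting from the one-step bound established in equation~(\ref{lemma 6+}),
\begin{equation*}
\E_{\xi_{t+1}}\left[\Norm{\nabla f(\x_{t+1})-\v_{t+1}}^2\right] \leq (1-\beta)\Norm{\v_t-\nabla f(\x_t)}^2 + 2\beta^2\sigma^2 + 2L^2\eta_t^2\Norm{\v_t}^2,
\end{equation*}
I would rearrange to isolate $\beta\Norm{\v_t-\nabla f(\x_t)}^2$ and get a telescoping inequality without the $\eta_t$ weights, namely $\Norm{\v_t-\nabla f(\x_t)}^2 \le \frac1\beta\big(\Norm{\v_t-\nabla f(\x_t)}^2 - \E_{\xi_{t+1}}[\Norm{\v_{t+1}-\nabla f(\x_{t+1})}^2]\big) + 2\beta\sigma^2 + \frac{2L^2}{\beta}\eta_t^2\Norm{\v_t}^2$. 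Taking full expectations, summing over $t=1,\dots,T$, and telescoping the first term leaves $\frac1\beta\E[\Norm{\v_1-\nabla f(\x_1)}^2]$ plus $2\beta\sigma^2 T$ plus the $\frac{2L^2}{\beta}\sum_t\eta_t^2\Norm{\v_t}^2$ term.

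The remaining work is just to bound the two scalar terms by $3\sigma^2 T^{1/3}$. For the first, the large initial batch $B_0 = T^{1/3}$ gives $\E[\Norm{\v_1-\nabla f(\x_1)}^2] \le \sigma^2/T^{1/3}$, and since $\beta = T^{-2/3}$ we get $\frac1\beta\E[\Norm{\v_1-\nabla f(\x_1)}^2] \le T^{2/3}\cdot\sigma^2 T^{-1/3} = \sigma^2 T^{1/3}$. For the second, $2\beta\sigma^2 T = 2\sigma^2 T^{1-2/3} = 2\sigma^2 T^{1/3}$. Adding these yields exactly $3\sigma^2 T^{1/3}$, which matches the claimed bound with the $\frac{2L^2}{\beta}\sum_t \eta_t^2\Norm{\v_t}^2$ term carried along unevaluated.

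There is essentially no hard part here — this lemma is a bookkeeping variant of steps already carried out inside the proof of Lemma~\ref{lem2} (equations~(\ref{lemma 6+}) through~(\ref{LL4})), the only difference being that we do not weight the recursion by $\eta_t$, so the telescoping is cleaner and no monotonicity of $\eta_t$ is needed. The one point requiring a small amount of care is the very first rearrangement: we use $(1-\beta)^2 \le 1-\beta$ and Young's inequality exactly as in~(\ref{lemma 6+}), and we must make sure the constant in front of $2\beta^2\sigma^2$ becomes $2\beta\sigma^2$ after dividing by $\beta$, and likewise $2L^2\eta_t^2\Norm{\v_t}^2$ becomes $\frac{2L^2}{\beta}\eta_t^2\Norm{\v_t}^2$. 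Everything else is substitution of the chosen values $\beta = T^{-2/3}$, $B_0 = T^{1/3}$.
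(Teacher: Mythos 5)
Your proposal is correct and follows essentially the same route as the paper's own proof: rearrange the one-step recursion from equation~(\ref{lemma 6+}) without $\eta_t$ weights, divide by $\beta$, telescope, and then substitute $\beta = T^{-2/3}$ and the initial-batch bound $\E[\Norm{\v_1-\nabla f(\x_1)}^2]\le \sigma^2/T^{1/3}$ to obtain the $3\sigma^2 T^{1/3}$ term. Your observation that no monotonicity of $\eta_t$ is needed here is also consistent with the paper's argument.
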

\begin{proof}
    First note that we have already proven the following in equation~(\ref{lemma 6+}).
    \begin{align*}
    \E\left[\Norm{\nabla f(\x_{t+1}) -\v_{t+1}}^2\right] \leq  (1-\beta)\E\left[\|\v_{t} - \nabla f(\x_{t})\|^2\right]  + 2\beta^2\sigma^2 + 2L^2 \E\left[\eta_{t}^2\|\v_{t} \|^2\right] .
    \end{align*}

After rearranging the items, we can get the following:
    \begin{align*}
        \E\left[\Norm{\v_t - \nabla f(\x_{t})}^2\right]
        \leq &\frac{1}{\beta}\left(\E\left[\Norm{\v_t - \nabla f(\x_{t})}^2 \right]- \E\left[\Norm{\v_{t+1} - \nabla f(\x_{t+1})}^2\right] \right)+  2\beta  \sigma^2 \\
        &+ \frac{2L^2}{\beta}\E\left[\eta_t^2 \Norm{\v_t}^2\right].
    \end{align*}
    By summing up, we have:
    \begin{align}\label{lemma3-nobatch}
        \sum_{t=1}^{T} \E\left[\Norm{\v_t - \nabla f(\x_{t})}^2\right] 
        \leq  \frac{1}{\beta}\E\left[\Norm{\v_1 - \nabla f(\x_{1})}^2\right] 
        +  2\beta \sigma^2 T  + \frac{2L^2}{\beta}\sum_{t=1}^T\E\left[ \eta_t^2\Norm{\v_t}^2 \right].
    \end{align}
Since we use a large batch size in the first iteration, that is, $B_0 = T^{1/3}$, we can now ensure that $\E\left[\Norm{\v_1 - \nabla f(\x_{1})}^2\right] \leq \frac{\sigma^2}{B_0} = \frac{\sigma^2}{T^{1/3}}$. Note that $\beta=\frac{1}{T^{2/3}}$, so first term equals to $\sigma^2 T^{1/3}$ and the second term reduces to $2\sigma^2 T^{1/3}$. To this end, we ensure
 \begin{align*}
        \sum_{t=1}^{T} \E\left[\Norm{\v_t - \nabla f(\x_{t})}^2\right] 
        \leq  3\sigma^2 T^{1/3}  + \frac{2L^2}{\beta}\sum_{t=1}^T\E\left[ \eta_t^2\Norm{\v_t}^2 \right].
    \end{align*}
    Thus we finish the proof for this lemma.
\end{proof}
Here, we bound the term  $\frac{2L^2}{\beta}\sum_{t=1}^T \eta_t^2\Norm{\v_t}^2 $ as follows.
In the first stage, with $\eta_t =\frac{1}{T^{1/3}}$, we have:
\begin{align*}
    \frac{2L^2}{\beta}\sum_{t=1}^{s-1}\eta_t^2\Norm{\v_t}^2 = \frac{2L^2}{\beta}\sum_{t=1}^{s-1} \frac{1}{T^{2/3}}\Norm{\v_t}^2  \leq 2L^2 T^{1/3}.
\end{align*}
For the second stage, the analysis gives:
\begin{align*}
     \frac{2L^2}{\beta}\sum_{t=s}^T \eta_t^2\Norm{\v_t}^{2}
    \leq &2L^2T^{2/3}\sum_{t=s}^T\frac{\Norm{\v_t}^2 }{T^{\frac{2-2\alpha}{3}}(\sum_{i=s}^t\Norm{\v_i})^{2\alpha}}\\
    \leq & \frac{2L^2T^{2\alpha/3}}{1-2\alpha}\frac{((1-2\alpha)/L)^{1-2\alpha}}{((1-2\alpha)/L)^{1-2\alpha}}\left(\sum_{t=s}^T\Norm{\v_t}^2\right)^{1-2\alpha}\\
    \leq & 2\alpha \left(\frac{2L^2T^{2\alpha/3}((1-2\alpha)/L)^{1-2\alpha}}{1-2\alpha}\right)^{1/2\alpha}  + L\sum_{t=s}^T\Norm{\v_t}^2\\
    \leq &  2\alpha\left(\frac{2L^2((1-2\alpha)/L)^{1-2\alpha}}{1-2\alpha}\right)^{\frac{1}{2\alpha}}T^{1/3}  + L\sum_{t=1}^T\Norm{\v_t}^2,
\end{align*}
where the second inequality uses Lemma~\ref{lem1}, and the third one employs Young's inequality, such that $xy \leq 2\alpha x^{\frac{1}{2\alpha}} + (1-2\alpha)y^{\frac{1}{1-2\alpha}}$ for positive $x, y$. We also know that
\begin{align*}
     \frac{2L^2}{\beta}\sum_{t=s}^T \eta_t^2\Norm{\v_t}^{2}
    \leq &2L^2T^{2/3}\sum_{t=s}^T\frac{\Norm{\v_t}^2 }{T^{\frac{2-2\alpha}{3}}(\sum_{i=s}^t\Norm{\v_i})^{2\alpha}}\\
    \leq & \frac{2L^2T^{2\alpha/3}}{1-2\alpha}\left(\frac{1-2\alpha}{(1-\alpha)L}T^{-\frac{\alpha}{3}}\right)^{\frac{1-2\alpha}{1-\alpha}}\left(\frac{1-\alpha}{1-2\alpha}T^{\frac{\alpha}{3}}L\right)^{\frac{1-2\alpha}{1-\alpha}}\left(\sum_{t=s}^T\Norm{\v_t}^2\right)^{1-2\alpha}\\
    \leq & \frac{\alpha}{1-\alpha} \left(\frac{2L^2T^{2\alpha/3}}{1-2\alpha}\left(\frac{1-2\alpha}{(1-\alpha)L}T^{-\frac{\alpha}{3}}\right)^{\frac{1-2\alpha}{1-\alpha}}\right)^{\frac{1-\alpha}{\alpha}}  + T^{\frac{\alpha}{3}}L\left(\sum_{t=s}^T\Norm{\v_t}^2\right)^{1-\alpha}\\
    \leq &  \frac{\alpha}{1-\alpha} \left(\frac{2L^2}{1-2\alpha}\left(\frac{1-2\alpha}{(1-\alpha)L}\right)^{\frac{1-2\alpha}{1-\alpha}}\right)^{\frac{1-\alpha}{\alpha}}T^{1/3}  + T^{\frac{\alpha}{3}}L\left(\sum_{t=s}^T\Norm{\v_t}^2\right)^{1-\alpha},
\end{align*}
where the second inequality uses Lemma~\ref{lem1}, and the third one employs Young's inequality, such that $xy \leq \frac{\alpha}{1-\alpha} x^{\frac{1-\alpha}{\alpha}} + \frac{1-2\alpha}{1-\alpha}y^{\frac{1-\alpha}{1-2\alpha}}$ for positive $x, y$.

As a result, we have:
\begin{align*}
    &\sum_{t=1}^{T} \E\left[\Norm{\v_t - \nabla f(\x_{t})}^2\right]        \\ \leq&  \left(3\sigma^2+4C_0L+
    L^{\frac{1+2\alpha}{2\alpha}}\left(\frac{2(1-2\alpha)^{1-2\alpha}}{1-2\alpha}\right)^{\frac{1}{2\alpha}}+ L^{\frac{1}{\alpha}} \left(\frac{2}{1-2\alpha}\left(\frac{1-2\alpha}{1-\alpha}\right)^{\frac{1-2\alpha}{1-\alpha}}\right)^{\frac{1-\alpha}{\alpha}}\right) T^{1/3} .
\end{align*}

By integrating these findings, we can finally have:
\begin{align*}
    \E\left[\frac{1}{T}\sum_{t=1}^T  \Norm{\nabla f(\x_t)}\right]
    \leq  \frac{1}{T}\E\left[\sum_{t=1}^T  \Norm{\v_t} \right]+  \frac{1}{T}\left[\sum_{t=1}^T\Norm{\nabla f(\x_t) -\v_t} \right] 
    \leq  \frac{C^{\prime}}{T^{1/3}}, 
\end{align*}

where 
\begin{equation*}
    \begin{split}
        C^{\prime} &=\max\{3(C_0)^{\frac{1}{2}}, (C_0)^{\frac{1}{2}}+(4C_0)^{\frac{1}{2(1-\alpha)}}\}\\
        &\quad+\sqrt{3\sigma^2+4C_0L+
    L^{\frac{1+2\alpha}{2\alpha}}\left(\frac{2(1-2\alpha)^{1-2\alpha}}{1-2\alpha}\right)^{\frac{1}{2\alpha}}+ L^{\frac{1}{\alpha}} \left(\frac{2}{1-2\alpha}\left(\frac{1-2\alpha}{1-\alpha}\right)^{\frac{1-2\alpha}{1-\alpha}}\right)^{\frac{1-\alpha}{\alpha}}}\\
        &=\mathcal{O}\left(\Delta_f^{\frac{1}{2(1-\alpha)}}+\sigma^{\frac{1}{1-\alpha}} + L^{\frac{1}{2\alpha}} \right),
    \end{split}
\end{equation*}
with $C_0$ defined in equation~(\ref{C0}).
We find that larger $\alpha$ leads to better dependence on $L$ and worse reliance on parameters $\Delta$ and $\sigma$. For $\alpha \to \frac{1}{3}$, we can obtain that 
\begin{align*}
    &\E\left[\frac{1}{T}\sum_{t=1}^T  \Norm{\nabla f(\x_t)}\right] \leq \mathcal{O}\left(\frac{\Delta_f^{3/4}+\sigma^{3/2} + L^{3/2}}{T^{1/3}}\right).
\end{align*}
Since we require $0<\alpha < \frac{1}{3}$, in practice, we can use $\alpha = 0.3$ instead, which leads to a convergence rate of $\mathcal{O}\left(\frac{\Delta_f^{5/7}+\sigma^{10/7} + L^{{5}/{3}}}{T^{1/3}}\right)$.

\section{Proof of Theorem~\ref{thm:main_1}}
Since $2^0+2^1+\cdots+2^{K-1} < 2^K$, running the algorithm for $T$ iterations guarantees at least $K = \lfloor \log (T) \rfloor $ complete stages. In the theoretical analysis, we can simply use the output of the last complete stage $K=\lfloor \log (T) \rfloor$, which has been at least run for $2^{K-1} \geq T/4$ iterations. According to the analysis of Theorem~\ref{thm:main_0}, we have already known that running the Algorithm~\ref{alg:storm} for $T/4$ iterations leads to the following guarantee:
\begin{align*}
    \E\left[ \Norm{\nabla f(\x_{\tau})} \right] \leq \mathcal{O}\left(\frac{\Delta_f^{\frac{1}{2(1-\alpha)}}+\sigma^{\frac{1}{1-\alpha}} + L^{\frac{1}{2\alpha}}}{{(T/4)}^{1/3}}\right)=\mathcal{O}\left(\frac{\Delta_f^{\frac{1}{2(1-\alpha)}}+\sigma^{\frac{1}{1-\alpha}} + L^{\frac{1}{2\alpha}}}{{T}^{1/3}}\right),
\end{align*}
which is on the same order of the original convergence rate.

\section{Proof of Theorem~\ref{thm:main_0+}}
According to equation~(\ref{smooth}), we have already proven that
\begin{align*}
  \sum_{t=1}^T \eta_t \Norm{\v_t}^2 \leq 2F(\x_1) - 2F(\x_{T+1}) + \sum_{t=1}^T \eta_t \Norm{\nabla F(\x_t) - \v_t}^2 + \sum_{t=1}^T \eta_t^2 L \Norm{\v_t}^2.  
\end{align*}
Then we bound the term  $\sum_{t=1}^T \eta_t \Norm{\nabla F(\x_t) - \v_t}^2$ as follows:
\begin{align*}
\begin{split}
     \Norm{\nabla F(\x_t) - \v_t}^2
    & \leq 2 \Norm{\nabla f(g(\x_t))\nabla g(\x_t) - \nabla f(\u_t) \nabla g(\x_t)}^2 + 2 \Norm{\nabla f(\u_t) \nabla g(\x_t) -\v_t}^2\\
    & \leq 2C^2L^2\Norm{g(\x_t) - \u_t}^2+ 2\Norm{\v_t - \nabla f(\u_t) \nabla g(\x_t)}^2.
\end{split}
\end{align*}
Define that $G_t = \nabla f(\u_t) \nabla g(\x_t)$, then we have:
\begin{align*}
    \sum_{t=1}^T  \E\left[\eta_t\Norm{\nabla F(\x_t) - \v_t}^2\right] \leq 2C^2L^2\sum_{t=1}^T  \E\left[\eta_t\Norm{g(\x_t) - \u_t}^2\right] + 2\sum_{t=1}^T \E\left[ \eta_t\Norm{\v_t - G_t}^2\right].
\end{align*}

For the term $\sum_{t=1}^T \E\left[\eta_t \Norm{\v_t - G_t}^2\right]$, following the very similar analysis of  equation~(\ref{lemma 6+}), we have the following guarantee:
\begin{align*}
    &\quad \E_{\xi_{t+1},\zeta_{t+1}}\left[\Norm{\v_{t+1} - G_{t+1}}^2\right] \\
    & \leq (1-\beta)\Norm{\v_{t} - G_{t}}^2 \\
    &\qquad + 2\beta^2 \E_{\xi_{t+1},\zeta_{t+1}}\left[\Norm{\nabla f(\u_{t+1};\xi_{t+1})\nabla g(\x_{t+1};\zeta_{t+1}) - \nabla f(\u_{t+1}) \nabla g(\x_{t+1})}^2\right]  \\
    & \qquad + 2 \E_{\xi_{t+1},\zeta_{t+1}}\left[\Norm{\nabla f(\u_{t+1};\xi_{t+1})\nabla g(\x_{t+1};\zeta_{t+1}) - \nabla f(\u_{t};\xi_{t+1})\nabla g(\x_{t};\zeta_{t+1})}^2\right]\\
    & \leq (1-\beta)\Norm{\v_{t} - G_{t}}^2+ 4C^2\sigma^2\beta^2 + 4C^2L^2  \E\left[\eta_t^2 \Norm{\v_t}^2\right] + 4C^2L^2 \E\left[\Norm{\u_{t+1}-\u_{t}}^2\right].
\end{align*}

That is to say:
\begin{align*}
    &\sum_{t=1}^T  \E\left[\eta_t\Norm{\v_{t} - G_{t}}^2\right] \\
    \leq& \E\left[\frac{\eta_1}{\beta}\Norm{\v_{1} - G_{1}}^2\right] + 4C^2\sigma^2\beta\E\left[\sum_{t=1}^T\eta_t \right]+ \frac{4C^2L^2}{\beta}\sum_{t=1}^T \E\left[\eta_t^3  \Norm{\v_t}^2\right] \\
    &\quad + \frac{4C^2L^2}{\beta} \sum_{t=1}^T \E\left[\eta_t \Norm{\u_{t+1}-\u_{t}}^2\right] \\
    \leq& (1+4C^2)\sigma^2 + \frac{4C^2L^2}{\beta}\sum_{t=1}^T  \E\left[ \eta_t^3\Norm{\v_t}^2\right] + \frac{4C^2L^2}{\beta} \sum_{t=1}^T  \E\left[\eta_t\Norm{\u_{t+1}-\u_{t}}^2\right],
\end{align*}
where the last inequality due to the fact that $\beta=T^{-2/3}$, $\eta_t \leq T^{-1/3}$, and we use a large batch size $T^{1/3}$ in the first iteration. Next, we further bound the term $\sum_{t=1}^T  \E\left[\eta_{t-1}\Norm{\u_t-\u_{t-1}}^2\right].$ First, we can ensure that:

\begin{align*}
   &\quad \E_{\zeta_t}\left[\Norm{\u_t-\u_{t-1}}^2\right] \\&  =\E_{\zeta_t}\left[\Norm{\beta(g(\x_t;\zeta_t) - \u_{t-1}) + (1-\beta)(g(\x_t;\zeta_t)  - g(\x_{t-1};\zeta_t) )}^2\right] \\
   & = \E_{\zeta_t}\left[\Norm{\beta(g(\x_{t-1}) - \u_{t-1}) + (g(\x_t;\zeta_t)  - g(\x_{t-1};\zeta_t) ) + \beta (g(\x_{t-1};\zeta_t)  - g(\x_{t-1}))}^2\right]\\
   & \leq 3\beta^2 \Norm{g(\x_{t-1}) -\u_{t-1}}^2 + 3C^2\eta_{t-1}^2\Norm{\v_{t-1}}^2 +3\beta^2\sigma^2.
\end{align*}

So we know that \begin{align*}
    &\frac{1}{\beta} \sum_{t=1}^T  \E\left[\eta_t\Norm{\u_{t+1}-\u_{t}}^2\right] \\
    \leq &3\beta \sum_{t=1}^T  \E\left[\eta_t\Norm{g(\x_{t}) -\u_{t}}^2\right] + 3C^2  \E\left[\sum_{t=1}^T\frac{\eta_t^3}{\beta}\Norm{\v_t}^2\right] +3\beta\sigma^2\E\left[\sum_{t=1}^T \eta_t\right].
\end{align*}

So far, we have 
\begin{align*}
    \sum_{t=1}^T  \E\left[\eta_t\Norm{\nabla F(\x_t) - \v_t}^2\right] \leq & 26C^2L^2 \sum_{t=1}^T  \E\left[\eta_t\Norm{\u_t - g(\x_t)}^2\right]  \\
         + &\frac{8C^2L^2 +24C^4L^2}{\beta}\sum_{t=1}^T\E\left[\eta_t^3\Norm{\v_t}^2\right] + (2+8C^2+24C^2L^2)\sigma^2.
\end{align*}

Next, we can bound $\sum_{t=1}^T \eta_t \Norm{\u_t - g(\x_t)}^2$ following equation~(\ref{lemma 6+}), as:
\begin{align*}
    \Norm{\u_t - g(\x_t)}^2 \leq \frac{1}{\beta}\left(\Norm{\u_t - g(\x_t)}^2 - \E_{\zeta_{t+1}}\left[\Norm{\u_{t+1} - g(\x_{t+1})}^2\right]\right) + 2\beta\sigma^2 + \frac{2C^2\eta_t^2}{\beta}\Norm{\v_t}^2.
\end{align*}

So we can have
\begin{align*}
    \E\left[\sum_{t=1}^T \eta_t\Norm{\u_t - g(\x_t)}^2 \right]&\leq \E\left[\frac{\eta_1}{\beta}\Norm{\u_1 - g(\x_1)}^2\right] + 2\beta\sigma^2 \E\left[\sum_{t=1}^T\eta_t \right]+ \frac{2C^2}{\beta}\sum_{t=1}^T \E\left[\eta_t^3\Norm{\v_t}^2\right]\\
    &\leq 3\sigma^2+ \frac{2C^2}{\beta}\sum_{t=1}^T \E\left[\eta_t^3\Norm{\v_t}^2\right],
\end{align*}
where the last inequality due to the fact that $\beta=T^{-2/3}$, $\eta_t \leq T^{-1/3}$, and we use a large batch size $T^{1/3}$ in the first iteration.
Combining all, we have:
\begin{align*}
    \sum_{t=1}^T  \E\left[\eta_t\Norm{\v_t}^2\right] &\leq  2\Delta_F    +(2+8C^2+102C^2L^2)\sigma^2  \\
    &\quad +  (8C^2L^2+76C^4L^2   ) \E\left[\sum_{t=1}^T \frac{\eta_t^3}{\beta} \Norm{\v_t}^2\right] + L \E\left[\sum_{t=1}^T \eta_t^2 \Norm{\v_t}^2\right].
\end{align*}
Treating $\Delta_F,C,L,\sigma$ as constant, the above inequality is very similar to Lemma~\ref{lem2}.
Thus following the very similar analysis after Lemma~\ref{lem2}, we can show that:
\begin{align*}
    \E\left[\frac{1}{T}\sum_{t=1}^T \Norm{\v_t}\right] \leq \mathcal{O}(T^{-1/3}).
\end{align*}
According to previous analysis, we also have that
\begin{align*}
    &\sum_{t=1}^T  \E\left[\Norm{\nabla f(\x_t) - \v_t}^2\right] \\
    \leq & 26C^2L^2 \sum_{t=1}^T  \E\left[\Norm{\u_t - g(\x_t)}^2\right] + \frac{8C^2L^2 +24C^4L^2}{\beta}\E\left[\sum_{t=1}^T\eta_t^2\Norm{\v_t}^2\right] \\
    &\qquad + (2+8C^2+24C^2L^2)\sigma^2 T^{1/3}\\
    \leq &  (2+8C^2+102C^2L^2)\sigma^2T^{1/3}  + {8C^2L^2 +76C^4L^2}\E\left[\sum_{t=1}^T\frac{\eta_t^2}{\beta}\Norm{\v_t}^2\right], 
\end{align*}
which is similar to Lemma~\ref{lem3}, and leads to $\sum_{t=1}^T  \E\left[\Norm{\nabla f(\x_t) - \v_t}/T\right] \leq \mathcal{O}(T^{-1/3})$ following the same analysis.
Combing all these together, we can deduce that
\begin{align*}
    \frac{1}{T}\sum_{t=1}^T \Norm{\nabla F(\x_t)} \leq \mathcal{O}(T^{-1/3}),
\end{align*}
which finishes the proof of Theorem~\ref{thm:main_0+}.

\section{Proof of Theorem~\ref{thm:main_2}}
Due to the smoothness of $F(\x)$, we have proven the following in equation~(\ref{smooth}):
\begin{align*}
    \sum_{t=1}^T {\eta_t}\Norm{\v_t}^2  \leq 2F(\x_1) -2F(\x_{T+1})+ \sum_{t=1}^T {\eta_t}\Norm{\nabla F(\x_t) -\v_t}^2   +\sum_{t=1}^T{\eta_t^2 L} \Norm{\v_t}^2.
\end{align*}

For the LHS, we have the following guarantee:
    \begin{align*}
    \sum_{t=1}^T {\eta_t}\Norm{\v_t}^2& =   \sum_{t=1}^T\frac{\Norm{\v_t}^2}{n^{\frac{1-\alpha}{2}}\left( \sum_{i=1}^t \Norm{\v_i}^2\right)^\alpha} \geq \left(\frac{1}{\sqrt{n}}\sum_{t=1}^T \Norm{\v_t}^2\right)^{1-\alpha} .
\end{align*}

Then, we bound the terms in the RHS. First, we have the following lemma.
\begin{lemma} Define that $ \z_{t} = \nabla f_{i_t}(\x_t)  - g_t^{i_t} +\frac{1}{n} \sum_{i=1}^n g_t^i$, we have:
    \begin{align*}
    \E\left[\sum_{t=1}^T \eta_t \Norm{\nabla F(\x_t) -\v_t}^2\right] \leq 2\beta \E\left[\sum_{t=1}^T \eta_t \Norm{\nabla F(\x_{t+1}) -\z_{t+1}}^2\right] + 2L^2 \E\left[\sum_{t=1}^T \frac{\eta_t^3}{\beta} \Norm{\v_t}^2\right]. 
    \end{align*}
\end{lemma}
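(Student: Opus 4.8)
The plan is to reproduce, for the SAG-augmented estimator~(\ref{FS-v}), the one-step error recursion behind equation~(\ref{lemma 6+}). Let $\H_{t+1}$ denote the $\sigma$-algebra generated by $i_1,\dots,i_t$, so that $\x_{t+1},\v_t,g_{t+1}$ and $\eta_t$ are $\H_{t+1}$-measurable while $i_{t+1}$ is uniform on $\{1,\dots,n\}$ and independent of $\H_{t+1}$. Subtracting $\nabla F(\x_{t+1})$ from~(\ref{FS-v}) gives $\v_{t+1}-\nabla F(\x_{t+1}) = (1-\beta)(\v_t-\nabla F(\x_t)) + A_{t+1}$, and the first task is the algebraic identity $A_{t+1} = (1-\beta)B_{t+1} - \beta(\nabla F(\x_{t+1})-\z_{t+1})$ with $B_{t+1} := (\nabla f_{i_{t+1}}(\x_{t+1})-\nabla f_{i_{t+1}}(\x_t)) - (\nabla F(\x_{t+1})-\nabla F(\x_t))$. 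One obtains it by expanding $A_{t+1}$, distributing the $(1-\beta)$ factors in the $\nabla f_{i_{t+1}}$ and $\nabla F$ terms, and then using $\nabla f_{i_{t+1}}(\x_t)=\nabla f_{i_{t+1}}(\x_{t+1})-(\nabla f_{i_{t+1}}(\x_{t+1})-\nabla f_{i_{t+1}}(\x_t))$ (and the analogous identity for $\nabla F$) to convert the SAG correction $g_{t+1}^{i_{t+1}}-\tfrac1n\sum_i g_{t+1}^i$, which is anchored at $\x_t$, into the quantity $\nabla F(\x_{t+1})-\z_{t+1}$ anchored at $\x_{t+1}$.

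Since $g_{t+1}$ is $\H_{t+1}$-measurable and $i_{t+1}$ is uniform, $\E_{i_{t+1}}[g_{t+1}^{i_{t+1}}]=\tfrac1n\sum_i g_{t+1}^i$ and $\E_{i_{t+1}}[\nabla f_{i_{t+1}}(\x)]=\nabla F(\x)$ for $\x\in\{\x_t,\x_{t+1}\}$, so both $B_{t+1}$ and $\nabla F(\x_{t+1})-\z_{t+1}$ have zero conditional mean, hence $\E_{i_{t+1}}[A_{t+1}]=0$. Expanding $\Norm{\v_{t+1}-\nabla F(\x_{t+1})}^2$ the cross term drops, leaving $(1-\beta)^2\Norm{\v_t-\nabla F(\x_t)}^2+\E_{i_{t+1}}\Norm{A_{t+1}}^2$. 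Using $\Norm{A_{t+1}}^2\le 2(1-\beta)^2\Norm{B_{t+1}}^2+2\beta^2\Norm{\nabla F(\x_{t+1})-\z_{t+1}}^2$, the bound $\E_{i_{t+1}}\Norm{B_{t+1}}^2\le\E_{i_{t+1}}\Norm{\nabla f_{i_{t+1}}(\x_{t+1})-\nabla f_{i_{t+1}}(\x_t)}^2\le L^2\Norm{\x_{t+1}-\x_t}^2=L^2\eta_t^2\Norm{\v_t}^2$ (variance is at most the second moment, then Assumption~\ref{ass:finite}), and $(1-\beta)^2\le 1-\beta$, one arrives at the one-step inequality
\begin{equation*}
    \E_{i_{t+1}}\Norm{\v_{t+1}-\nabla F(\x_{t+1})}^2 \le (1-\beta)\Norm{\v_t-\nabla F(\x_t)}^2 + 2L^2\eta_t^2\Norm{\v_t}^2 + 2\beta^2\,\E_{i_{t+1}}\Norm{\nabla F(\x_{t+1})-\z_{t+1}}^2 .
\end{equation*}

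To finish, multiply by $\eta_t$ (which is $\H_{t+1}$-measurable), rearrange to isolate $\eta_t\Norm{\v_t-\nabla F(\x_t)}^2$ with a factor $1/\beta$ on the telescoping difference, take total expectations, and sum over $t=1,\dots,T$. Because $\eta_t$ is non-increasing, $\E[\eta_t\Norm{\v_{t+1}-\nabla F(\x_{t+1})}^2]\ge\E[\eta_{t+1}\Norm{\v_{t+1}-\nabla F(\x_{t+1})}^2]$, so the telescoping sum is at most $\tfrac{\eta_1}{\beta}\E\Norm{\v_1-\nabla F(\x_1)}^2$, which vanishes since $\v_1$ is initialized as the exact full gradient $\nabla F(\x_1)$ (the finite-sum analogue of the initial large batch in Algorithm~\ref{alg:storm}). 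After dividing by $\beta$ the $2L^2\eta_t^2\Norm{\v_t}^2$ terms become $\tfrac{2L^2}{\beta}\eta_t^3\Norm{\v_t}^2$ and the $2\beta^2\eta_t\E[\cdot]$ terms become $2\beta\eta_t\E[\cdot]$, which is precisely the claimed inequality. The main obstacle is the opening identity for $A_{t+1}$: the regrouping must be arranged so that the SAG correction combines with the stochastic-gradient differences to give exactly $-\beta(\nabla F(\x_{t+1})-\z_{t+1})$ while the residual $B_{t+1}$ remains a genuinely mean-zero, $L$-smoothness-controlled term; everything afterwards is the routine STORM-style telescoping.
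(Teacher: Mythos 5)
Your proposal is correct and follows essentially the same route as the paper's proof: rewriting $\v_{t+1}=(1-\beta)\v_t+\beta\z_{t+1}+(1-\beta)(\nabla f_{i_{t+1}}(\x_{t+1})-\nabla f_{i_{t+1}}(\x_t))$, dropping the cross term with the conditionally mean-zero residual, bounding the smoothness-controlled term by $L^2\eta_t^2\Norm{\v_t}^2$, then multiplying by $\eta_t$, telescoping with the non-increasing learning rate, and using the full-batch initialization to eliminate the $\frac{\eta_1}{\beta}\Norm{\v_1-\nabla F(\x_1)}^2$ term. Your explicit $A_{t+1}$/$B_{t+1}$ decomposition is just a repackaging of the paper's expansion, so no substantive difference.
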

\begin{proof} According to the definition of $\z_t$, the estimator $\v_t$ can be expressed as 
\begin{align*}
    \v_t =(1-\beta) \v_{t-1}+ \beta \z_t +(1-\beta)\left(\nabla f_{i_t}(\x_t) - \nabla f_{i_t}(\x_{t-1})\right).
\end{align*}

Note that $\E_{i_{t+1}}\left[ \z_{t+1}\right] = \nabla F(\x_{t+1})$, and we have:
    \begin{align}\label{SSA}
    \begin{split}
        &\quad\ \E_{i_{t+1}}\left[\Norm{\nabla F(\x_{t+1}) -\v_{t+1}}^2\right]\\
        & = \E_{i_{t+1}}\left[\Norm{(1-\beta)\v_{t} + \beta \z_{t+1} + (1-\beta)\left(\nabla f_{i_{t+1}}(\x_{t+1}) - \nabla f_{i_{t+1}}(\x_{t}) \right) - \nabla F(\x_{t+1})}^2\right]\\
        & = \E_{i_{t+1}}\left[\|(1-\beta)(\v_t - \nabla F(\x_{t})) + \beta(\z_{t+1}-\nabla F(\x_{t+1})) \right. \\
        & \quad \left.+ (1-\beta)\left(\nabla f_{i_{t+1}}(\x_{t+1}) - \nabla f_{i_{t+1}}(\x_{t}) + \nabla F(\x_t) - \nabla F(\x_{t+1})  \right) \|^2 \right]\\
         & \leq  \Norm{(1-\beta)(\v_t - \nabla F(\x_{t}))}^2 + \E_{i_{t+1}}\left[ \left\| \beta(\z_{t+1}-\nabla F(\x_{t+1})) \right.\right.\\
         &\left.\left. \quad  
         + (1-\beta)\left(\nabla f_{i_{t+1}}(\x_{t+1}) - \nabla f_{i_{t+1}}(\x_{t}) + \nabla F(\x_t) - \nabla F(\x_{t+1})  \right)\right\|^2 \right]\\
         & \leq (1-\beta)^2 \Norm{\v_t - \nabla F(\x_{t})}^2 + 2\beta^2 \E_{i_{t+1}}\left[\Norm{\z_{t+1}-\nabla F(\x_{t+1})}^2\right]\\
         & \quad+ 2(1-\beta)^2\E_{i_{t+1}}\left[\Norm{\nabla f_{i_{t+1}}(\x_{t+1}) - \nabla f_{i_{t+1}}(\x_{t}) + \nabla F(\x_t) - \nabla F(\x_{t+1})}^2 \right]\\
         & \leq  (1-\beta)^2 \Norm{\v_t - \nabla F(\x_{t})}^2 + 2\beta^2 \E_{i_{t+1}}\left[\Norm{\z_{t+1}-\nabla F(\x_{t+1})}^2\right]\\
         &\quad + 2(1-\beta)^2 \E_{i_{t+1}}\left[\Norm{\nabla f_{i_{t+1}}(\x_{t+1}) - \nabla f_{i_{t+1}}(\x_{t})}^2 \right]\\
          & \leq  (1-\beta)^2 \Norm{\v_t - \nabla F(\x_{t})}^2 + 2\beta^2 \E_{i_{t+1}}\left[\Norm{\z_{t+1}-\nabla F(\x_{t+1})}^2\right]+ 2L^2 \Norm{\x_{t+1} -\x_{t}}^2 \\
         & \leq  (1-\beta) \Norm{\v_t - \nabla F(\x_{t})}^2 + 2\beta^2 \E_{i_{t+1}}\left[\Norm{\z_{t+1}-\nabla F(\x_{t+1})}^2\right] + 2L^2\eta_t^2 \Norm{\v_t}^2.
\end{split}
    \end{align}
    Rearrange the items and multiply the both sides by $\eta_t$, we can get the following:
    \begin{align*}
          \E\left[\eta_t\Norm{\v_t - \nabla F(\x_{t})}^2\right] \leq & \E\left[\frac{\eta_t}{\beta}\Norm{\v_t - \nabla F(\x_{t})}^2\right] - \E\left[\frac{\eta_t}{\beta}\Norm{\v_{t+1} - \nabla F(\x_{t+1})}^2\right] \\
         &\quad + 2\beta  \E\left[\eta_t \Norm{\z_{t+1}-\nabla F(\x_{t+1})}^2\right] + \frac{2L^2}{\beta}\E\left[\eta_t^3 \Norm{\v_t}^2 \right].
    \end{align*}
    Note that $\eta_t$ is non-increasing. By summing up, we have:
    \begin{align*}
        &\sum_{t=1}^{T} \E\left[\eta_t\Norm{\v_t - \nabla F(\x_{t})}^2\right] \\
        \leq & \E\left[\frac{\eta_1}{\beta}\Norm{\v_1 - \nabla F(\x_{1})}^2\right] +  2\beta\sum_{t=1}^T \E\left[\eta_t\Norm{\z_{t+1}-\nabla F(\x_{t+1})}^2\right]   + \frac{2\LL^2}{\beta}\sum_{t=1}^T\E\left[\eta_t^3 \Norm{\v_t}^2 \right].
    \end{align*}
Since we use a full batch in the first iteration, we can    finish the proof of this lemma.
\end{proof}
Next, we bound two terms in the above lemma.
\begin{lemma}\label{SAG} We can ensure that
    \begin{align*}
    2\beta \sum_{t=1}^T  \E\left[\eta_t\Norm{\nabla F(\x_{t+1}) -\z_{t+1}}^2\right] \leq 12 \LL^2\sum_{t=1}^T\E\left[\frac{\eta_t^3}{\beta}\Norm{\v_t}^2\right].
    \end{align*}
\end{lemma}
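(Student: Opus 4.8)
The plan is to first reduce the left-hand side to a sum of per-component ``staleness'' terms and then control each such term by unrolling a contracting linear recursion whose forgetting factor is supplied by the uniform sampling of $i_t$. \emph{Step 1 (reduction).} Let $\mathcal{F}_t$ be the $\sigma$-algebra generated by $i_1,\dots,i_t$, and write $Y_i := \nabla f_i(\x_{t+1}) - g_{t+1}^i$, so that $\z_{t+1} = Y_{i_{t+1}} + \frac1n\sum_{i=1}^n g_{t+1}^i$ and $\E_{i_{t+1}}[\z_{t+1}\mid\mathcal{F}_t] = \nabla F(\x_{t+1})$. Since $\z_{t+1} - \nabla F(\x_{t+1}) = Y_{i_{t+1}} - \E_{i_{t+1}}[Y_{i_{t+1}}\mid\mathcal{F}_t]$, the elementary bound ``variance $\le$ second moment'' (conditionally on $\mathcal{F}_t$) applied to $Y_{i_{t+1}}$ gives
\begin{align*}
\E_{i_{t+1}}\!\big[\Norm{\z_{t+1} - \nabla F(\x_{t+1})}^2 \mid \mathcal{F}_t\big]
&\le \frac1n\sum_{i=1}^n \Norm{\nabla f_i(\x_{t+1}) - g_{t+1}^i}^2 \\
&\le \frac{L^2}{n}\sum_{i=1}^n \Norm{\x_{t+1} - \x_{\sigma_i(t+1)}}^2 ,
\end{align*}
where $\sigma_i(t+1)\le t$ is the last step at which index $i$ was drawn, so that $g_{t+1}^i = \nabla f_i(\x_{\sigma_i(t+1)})$ by equation~(\ref{FS-g}), and the last inequality is Assumption~\ref{ass:finite}. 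It thus suffices to prove $\sum_{t=1}^T\E[\eta_t\Norm{\x_{t+1}-\x_{\sigma_i(t+1)}}^2] \le 4n^2\sum_{t=1}^T\E[\eta_t^3\Norm{\v_t}^2]$, uniformly over $i$.

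\emph{Step 2 (a contracting recursion).} Fix $i$ and put $R_t^i := \Norm{\x_t - \x_{\sigma_i(t)}}$, with $R_1^i = 0$ since $g_1^i=\nabla f_i(\x_1)$ (a full batch is used at $t=1$). From $\x_{t+1}=\x_t-\eta_t\v_t$ and equation~(\ref{FS-g}): if $i_t = i$ then $\sigma_i(t+1)=t$ and $R_{t+1}^i = \eta_t\Norm{\v_t}$, while if $i_t\ne i$ then $\sigma_i(t+1)=\sigma_i(t)$ and $R_{t+1}^i \le R_t^i + \eta_t\Norm{\v_t}$. Combining the two cases and using Young's inequality, for any $\gamma>0$,
\begin{align*}
(R_{t+1}^i)^2 \le (1+\gamma)\,\mathbf{1}\{i_t\ne i\}\,(R_t^i)^2 + \Big(1+\tfrac1\gamma\Big)\eta_t^2\Norm{\v_t}^2 .
\end{align*}
Multiplying by $\eta_t$ and using that $\eta_t$ is non-increasing (so $\eta_t(R_t^i)^2 \le \eta_{t-1}(R_t^i)^2$), the quantity $\Psi_t^i := \eta_{t-1}(R_t^i)^2$ (with $\eta_0:=\eta_1$, so $\Psi_1^i=0$) satisfies $\Psi_{t+1}^i \le (1+\gamma)\mathbf{1}\{i_t\ne i\}\Psi_t^i + (1+\tfrac1\gamma)\eta_t^3\Norm{\v_t}^2$. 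Now $\Psi_t^i$ is $\mathcal{F}_{t-1}$-measurable while $\mathbf{1}\{i_t\ne i\}$ is independent of $\mathcal{F}_{t-1}$ with mean $\tfrac{n-1}{n}$, so choosing $\gamma=\tfrac{1}{2(n-1)}$ (which makes $(1+\gamma)\tfrac{n-1}{n}=1-\tfrac1{2n}$ and $1+\tfrac1\gamma=2n-1\le 2n$) and taking expectations gives
\begin{align*}
\E[\Psi_{t+1}^i] \le \Big(1-\tfrac1{2n}\Big)\E[\Psi_t^i] + 2n\,\E\!\big[\eta_t^3\Norm{\v_t}^2\big], \qquad \Psi_1^i=0 .
\end{align*}

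\emph{Step 3 (summation and assembly).} Unrolling this stable recursion and summing the resulting geometric series, uniformly in $i$,
\begin{align*}
\sum_{t=1}^T\E\!\big[\eta_t(R_{t+1}^i)^2\big] = \sum_{t=1}^T\E[\Psi_{t+1}^i] \le 2n\cdot\frac{1}{1-(1-1/(2n))}\sum_{t=1}^T\E\!\big[\eta_t^3\Norm{\v_t}^2\big] = 4n^2\sum_{t=1}^T\E\!\big[\eta_t^3\Norm{\v_t}^2\big] ,
\end{align*}
which is the bound promised in Step~1. Plugging it into Step~1 and averaging over $i$,
\begin{align*}
2\beta\sum_{t=1}^T\E\!\big[\eta_t\Norm{\nabla F(\x_{t+1})-\z_{t+1}}^2\big] \le \frac{2\beta L^2}{n}\sum_{i=1}^n\sum_{t=1}^T\E\!\big[\eta_t(R_{t+1}^i)^2\big] \le 8\beta L^2 n^2\sum_{t=1}^T\E\!\big[\eta_t^3\Norm{\v_t}^2\big] ,
\end{align*}
and since $\beta=1/n$ the right-hand side is $8L^2\sum_{t=1}^T\E[\eta_t^3\Norm{\v_t}^2/\beta]\le 12L^2\sum_{t=1}^T\E[\eta_t^3\Norm{\v_t}^2/\beta]$, the claimed inequality (the slack between $8$ and $12$ absorbs a less tight splitting of the square or a different $\gamma$).

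The main obstacle is exactly Step~2: the delay $t+1-\sigma_i(t+1)$ of the SAG-style memory $g_{t+1}^i$ is random and, because $\eta_t$ depends adaptively on $\v_1,\dots,\v_t$, it is correlated with the step sizes, so one cannot simply say ``the delay is geometric with mean $n$'' and pull it out of the expectation. Expressing the memory error through $R_t^i$ (hence $\Psi_t^i$), which is $\mathcal{F}_{t-1}$-measurable and therefore independent of the fresh draw $i_t$, is precisely what makes the contraction factor $\tfrac{n-1}{n}$ available \emph{unconditionally}; the monotonicity bookkeeping $\eta_t\le\eta_{t-1}$ is the second ingredient, needed to convert the $\eta_t^2\Norm{\v_t}^2$ increments into the $\eta_t^3\Norm{\v_t}^2$ terms appearing on the right-hand side of the lemma.
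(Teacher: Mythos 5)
Your proof is correct and takes essentially the same route as the paper's: both arguments turn the variance-to-second-moment reduction for $\z_{t+1}$ into a per-step $(1-\tfrac{1}{2n})$-contraction of the SAG memory error, driven by the $1/n$ refresh probability of each slot, Young's inequality with parameter $\Theta(1/n)$, the monotonicity of $\eta_t$, and the exact full-batch initialization, and then sum to get $O(n^2L^2)\sum_t\eta_t^3\Norm{\v_t}^2 = O(L^2/\beta^2)\sum_t\eta_t^3\Norm{\v_t}^2$. The differences are only bookkeeping: you run the contraction on the per-component iterate staleness $\eta_{t-1}\Norm{\x_t-\x_{\sigma_i(t)}}^2$ (invoking smoothness once at the end and being more explicit about $\mathcal{F}_{t-1}$-measurability and the independence of $i_t$) and unroll a geometric series, whereas the paper contracts the averaged gradient-memory error $\tfrac1n\sum_i\eta_t\Norm{\nabla f_i(\x_t)-g_t^i}^2$ directly; your resulting constant $8\le 12$ is even slightly sharper, with only the trivial $n=1$ case (where the left-hand side vanishes) needing a separate remark since $\gamma=\tfrac{1}{2(n-1)}$ is then undefined.
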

\begin{proof}
    \begin{align}\label{fini}
    \begin{split}
        & \E_{i_{t+1}}\left[\eta_t\Norm{\nabla F(\x_{t+1}) -\z_{t+1}}^2\right] \\
        =&  \E_{i_{t+1}}\left[\eta_t\Norm{\nabla F(\x_{t+1}) -\nabla f_{i_{t+1}}(\x_{t+1}) + g_{t+1}^{i_{t+1}} - \frac{1}{n}\sum_{i=1}^n g_{t+1}^i}^2\right]\\
        =&  \E_{i_{t+1}}\left[\eta_t\Norm{\nabla f_{i_{t+1}}(\x_{t+1}) - g_{t+1}^{i_{t+1}} - \left( \nabla F(\x_{t+1}) - \frac{1}{n}\sum_{i=1}^n g_{t+1}^i\right)}^2\right]\\
        \leq &  \E_{i_{t+1}}\left[\eta_t\Norm{\nabla f_{i_{t+1}}(\x_{t+1}) - g_{t+1}^{i_{t+1}}}^2\right] \\
        = &  \frac{1}{n}\sum_{i=1}^{n} \eta_t\Norm{\nabla f_{i}(\x_{t+1}) - g_{t+1}^{i}}^2,
    \end{split}
    \end{align}
where the last equation is due to the fact that $i_{t+1}$ is randomly sample from $\{ 1,2,\cdots,n\}$.
Note that we also have that 
\begin{align*}
&\frac{1}{n}\sum_{i=1}^{n} \eta_t\Norm{\nabla f_{i}(\x_{t+1}) - g_{t+1}^{i}}^2=\E_{i_{t+1}}\left[\eta_t\Norm{\nabla f_{i_{t+1}}(\x_{t+1}) - g_{t+1}^{i_{t+1}}}^2\right] \\
          \leq &\E_{i_{t+1}}\left[\eta_t(1+2n)\Norm{\nabla f_{i_{t+1}}(\x_{t+1}) - \nabla f_{i_{t+1}}(\x_{t})}^2 + \eta_t(1+\frac{1}{2n})\Norm{\nabla f_{i_{t+1}}(\x_{t}) - g_{t+1}^{i_{t+1}}}^2\right]\\
        \leq & \E_{i_{t+1}}\left[(1+2n)\LL^2\eta_t^3\Norm{\v_t}^2 + \eta_t(1+\frac{1}{2n})\Norm{\nabla f_{i_{t+1}}(\x_{t}) - g_{t+1}^{i_{t+1}}}^2\right]\\
        \leq & \E_{i_{t+1}}\left[ (1+2n)\LL^2\eta_t^3\Norm{\v_t}^2 \right.\\
        &\quad \left.+ \eta_t(1+\frac{1}{2n})\left((1-\frac{1}{n})\Norm{\nabla f_{i_{t+1}}(\x_{t}) - g_{t}^{i_{t+1}}}^2+ \frac{1}{n}\Norm{\nabla f_{i_{t+1}}(\x_{t}) - \nabla f_{i_{t+1}}(\x_{t})}^2\right)\right]\\
        \leq & \E_{i_{t+1}}\left[3n\LL^2\eta_t^3\Norm{\v_t}^2 + \eta_t(1-\frac{1}{2n})\Norm{\nabla f_{i_{t+1}}(\x_{t}) - g_{t}^{i_{t+1}}}^2\right] \\
        \leq & 3n\LL^2\eta_t^3\Norm{\v_t}^2 + \left(1-\frac{1}{2n}\right)\frac{1}{n}\sum_{i=1}^n \eta_t\Norm{\nabla f_{i}(\x_{t}) - g_{t}^{i}}^2.
\end{align*}
That is to say, we can ensure that
\begin{align*}
\frac{1}{n}\sum_{i=1}^{n} \eta_{t+1}\Norm{\nabla f_{i}(\x_{t+1}) - g_{t+1}^{i}}^2
        \leq &\sum_{i=1}^{n} \eta_t\Norm{\nabla f_{i}(\x_{t+1}) - g_{t+1}^{i}}^2\\
        \leq &3n\LL^2\eta_t^3\Norm{\v_t}^2 + \left(1-\frac{1}{2n}\right)\frac{1}{n}\sum_{i=1}^n \eta_t\Norm{\nabla f_{i}(\x_{t}) - g_{t}^{i}}^2.
\end{align*}
By rearranging and summing up, we have
    \begin{align*}
&\frac{1}{2n}\sum_{t=1}^{T}\frac{1}{n}\sum_{i=1}^{n} \eta_t\Norm{\nabla f_{i}(\x_{t}) - g_{t}^{i}}^2
        \leq 3n\LL^2\sum_{t=1}^{T} \eta_t^3\Norm{\v_t}^2 +\frac{1}{n}\sum_{i=1}^n \eta_1 \Norm{\nabla f_{i}(\x_{1}) - g_{1}^{i}}^2.
\end{align*}
Since we use a full batch $n$ in the first iteration, the second term equals zero, and thus we obtain:
 \begin{align*}
&\sum_{t=1}^{T}\frac{1}{n}\sum_{i=1}^{n} \eta_t\Norm{\nabla f_{i}(\x_{t}) - g_{t}^{i}}^2
        \leq 6n^2\LL^2\sum_{t=1}^{T} \eta_t^3\Norm{\v_t}^2= \frac{6\LL^2}{\beta^2}\sum_{t=1}^{T} \eta_t^3\Norm{\v_t}^2,
\end{align*}
which leads to the result of this lemma.
\end{proof}

\begin{lemma}We have the following guarantee:
    \begin{align*}
    2\LL^2\sum_{t=1}^T \frac{\eta_t^3}{\beta} \Norm{\v_t}^2 \leq \frac{2\alpha}{1-\alpha}\left(\frac{2\LL^2}{1-3\alpha} \left(\frac{24-72\alpha}{1-\alpha}\right)^{\frac{1-3\alpha}{1-\alpha}} \right)^{\frac{1-\alpha}{2\alpha}} + \frac{1}{24}\left(\frac{1}{\sqrt{n}}\sum_{t=1}^T\Norm{\v_t}^2\right)^{1-\alpha}.
    \end{align*}
\end{lemma}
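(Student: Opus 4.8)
The plan is to substitute the explicit hyper-parameters $\eta_t = n^{-(1-\alpha)/2}\left(\sum_{i=1}^t \Norm{\v_i}^2\right)^{-\alpha}$ and $\beta = 1/n$, use the exponent identity $\tfrac{3(1-\alpha)}{2}-1 = \tfrac{1-3\alpha}{2}$ to get $\tfrac{\eta_t^3}{\beta} = n^{-(1-3\alpha)/2}\left(\sum_{i=1}^t \Norm{\v_i}^2\right)^{-3\alpha}$, and hence
\begin{align*}
2L^2\sum_{t=1}^T \frac{\eta_t^3}{\beta}\Norm{\v_t}^2 = \frac{2L^2}{n^{(1-3\alpha)/2}}\sum_{t=1}^T \frac{\Norm{\v_t}^2}{\left(\sum_{i=1}^t \Norm{\v_i}^2\right)^{3\alpha}}.
\end{align*}
Since $0<\alpha<1/3$ we have $0<3\alpha<1$, so Lemma~\ref{lem1} applies with $c_i = \Norm{\v_i}^2$ and exponent $3\alpha$, bounding the sum by $\frac{1}{1-3\alpha}\left(\sum_{t=1}^T \Norm{\v_t}^2\right)^{1-3\alpha}$. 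Writing $S = \tfrac{1}{\sqrt n}\sum_{t=1}^T \Norm{\v_t}^2$ and using $n^{-(1-3\alpha)/2} = (\sqrt n)^{-(1-3\alpha)}$, this yields $2L^2\sum_t \tfrac{\eta_t^3}{\beta}\Norm{\v_t}^2 \le \frac{2L^2}{1-3\alpha} S^{1-3\alpha}$.

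The last step is Young's inequality in the form $xy \le \frac{2\alpha}{1-\alpha}x^{(1-\alpha)/(2\alpha)} + \frac{1-3\alpha}{1-\alpha}y^{(1-\alpha)/(1-3\alpha)}$, whose exponents are conjugate because $\frac{2\alpha}{1-\alpha}+\frac{1-3\alpha}{1-\alpha}=1$. I would choose $y = (S/\mu)^{1-3\alpha}$ and $x = \frac{2L^2}{1-3\alpha}\mu^{1-3\alpha}$ so that $xy = \frac{2L^2}{1-3\alpha}S^{1-3\alpha}$, and then fix the free scale $\mu$ by demanding $\frac{1-3\alpha}{1-\alpha}\mu^{-(1-\alpha)} = \frac{1}{24}$, i.e. $\mu^{1-\alpha} = \frac{24(1-3\alpha)}{1-\alpha}$. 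This makes the $S^{1-\alpha}$ contribution exactly $\frac{1}{24}S^{1-\alpha}$, while the constant term becomes $\frac{2\alpha}{1-\alpha}\left(\frac{2L^2}{1-3\alpha}\left(\frac{24-72\alpha}{1-\alpha}\right)^{(1-3\alpha)/(1-\alpha)}\right)^{(1-\alpha)/(2\alpha)}$, using $24(1-3\alpha)=24-72\alpha$; this is precisely the claimed bound.

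There is no genuine difficulty here — the argument is a direct chain of (i) substitution, (ii) one application of Lemma~\ref{lem1}, and (iii) one rescaled Young's inequality. The only thing to watch is the bookkeeping: confirming the exponent identity $\tfrac{3(1-\alpha)}{2}-1=\tfrac{1-3\alpha}{2}$, noting that $3\alpha<1$ is exactly what makes Lemma~\ref{lem1} legal, and carrying the scale $\mu$ carefully through Young's inequality so that the final coefficients match $\tfrac{1}{24}$ and $\tfrac{24-72\alpha}{1-\alpha}$ exactly. One could equally well write the Young step with the substituted exponents in place rather than via generic $x,y$; the computation is the same.
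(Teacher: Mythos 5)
Your proposal is correct and follows essentially the same route as the paper: substitute $\eta_t$ and $\beta$ to reduce the left-hand side to $\frac{2L^2}{n^{(1-3\alpha)/2}}\sum_t \Norm{\v_t}^2\big(\sum_{i\le t}\Norm{\v_i}^2\big)^{-3\alpha}$, apply Lemma~\ref{lem1} with exponent $3\alpha<1$, and then use the weighted Young inequality $xy \leq \frac{2\alpha}{1-\alpha} x^{\frac{1-\alpha}{2\alpha}} + \frac{1-3\alpha}{1-\alpha} y^{\frac{1-\alpha}{1-3\alpha}}$ with the scale chosen so the $S^{1-\alpha}$ coefficient is exactly $\frac{1}{24}$. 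Your free parameter $\mu$ is just a parametrization of the paper's explicit split of the factor $\left(\frac{24-72\alpha}{1-\alpha}\right)^{\frac{1-3\alpha}{1-\alpha}}\left(\frac{1-\alpha}{24-72\alpha}\right)^{\frac{1-3\alpha}{1-\alpha}}$, so the two arguments coincide.
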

\begin{proof} 
    \begin{align*}
        &2\LL^2 \sum_{t=1}^T \frac{\eta_t^3}{\beta} \Norm{\v_t}^2\\
        = & 2\LL^2n \sum_{t=1}^T \eta_t^3 \Norm{\v_t}^2
        = 2\LL^2 \sum_{t=1}^T \frac{\Norm{\v_t}^2}{n^{\frac{1-3\alpha}{2}}\left(\sum_{i=1}^t \Norm{\v_i}^2\right)^{3\alpha}}\\
        \leq &  \frac{2\LL^2}{1-3\alpha}\left(\frac{1}{\sqrt{n}}\sum_{t=1}^T\Norm{\v_t}^2\right)^{1-3\alpha}\\
        =& \frac{2\LL^2}{1-3\alpha} \left(\frac{24-72\alpha}{1-\alpha}\right)^{\frac{1-3\alpha}{1-\alpha}}\left(\frac{1-\alpha}{24-72\alpha}\right)^{\frac{1-3\alpha}{1-\alpha}}\left(\frac{1}{\sqrt{n}}\sum_{t=1}^T\Norm{\v_t}^2\right)^{1-3\alpha}\\
        \leq & \frac{2\alpha}{1-\alpha}\left(\frac{2\LL^2}{1-3\alpha} \left(\frac{24-72\alpha}{1-\alpha}\right)^{\frac{1-3\alpha}{1-\alpha}} \right)^{\frac{1-\alpha}{2\alpha}}+ \frac{1}{24}\left(\frac{1}{\sqrt{n}}\sum_{t=1}^T\Norm{\v_t}^2\right)^{1-\alpha},
    \end{align*}
where the last inequality employs Young's inequality, such that $xy \leq \frac{2\alpha}{1-\alpha} x^{\frac{1-\alpha}{2\alpha}} + \frac{1-3\alpha}{1-\alpha} y^{\frac{1-\alpha}{1-3\alpha}}$ for positive $x, y$. 
\end{proof}

\begin{lemma} We can ensure the following guarantee:
    \begin{align*}
    \E\left[\sum_{t=1}^T{\eta_t^2 \LL}\Norm{\v_t}^2\right] \leq \frac{\alpha}{1-\alpha}\left(\frac{\LL}{1-2\alpha} \left(\frac{4-8\alpha}{1-\alpha}\right)^{\frac{1-2\alpha}{1-\alpha}}\right)^{\frac{1-\alpha}{\alpha}} + \frac{1}{4}\E\left[\left(\frac{1}{\sqrt{n}}\sum_{t=1}^T\Norm{\v_t}^2\right)^{1-\alpha}\right]
    \end{align*}
\end{lemma}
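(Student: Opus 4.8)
The plan is to prove the inequality pathwise for every realization of the iterates and sampled indices, and to pass to expectations only at the very end; this is legitimate because $\eta_t$ is a deterministic function of $\Norm{\v_1}^2,\dots,\Norm{\v_t}^2$ and the whole chain of inequalities below holds surely. Substituting $\eta_t = n^{-(1-\alpha)/2}\big(\sum_{i=1}^t \Norm{\v_i}^2\big)^{-\alpha}$ turns the left-hand side into $\LL\, n^{-(1-\alpha)}\sum_{t=1}^T \Norm{\v_t}^2\big/\big(\sum_{i=1}^t \Norm{\v_i}^2\big)^{2\alpha}$. Since $0<\alpha<1/3$ forces $2\alpha\in(0,1)$, I would invoke Lemma~\ref{lem1} with $c_i=\Norm{\v_i}^2$ and exponent $2\alpha$ to obtain $\sum_{t=1}^T \Norm{\v_t}^2\big/\big(\sum_{i=1}^t \Norm{\v_i}^2\big)^{2\alpha}\le \frac{1}{1-2\alpha}\big(\sum_{t=1}^T \Norm{\v_t}^2\big)^{1-2\alpha}$.

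Next I would rebalance the powers of $n$. Writing $1-\alpha=\tfrac12+\tfrac{1-2\alpha}{2}$ gives $n^{-(1-\alpha)}\big(\sum_{t=1}^T\Norm{\v_t}^2\big)^{1-2\alpha} = n^{-1/2}\big(\tfrac{1}{\sqrt n}\sum_{t=1}^T\Norm{\v_t}^2\big)^{1-2\alpha}\le \big(\tfrac{1}{\sqrt n}\sum_{t=1}^T\Norm{\v_t}^2\big)^{1-2\alpha}$, using $n\ge 1$. Hence $\sum_{t=1}^T \eta_t^2\LL\Norm{\v_t}^2\le \frac{\LL}{1-2\alpha}\big(\tfrac{1}{\sqrt n}\sum_{t=1}^T\Norm{\v_t}^2\big)^{1-2\alpha}$, which is exactly the finite-sum analogue of the intermediate bound appearing in the ``Bounding $\rB$'' step of Theorem~\ref{thm:main_0}, with $T^{1/3}$ replaced by $\sqrt n$.

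It then remains to trade the exponent $1-2\alpha$ for $1-\alpha$. I would split $\frac{\LL}{1-2\alpha}\big(\tfrac{1}{\sqrt n}\sum_t\Norm{\v_t}^2\big)^{1-2\alpha}=x\,y$ with $x=\frac{\LL}{1-2\alpha}\big(\frac{4-8\alpha}{1-\alpha}\big)^{(1-2\alpha)/(1-\alpha)}$ and $y=\big(\frac{1-\alpha}{4-8\alpha}\big)^{(1-2\alpha)/(1-\alpha)}\big(\tfrac{1}{\sqrt n}\sum_t\Norm{\v_t}^2\big)^{1-2\alpha}$, and apply Young's inequality $x\,y\le\frac{\alpha}{1-\alpha}x^{(1-\alpha)/\alpha}+\frac{1-2\alpha}{1-\alpha}y^{(1-\alpha)/(1-2\alpha)}$, which is valid since $\frac{\alpha}{1-\alpha}+\frac{1-2\alpha}{1-\alpha}=1$. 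The first term is precisely $\frac{\alpha}{1-\alpha}\big(\frac{\LL}{1-2\alpha}\big(\frac{4-8\alpha}{1-\alpha}\big)^{(1-2\alpha)/(1-\alpha)}\big)^{(1-\alpha)/\alpha}$, while $y^{(1-\alpha)/(1-2\alpha)}=\frac{1-\alpha}{4-8\alpha}\big(\tfrac{1}{\sqrt n}\sum_t\Norm{\v_t}^2\big)^{1-\alpha}$, so the second term collapses to $\frac{1-2\alpha}{4-8\alpha}\big(\tfrac{1}{\sqrt n}\sum_t\Norm{\v_t}^2\big)^{1-\alpha}=\frac14\big(\tfrac{1}{\sqrt n}\sum_t\Norm{\v_t}^2\big)^{1-\alpha}$. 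Taking expectations of both sides then yields the claimed bound.

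The argument is essentially mechanical; structurally it is a carbon copy of the ``Bounding $\rB$'' computation in the proof of Theorem~\ref{thm:main_0}. The only points requiring care are: (i) checking $2\alpha<1$ so that Lemma~\ref{lem1} applies, which is the sole place the hypothesis $\alpha<1/3$ is invoked here; and (ii) choosing the constants in the Young's split so that the coefficient of $\big(\tfrac{1}{\sqrt n}\sum_t\Norm{\v_t}^2\big)^{1-\alpha}$ comes out to exactly $1/4$ — this is precisely what dictates the factor $4-8\alpha=4(1-2\alpha)$ in the denominator appearing in the statement. I do not anticipate any genuine obstacle.
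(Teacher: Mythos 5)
Your proposal is correct and is essentially the same argument as the paper's: substitute the definition of $\eta_t$, apply Lemma~\ref{lem1} with exponent $2\alpha$ (valid since $\alpha<1/3$), use $n\geq 1$ to absorb the extra power of $n$, and then apply Young's inequality with precisely the split constant $\frac{4-8\alpha}{1-\alpha}$ so that the coefficient of $\bigl(\frac{1}{\sqrt{n}}\sum_{t=1}^T\Norm{\v_t}^2\bigr)^{1-\alpha}$ equals $\frac{1}{4}$. As a side note, your first displayed identity has the correct factor $\LL$ where the paper's derivation contains a harmless typo ($\LL^2$).
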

\begin{proof}
    \begin{align*}
        \sum_{t=1}^T\eta_t^2 \LL\Norm{\v_t}^2
        = & {\LL} \sum_{t=1}^T\frac{\Norm{\v_t}^2}{n^{1-\alpha}\left(\sum_{i=1}^t \Norm{\v_i}^2\right)^{2\alpha}}\\
        \leq & \frac{\LL}{1-2\alpha}\frac{1}{n^{1-\alpha}}\left(\sum_{t=1}^T\Norm{\v_t}^2\right)^{1-2\alpha}\\
        \leq & \frac{\LL}{1-2\alpha}\left(\frac{1}{\sqrt{n}}\sum_{t=1}^T\Norm{\v_t}^2\right)^{1-2\alpha}\\
        =& \frac{\LL}{1-2\alpha} \left(\frac{4-8\alpha}{1-\alpha}\right)^{\frac{1-2\alpha}{1-\alpha}}\left(\frac{1-\alpha}{4-8\alpha}\right)^{\frac{1-2\alpha}{1-\alpha}}\left(\frac{1}{\sqrt{n}}\sum_{t=1}^T\Norm{\v_t}^2\right)^{1-2\alpha}\\
        \leq& \frac{\alpha}{1-\alpha}\left(\frac{\LL}{1-2\alpha} \left(\frac{4-8\alpha}{1-\alpha}\right)^{\frac{1-2\alpha}{1-\alpha}}\right)^{\frac{1-\alpha}{\alpha}}+\frac{1}{4}\left(\frac{1}{\sqrt{n}}\sum_{t=1}^T\Norm{\v_t}^2\right)^{1-\alpha},
    \end{align*}
\end{proof}
where the last inequality employs Young's inequality, such that $xy \leq \frac{\alpha}{1-\alpha} x^{\frac{1-\alpha}{\alpha}} + \frac{1-2\alpha}{1-\alpha} y^{\frac{1-\alpha}{1-2\alpha}}$ for positive $x, y$.
Combing all these, we have already proven that 
\begin{align*}
    \E\left[\sum_{t=1}^T {\eta_t}\Norm{\v_t}^2\right] \geq \E\left[\left(\frac{1}{\sqrt{n}}\sum_{t=1}^T \Norm{\v_t}^2\right)^{1-\alpha} \right],\\
    \sum_{t=1}^T {\eta_t}\Norm{\v_t}^2  \leq C_3+\frac{3}{4}\E\left[\left(\frac{1}{\sqrt{n}}\sum_{t=1}^T\Norm{\v_t}^2\right)^{1-\alpha}\right],
\end{align*}
where 
\begin{align*}
    C_3 = &2\Delta_F +\frac{14\alpha}{1-\alpha}\left(\frac{2\LL^2}{1-3\alpha} \left(\frac{24-72\alpha}{1-\alpha}\right)^{\frac{1-3\alpha}{1-\alpha}} \right)^{\frac{1-\alpha}{2\alpha}}\\
    &+ \frac{\alpha}{1-\alpha}\left(\frac{\LL}{1-2\alpha} \left(\frac{4-8\alpha}{1-\alpha}\right)^{\frac{1-2\alpha}{1-\alpha}}\right)^{\frac{1-\alpha}{\alpha}}.
\end{align*}

So we can know that:
\begin{align*}
    \E\left[\left(\frac{1}{\sqrt{n}}\sum_{t=1}^T\Norm{\v_t}^2\right)^{1-\alpha}\right] \leq 4C_3.
\end{align*}

which indicate that
\begin{align*}
    \E\left[\left(\frac{1}{T}\sum_{t=1}^T\Norm{\v_t}^2\right)^{1-\alpha}\right] \leq 4C_3 \left(\frac{\sqrt{n}}{T} \right)^{1-\alpha}.
\end{align*}
as well as
\begin{align*}
\E\left[\frac{1}{T}\sum_{t=1}^T\Norm{\v_t}\right] \leq (4C_3)^{\frac{1}{2(1-\alpha)}} \cdot \frac{n^{1/4}}{T^{1/2}}. 
\end{align*}
To finish the proof, we also have to show the following lemma.
\begin{lemma}
    \begin{align*}
    &\E\left[\sum_{t=1}^T \Norm{\nabla F(\x_t) -\v_t}^2\right] \\
    \leq &\frac{\alpha \sqrt{n}}{1-\alpha} \left(\frac{14\LL^2 }{1-2\alpha} \left(\frac{2-4\alpha}{(1-\alpha)L}\right)^{\frac{1-2\alpha}{1-\alpha}}\right)^{\frac{1-\alpha}{\alpha}}+ \frac{n^{\frac{\alpha}{2}}L}{2}\E\left[\left(\sum_{t=1}^T\Norm{\v_t}^2\right)^{1-\alpha}\right].
\end{align*}
\end{lemma}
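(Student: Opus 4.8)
The plan is to follow the template already used for Lemma~\ref{lem3} in the stochastic case, but adapted to the finite-sum estimator and its SAG correction. First I would return to the one-step error recursion that was derived (in weighted form) in equation~(\ref{SSA}),
\begin{align*}
\E\left[\Norm{\nabla F(\x_{t+1}) -\v_{t+1}}^2\right] \leq (1-\beta)\Norm{\v_t - \nabla F(\x_t)}^2 + 2\beta^2 \E\left[\Norm{\z_{t+1} - \nabla F(\x_{t+1})}^2\right] + 2L^2 \eta_t^2 \Norm{\v_t}^2,
\end{align*}
but now divide by $\beta$ instead of multiplying by $\eta_t$. Rearranging expresses $\Norm{\v_t - \nabla F(\x_t)}^2$ as a telescoping difference $\frac1\beta(\Norm{\v_t - \nabla F(\x_t)}^2 - \E[\Norm{\v_{t+1} - \nabla F(\x_{t+1})}^2])$ plus $2\beta\,\E[\Norm{\z_{t+1} - \nabla F(\x_{t+1})}^2]$ plus $\frac{2L^2}{\beta}\eta_t^2\Norm{\v_t}^2$. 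Summing over $t=1,\ldots,T$ and using that the first iteration uses a full batch of all $n$ components, so $\Norm{\v_1 - \nabla F(\x_1)}^2 = 0$, yields
\begin{align*}
\sum_{t=1}^T \E\left[\Norm{\v_t - \nabla F(\x_t)}^2\right] \leq 2\beta\sum_{t=1}^T\E\left[\Norm{\z_{t+1} - \nabla F(\x_{t+1})}^2\right] + \frac{2L^2}{\beta}\sum_{t=1}^T \E\left[\eta_t^2\Norm{\v_t}^2\right].
\end{align*}

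Second, I would control the SAG term $2\beta\sum_t \E[\Norm{\z_{t+1} - \nabla F(\x_{t+1})}^2]$ by an unweighted analogue of Lemma~\ref{SAG}. The argument in Lemma~\ref{SAG} establishes, before the $\eta_t$ is attached, the recursion $\frac1n\sum_i \Norm{\nabla f_i(\x_{t+1}) - g_{t+1}^i}^2 \leq 3nL^2\eta_t^2\Norm{\v_t}^2 + (1-\tfrac1{2n})\tfrac1n\sum_i\Norm{\nabla f_i(\x_t) - g_t^i}^2$, where the $\tfrac1{2n}$ contraction comes from splitting the cross term by Young's inequality with weight $2n$ and the table update in equation~(\ref{FS-g}) contributes the $1-\tfrac1n$; the $i_t$ expectation enters exactly as in equation~(\ref{fini}). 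Rearranging, summing, using the full-batch initialization $g_1^i=\nabla f_i(\x_1)$ (so the boundary term vanishes and the $\tfrac1{2n}$ factor produces an $n^2$ blow-up), and combining with $\E_{i_{t+1}}[\Norm{\nabla F(\x_{t+1}) - \z_{t+1}}^2] \leq \tfrac1n\sum_i\Norm{\nabla f_i(\x_{t+1}) - g_{t+1}^i}^2$ and $\beta=1/n$ gives $2\beta\sum_t\E[\Norm{\z_{t+1} - \nabla F(\x_{t+1})}^2] \leq \frac{12L^2}{\beta}\sum_t\E[\eta_t^2\Norm{\v_t}^2]$. Added to the previous display this yields $\sum_t\E[\Norm{\v_t - \nabla F(\x_t)}^2] \leq \frac{14L^2}{\beta}\sum_t\E[\eta_t^2\Norm{\v_t}^2]$.

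Third, I would substitute $\eta_t^2 = n^{-(1-\alpha)}(\sum_{i\leq t}\Norm{\v_i}^2)^{-2\alpha}$ and $\beta^{-1}=n$, turning the bound into $14L^2 n^{\alpha}\sum_t \Norm{\v_t}^2/(\sum_{i\leq t}\Norm{\v_i}^2)^{2\alpha}$, and apply Lemma~\ref{lem1} with $c_i=\Norm{\v_i}^2$ and exponent $2\alpha<1$ to get $\frac{14L^2 n^{\alpha}}{1-2\alpha}(\sum_t\Norm{\v_t}^2)^{1-2\alpha}$. Finally I would write $(\sum_t\Norm{\v_t}^2)^{1-2\alpha} = \lambda^{-(1-2\alpha)}(\lambda\sum_t\Norm{\v_t}^2)^{1-2\alpha}$ with $\lambda = \big((1-\alpha)n^{\alpha/2}L/(2(1-2\alpha))\big)^{1/(1-\alpha)}$ and invoke Young's inequality $xy \leq \frac{\alpha}{1-\alpha}x^{\frac{1-\alpha}{\alpha}} + \frac{1-2\alpha}{1-\alpha}y^{\frac{1-\alpha}{1-2\alpha}}$; the choice of $\lambda$ is exactly what makes the $y$-term equal $\frac{n^{\alpha/2}L}{2}(\sum_t\Norm{\v_t}^2)^{1-\alpha}$, while in the $x$-term the $n$-exponent is $\frac{\alpha}{2(1-\alpha)}\cdot\frac{1-\alpha}{\alpha}=\frac12$, the identity $2-4\alpha=2(1-2\alpha)$ produces the factor $(\frac{2-4\alpha}{(1-\alpha)L})^{\frac{1-2\alpha}{1-\alpha}}$, and the constant collapses to $\frac{\alpha\sqrt n}{1-\alpha}(\frac{14L^2}{1-2\alpha}(\frac{2-4\alpha}{(1-\alpha)L})^{\frac{1-2\alpha}{1-\alpha}})^{\frac{1-\alpha}{\alpha}}$, which is the claimed bound.

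I expect the main obstacle to be the second step, the unweighted SAG recursion: one has to track the randomness of $i_t$ jointly with the table update in equation~(\ref{FS-g}), tune the Young weight ($2n$) so the expansion error from $\Norm{\nabla f_{i}(\x_{t+1})-\nabla f_{i}(\x_t)}^2$ does not swamp the $\tfrac1{2n}$ contraction, and handle the time shift between $\z_{t+1}$/$g_{t+1}$ and the summation index so that the resulting $n^2$ factor is absorbed cleanly into the full-batch initialization. The remaining steps — the telescoping, the application of Lemma~\ref{lem1}, and the final rescaled Young's inequality — are routine algebra of the same kind carried out in the proof of Theorem~\ref{thm:main_0}.
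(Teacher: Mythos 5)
Your proposal follows essentially the same route as the paper: the paper's own proof starts from exactly the unweighted telescoped recursion $\sum_{t}\E[\Norm{\nabla F(\x_t)-\v_t}^2]\leq 2\beta\sum_t\E[\Norm{\nabla F(\x_{t+1})-\z_{t+1}}^2]+\tfrac{2L^2}{\beta}\sum_t\E[\eta_t^2\Norm{\v_t}^2]$ combined with the unweighted analogue of the SAG lemma to reach $14nL^2\sum_t\E[\eta_t^2\Norm{\v_t}^2]$, and then applies Lemma~\ref{lem1} and the same rescaled Young's inequality, whose scaling factor is precisely your $\lambda$ and reproduces the stated constants (including the $\sqrt{n}$ from the exponent bookkeeping you verified). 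The only caveat, shared with the paper's own write-up, is the small bookkeeping in the time-shifted SAG sum (the index runs over $g_{t+1}$ while the contraction controls $\sum_t A_t$), which affects the absolute constant but not the structure of the bound.
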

\begin{proof}
    According to the previous proof, we know that:
    \begin{align*}
        &\sum_{t=1}^T \E\left[\Norm{\nabla F(\x_t) -\v_t}^2\right]\\ \leq &2\beta \sum_{t=1}^T \E\left[ \Norm{\nabla F(\x_{t+1}) -\z_{t+1}}^2\right] + 2\LL^2 \sum_{t=1}^T \E\left[\frac{\eta_t^2}{\beta}\Norm{\v_t}^2\right] \leq 14n\LL^2 \sum_{t=1}^T\E\left[\eta_t^2\Norm{\v_t}^2\right].
    \end{align*}
    Also, we can deduce that:
    \begin{align*}
        14n\LL^2 \sum_{t=1}^T\eta_t^2\Norm{\v_t}^2
        = & 14\LL^2 n^\alpha \sum_{t=1}^T\frac{\Norm{\v_t}^2}{\left(\sum_{i=1}^t\Norm{\v_i}^2\right)^{2\alpha}}\\
        \leq & \frac{14\LL^2 n^\alpha}{1-2\alpha}\left(\sum_{t=1}^T\Norm{\v_t}^2\right)^{1-2\alpha}\\
        = & \frac{14\LL^2 n^\alpha}{1-2\alpha} \left(\frac{2-4\alpha}{(1-\alpha)n^{\frac{\alpha}{2}}L}\right)^{\frac{1-2\alpha}{(1-\alpha)}}\left(\frac{(1-\alpha)n^{\frac{\alpha}{2}}L}{2-4\alpha}\right)^{\frac{1-2\alpha}{1-\alpha}}\left(\sum_{t=1}^T\Norm{\v_t}^2\right)^{1-2\alpha}\\
        \leq & \frac{\alpha}{1-\alpha} \left(\frac{14\LL^2 n^\alpha}{1-2\alpha} \left(\frac{2-4\alpha}{(1-\alpha)n^{\frac{\alpha}{2}}L}\right)^{\frac{1-2\alpha}{1-\alpha}}\right)^{\frac{1-\alpha}{\alpha}}+ \frac{n^{\frac{\alpha}{2}}L}{2}\left(\sum_{t=1}^T\Norm{\v_t}^2\right)^{1-\alpha}
    \end{align*}
    where the last inequality employs Young's inequality, such that $xy \leq \frac{\alpha}{1-\alpha} x^{\frac{1-\alpha}{\alpha}} + \frac{1-2\alpha}{1-\alpha} y^{\frac{1-\alpha}{1-2\alpha}}$ for positive $x, y$.
\end{proof}
As a result, we can ensure that
 \begin{align*}
    \frac{1}{T}\sum_{t=1}^T \E\left[\Norm{\nabla F(\x_t) }\right]  \leq & \frac{1}{T}\E\left[\sum_{t=1}^T  \Norm{\v_t} \right]+  \frac{1}{T}\left[\sum_{t=1}^T\Norm{\nabla f(\x_t) -\v_t} \right] \\
    \leq & \frac{n^{1/4}}{T^{1/2}}\left(  \left(\frac{14\LL^2 }{1-2\alpha} \left(\frac{2-4\alpha}{(1-\alpha)L}\right)^{\frac{1-2\alpha}{1-\alpha}}\right)^{\frac{1-\alpha}{2\alpha}}+\sqrt{2C_3L} + (4C_3)^{\frac{1}{2(1-\alpha)}}\right)\\
=&\mathcal{O}\left(\left(\Delta_F^{\frac{1}{2(1-\alpha)}} + L^{\frac{1}{2\alpha}}\right)\frac{n^{1/4}}{T^{1/2}}\right).
\end{align*}
\section{Proof of Theorem~\ref{T4}}
The analysis is very similar to that of Theorem~\ref{thm:main_2}, and the difference only appears in Lemma~\ref{SAG}. For this new method, we can also prove the same lemma:
\begin{lemma}\label{SVRG+++}
    \begin{align*}
    2\beta \sum_{t=1}^T  \E\left[\eta_t\Norm{\nabla F(\x_{t+1}) -\z_{t+1}}^2\right] \leq 12 \LL^2\sum_{t=1}^T\E\left[\frac{\eta_t^3}{\beta}\Norm{\v_t}^2\right].
    \end{align*}
\end{lemma}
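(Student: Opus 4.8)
The plan is to have Lemma~\ref{SVRG+++} play the same role in the SVRG-type analysis that Lemma~\ref{SAG} plays for Algorithm~\ref{alg:fs}, but to exploit that the estimator~(\ref{equ:svrg}) keeps a single, periodically refreshed reference point $\x_\tau$ rather than a full table of past gradients; this lets me replace the contraction/telescoping argument of Lemma~\ref{SAG} by a direct smoothness estimate. First I would match~(\ref{equ:svrg}) with the decomposition $\v_t = (1-\beta)\v_{t-1} + \beta\z_t + (1-\beta)(\nabla f_{i_t}(\x_t) - \nabla f_{i_t}(\x_{t-1}))$ used in the reduction lemma of the proof of Theorem~\ref{thm:main_2}; this forces $\z_t = \nabla f_{i_t}(\x_t) - \nabla f_{i_t}(\x_\tau) + \nabla f(\x_\tau)$. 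Because $\nabla f(\x_\tau)$ is the \emph{exact} full gradient at the current checkpoint, $\E_{i_t}[\z_t] = \nabla F(\x_t)$, and unbiasedness of $\z_t$ is the only property of $\z_t$ that reduction lemma actually uses, so it applies verbatim with this $\z_t$. It therefore suffices to bound $2\beta\sum_{t=1}^T\E[\eta_t\Norm{\nabla F(\x_{t+1}) - \z_{t+1}}^2]$.

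Next I would condition on the history through $\x_{t+1}$: since $\x_{t+1} = \x_t - \eta_t\v_t$ depends only on $i_1,\dots,i_t$, it is fixed before $i_{t+1}$ is sampled. Writing $X_i := \nabla f_i(\x_{t+1}) - \nabla f_i(\x_\tau)$ and using $\nabla f(\x_\tau) = \frac1n\sum_{i=1}^n\nabla f_i(\x_\tau)$ together with $\nabla F(\x_{t+1}) = \frac1n\sum_{i=1}^n\nabla f_i(\x_{t+1})$, one gets $\z_{t+1} - \nabla F(\x_{t+1}) = X_{i_{t+1}} - \E_{i_{t+1}}[X_{i_{t+1}}]$. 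Removing the mean only decreases the second moment, so $\E_{i_{t+1}}[\Norm{\z_{t+1} - \nabla F(\x_{t+1})}^2] \le \frac1n\sum_{i=1}^n\Norm{\nabla f_i(\x_{t+1}) - \nabla f_i(\x_\tau)}^2 \le L^2\Norm{\x_{t+1} - \x_\tau}^2$ by Assumption~\ref{ass:finite}. Since a full-batch checkpoint is taken every $I = n$ steps, $\x_{t+1} - \x_\tau = -\sum_{j=\tau}^{t}\eta_j\v_j$ has at most $n$ summands, so Cauchy--Schwarz gives $\Norm{\x_{t+1}-\x_\tau}^2 \le n\sum_{j=\tau}^{t}\eta_j^2\Norm{\v_j}^2$.

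Finally I would assemble the pieces. Multiplying by $\eta_t$ and using that $\eta_t$ is non-increasing (so $\eta_t\eta_j^2 \le \eta_j^3$ whenever $j\le t$), then summing over $t$ and noting that for each fixed index $j$ there are at most $n$ values of $t$ whose window $[\tau,t]$ contains $j$, I obtain $\sum_{t=1}^T\E[\eta_t\Norm{\nabla F(\x_{t+1}) - \z_{t+1}}^2] \le L^2 n^2\sum_{t=1}^T\E[\eta_t^3\Norm{\v_t}^2]$. Multiplying by $2\beta$ and substituting $\beta = 1/n$ turns $2\beta L^2 n^2$ into $2L^2 n = 2L^2/\beta$, which is bounded by $12L^2/\beta$, yielding the stated inequality a fortiori.

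I expect the only delicate point to be the last bookkeeping step: confirming that the checkpoint schedule truly forces each window $[\tau,t]$ to have length at most $n$ and each index $j$ to be counted at most $n$ times, and checking that the initial full-batch iteration together with the $t\mapsto t+1$ index shift leaves no stray boundary term. Everything else is a one-line smoothness-plus-Cauchy--Schwarz estimate — strictly simpler than the $(1-\tfrac{1}{2n})$-contraction telescoping argument of Lemma~\ref{SAG} that it replaces, which is exactly why the SVRG variant avoids storing past gradients without loss in the rate.
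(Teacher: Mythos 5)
Your proposal is correct and follows essentially the same route as the paper's proof: bound the conditional variance of $\z_{t+1}$ by the second moment $\frac1n\sum_i\Norm{\nabla f_i(\x_{t+1})-\nabla f_i(\x_\tau)}^2$, apply smoothness and Cauchy--Schwarz over the window of at most $I=n$ steps since the last checkpoint, use monotonicity of $\eta_t$ and the at-most-$n$-fold double counting, and conclude via $2\beta L^2 I^2 = 2L^2/\beta \leq 12L^2/\beta$. The paper's argument is the same computation (with constant $2$ rather than $12$, the slack being absorbed a fortiori), so there is nothing to add.
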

\begin{proof} This time, we have $\z_{t+1} = \nabla f_{i_{t+1}}(\x_{t+1}) - \nabla f_{i_{t+1}}(\x_{\tau}) + \nabla F(\x_{\tau})$. And we can know that:
    \begin{align*}
    \begin{split}
        & \E_{i_{t+1}}\left[\eta_t\Norm{\nabla F(\x_{t+1}) -\z_{t+1}}^2\right] \\
        =&  \E_{i_{t+1}}\left[\eta_t\Norm{\nabla F(\x_{t+1}) -\nabla f_{i_{t+1}}(\x_{t+1}) +  \nabla f_{i_{t+1}}(\x_{\tau}) - \nabla F(\x_{\tau})}^2\right]\\
        \leq &  \E_{i_{t+1}}\left[\eta_t\Norm{\nabla f_{i_{t+1}}(\x_{t+1}) - f_{i_{t+1}}(\x_{\tau})}^2\right] \\
        \leq &   \eta_t L^2 \Norm{\x_{t+1}-\x_{\tau}}^2\\
        \leq &   \eta_t L^2 I \sum_{i=\tau}^{t}\Norm{\x_{i+1}-\x_{i}}^2\\
        \leq &   \eta_t L^2 I \sum_{i=\tau}^{t}\eta_i^2\Norm{\v_{i}}^2  \leq L^2 I \sum_{i=\tau}^{t}\eta_i^3\Norm{\v_{i}}^2.
    \end{split}
    \end{align*}
By summing up, we have
    \begin{align*}
&2\beta \sum_{t=1}^{T}\E\left[\eta_t\Norm{\nabla F(\x_{t+1}) -\z_{t+1}}^2\right]\\
        \leq &2\beta \E\left[\sum_{t=1}^{T} L^2 I \sum_{i=\tau}^{t}\eta_i^3\Norm{\v_{i}}^2 \right]
        \leq
        2\beta\LL^2 I^2\E\left[\sum_{t=1}^{T} \eta_t^3\Norm{\v_t}^2\right] \leq
        2\LL^2 \E\left[\sum_{t=1}^{T} \frac{\eta_t^3}{\beta}\Norm{\v_t}^2\right].
\end{align*}
\end{proof}
The other analysis is exactly the same as that of Theorem~\ref{thm:main_2}.

\newpage
\section*{NeurIPS Paper Checklist}
\begin{enumerate}

\item {\bf Claims}
    \item[] Question: Do the main claims made in the abstract and introduction accurately reflect the paper's contributions and scope?
    \item[] Answer: \answerYes{}
    \item[] Justification: The claims presented in the abstract and introduction accurately represent the contributions and scope of the paper.
    \item[] Guidelines:
    \begin{itemize}
        \item The answer NA means that the abstract and introduction do not include the claims made in the paper.
        \item The abstract and/or introduction should clearly state the claims made, including the contributions made in the paper and important assumptions and limitations. A No or NA answer to this question will not be perceived well by the reviewers. 
        \item The claims made should match theoretical and experimental results, and reflect how much the results can be expected to generalize to other settings. 
        \item It is fine to include aspirational goals as motivation as long as it is clear that these goals are not attained by the paper. 
    \end{itemize}

\item {\bf Limitations}
    \item[] Question: Does the paper discuss the limitations of the work performed by the authors?
    \item[] Answer: \answerYes{}
    \item[] Justification: The theoretical results demonstrated in the paper rely on specific assumptions, which have been clearly stated in the main text.
    \item[] Guidelines: 
    \begin{itemize}
        \item The answer NA means that the paper has no limitation while the answer No means that the paper has limitations, but those are not discussed in the paper. 
        \item The authors are encouraged to create a separate "Limitations" section in their paper.
        \item The paper should point out any strong assumptions and how robust the results are to violations of these assumptions (e.g., independence assumptions, noiseless settings, model well-specification, asymptotic approximations only holding locally). The authors should reflect on how these assumptions might be violated in practice and what the implications would be.
        \item The authors should reflect on the scope of the claims made, e.g., if the approach was only tested on a few datasets or with a few runs. In general, empirical results often depend on implicit assumptions, which should be articulated.
        \item The authors should reflect on the factors that influence the performance of the approach. For example, a facial recognition algorithm may perform poorly when image resolution is low or images are taken in low lighting. Or a speech-to-text system might not be used reliably to provide closed captions for online lectures because it fails to handle technical jargon.
        \item The authors should discuss the computational efficiency of the proposed algorithms and how they scale with dataset size.
        \item If applicable, the authors should discuss possible limitations of their approach to address problems of privacy and fairness.
        \item While the authors might fear that complete honesty about limitations might be used by reviewers as grounds for rejection, a worse outcome might be that reviewers discover limitations that aren't acknowledged in the paper. The authors should use their best judgment and recognize that individual actions in favor of transparency play an important role in developing norms that preserve the integrity of the community. Reviewers will be specifically instructed to not penalize honesty concerning limitations.
    \end{itemize}

\item {\bf Theory Assumptions and Proofs}
    \item[] Question: For each theoretical result, does the paper provide the full set of assumptions and a complete (and correct) proof?
    \item[] Answer: \answerYes{}
    \item[] Justification: The paper provides assumptions and proofs for each theoretical result.
    \item[] Guidelines:
    \begin{itemize}
        \item The answer NA means that the paper does not include theoretical results. 
        \item All the theorems, formulas, and proofs in the paper should be numbered and cross-referenced.
        \item All assumptions should be clearly stated or referenced in the statement of any theorems.
        \item The proofs can either appear in the main paper or the supplemental material, but if they appear in the supplemental material, the authors are encouraged to provide a short proof sketch to provide intuition. 
        \item Inversely, any informal proof provided in the core of the paper should be complemented by formal proofs provided in appendix or supplemental material.
        \item Theorems and Lemmas that the proof relies upon should be properly referenced. 
    \end{itemize}

    \item {\bf Experimental Result Reproducibility}
    \item[] Question: Does the paper fully disclose all the information needed to reproduce the main experimental results of the paper to the extent that it affects the main claims and/or conclusions of the paper (regardless of whether the code and data are provided or not)?
    \item[] Answer: \answerYes{}
    \item[] Justification: The paper discloses the information necessary to reproduce the main experimental results.
    \item[] Guidelines:
    \begin{itemize}
        \item The answer NA means that the paper does not include experiments.
        \item If the paper includes experiments, a No answer to this question will not be perceived well by the reviewers: Making the paper reproducible is important, regardless of whether the code and data are provided or not.
        \item If the contribution is a dataset and/or model, the authors should describe the steps taken to make their results reproducible or verifiable. 
        \item Depending on the contribution, reproducibility can be accomplished in various ways. For example, if the contribution is a novel architecture, describing the architecture fully might suffice, or if the contribution is a specific model and empirical evaluation, it may be necessary to either make it possible for others to replicate the model with the same dataset, or provide access to the model. In general. releasing code and data is often one good way to accomplish this, but reproducibility can also be provided via detailed instructions for how to replicate the results, access to a hosted model (e.g., in the case of a large language model), releasing of a model checkpoint, or other means that are appropriate to the research performed.
        \item While NeurIPS does not require releasing code, the conference does require all submissions to provide some reasonable avenue for reproducibility, which may depend on the nature of the contribution. For example
        \begin{enumerate}
            \item If the contribution is primarily a new algorithm, the paper should make it clear how to reproduce that algorithm.
            \item If the contribution is primarily a new model architecture, the paper should describe the architecture clearly and fully.
            \item If the contribution is a new model (e.g., a large language model), then there should either be a way to access this model for reproducing the results or a way to reproduce the model (e.g., with an open-source dataset or instructions for how to construct the dataset).
            \item We recognize that reproducibility may be tricky in some cases, in which case authors are welcome to describe the particular way they provide for reproducibility. In the case of closed-source models, it may be that access to the model is limited in some way (e.g., to registered users), but it should be possible for other researchers to have some path to reproducing or verifying the results.
        \end{enumerate}
    \end{itemize}

\item {\bf Open access to data and code}
    \item[] Question: Does the paper provide open access to the data and code, with sufficient instructions to faithfully reproduce the main experimental results, as described in supplemental material?
    \item[] Answer: \answerNo{} 
    \item[] Justification:  Due to privacy concerns and ongoing research, we do not include the code.
    \item[] Guidelines:
    \begin{itemize}
        \item The answer NA means that paper does not include experiments requiring code.
        \item Please see the NeurIPS code and data submission guidelines (\url{https://nips.cc/public/guides/CodeSubmissionPolicy}) for more details.
        \item While we encourage the release of code and data, we understand that this might not be possible, so “No” is an acceptable answer. Papers cannot be rejected simply for not including code, unless this is central to the contribution (e.g., for a new open-source benchmark).
        \item The instructions should contain the exact command and environment needed to run to reproduce the results. See the NeurIPS code and data submission guidelines (\url{https://nips.cc/public/guides/CodeSubmissionPolicy}) for more details.
        \item The authors should provide instructions on data access and preparation, including how to access the raw data, preprocessed data, intermediate data, and generated data, etc.
        \item The authors should provide scripts to reproduce all experimental results for the new proposed method and baselines. If only a subset of experiments are reproducible, they should state which ones are omitted from the script and why.
        \item At submission time, to preserve anonymity, the authors should release anonymized versions (if applicable).
        \item Providing as much information as possible in supplemental material (appended to the paper) is recommended, but including URLs to data and code is permitted.
    \end{itemize}

\item {\bf Experimental Setting/Details}
    \item[] Question: Does the paper specify all the training and test details (e.g., data splits, hyperparameters, how they were chosen, type of optimizer, etc.) necessary to understand the results?
    \item[] Answer: \answerYes{} 
    \item[] Justification: The paper describes the training and testing details.
    \item[] Guidelines:
    \begin{itemize}
        \item The answer NA means that the paper does not include experiments.
        \item The experimental setting should be presented in the core of the paper to a level of detail that is necessary to appreciate the results and make sense of them.
        \item The full details can be provided either with the code, in appendix, or as supplemental material.
    \end{itemize}

\item {\bf Experiment Statistical Significance}
    \item[] Question: Does the paper report error bars suitably and correctly defined or other appropriate information about the statistical significance of the experiments?
    \item[] Answer: \answerYes{} 
    \item[] Justification: The paper reports error bars.
    \item[] Guidelines:
    \begin{itemize}
        \item The answer NA means that the paper does not include experiments.
        \item The authors should answer "Yes" if the results are accompanied by error bars, confidence intervals, or statistical significance tests, at least for the experiments that support the main claims of the paper.
        \item The factors of variability that the error bars are capturing should be clearly stated (for example, train/test split, initialization, random drawing of some parameter, or overall run with given experimental conditions).
        \item The method for calculating the error bars should be explained (closed form formula, call to a library function, bootstrap, etc.)
        \item The assumptions made should be given (e.g., Normally distributed errors).
        \item It should be clear whether the error bar is the standard deviation or the standard error of the mean.
        \item It is OK to report 1-sigma error bars, but one should state it. The authors should preferably report a 2-sigma error bar than state that they have a 96\% CI, if the hypothesis of Normality of errors is not verified.
        \item For asymmetric distributions, the authors should be careful not to show in tables or figures symmetric error bars that would yield results that are out of range (e.g. negative error rates).
        \item If error bars are reported in tables or plots, The authors should explain in the text how they were calculated and reference the corresponding figures or tables in the text.
    \end{itemize}

\item {\bf Experiments Compute Resources}
    \item[] Question: For each experiment, does the paper provide sufficient information on the computer resources (type of compute workers, memory, time of execution) needed to reproduce the experiments?
    \item[] Answer: \answerYes{} 
    \item[] Justification: We have provided the relevant information.
    \item[] Guidelines:
    \begin{itemize}
        \item The answer NA means that the paper does not include experiments.
        \item The paper should indicate the type of compute workers CPU or GPU, internal cluster, or cloud provider, including relevant memory and storage.
        \item The paper should provide the amount of compute required for each of the individual experimental runs as well as estimate the total compute. 
        \item The paper should disclose whether the full research project required more compute than the experiments reported in the paper (e.g., preliminary or failed experiments that didn't make it into the paper). 
    \end{itemize}
    
\item {\bf Code Of Ethics}
    \item[] Question: Does the research conducted in the paper conform, in every respect, with the NeurIPS Code of Ethics \url{https://neurips.cc/public/EthicsGuidelines}?
    \item[] Answer: \answerYes{} 
    \item[] Justification: The research conducted in the paper conforms with the NeurIPS Code of Ethics.
    \item[] Guidelines:
    \begin{itemize}
        \item The answer NA means that the authors have not reviewed the NeurIPS Code of Ethics.
        \item If the authors answer No, they should explain the special circumstances that require a deviation from the Code of Ethics.
        \item The authors should make sure to preserve anonymity (e.g., if there is a special consideration due to laws or regulations in their jurisdiction).
    \end{itemize}

\item {\bf Broader Impacts}
    \item[] Question: Does the paper discuss both potential positive societal impacts and negative societal impacts of the work performed?
    \item[] Answer: \answerNA{} 
    \item[] Justification: This is primarily a theoretical paper with no potential negative social impact. 
    \item[] Guidelines:
    \begin{itemize}
        \item The answer NA means that there is no societal impact of the work performed.
        \item If the authors answer NA or No, they should explain why their work has no societal impact or why the paper does not address societal impact.
        \item Examples of negative societal impacts include potential malicious or unintended uses (e.g., disinformation, generating fake profiles, surveillance), fairness considerations (e.g., deployment of technologies that could make decisions that unfairly impact specific groups), privacy considerations, and security considerations.
        \item The conference expects that many papers will be foundational research and not tied to particular applications, let alone deployments. However, if there is a direct path to any negative applications, the authors should point it out. For example, it is legitimate to point out that an improvement in the quality of generative models could be used to generate deepfakes for disinformation. On the other hand, it is not needed to point out that a generic algorithm for optimizing neural networks could enable people to train models that generate Deepfakes faster.
        \item The authors should consider possible harms that could arise when the technology is being used as intended and functioning correctly, harms that could arise when the technology is being used as intended but gives incorrect results, and harms following from (intentional or unintentional) misuse of the technology.
        \item If there are negative societal impacts, the authors could also discuss possible mitigation strategies (e.g., gated release of models, providing defenses in addition to attacks, mechanisms for monitoring misuse, mechanisms to monitor how a system learns from feedback over time, improving the efficiency and accessibility of ML).
    \end{itemize}
    
\item {\bf Safeguards}
    \item[] Question: Does the paper describe safeguards that have been put in place for responsible release of data or models that have a high risk for misuse (e.g., pretrained language models, image generators, or scraped datasets)?
    \item[] Answer: \answerNA{} 
    \item[] Justification: The paper poses no such risks.
    \item[] Guidelines:
    \begin{itemize}
        \item The answer NA means that the paper poses no such risks.
        \item Released models that have a high risk for misuse or dual-use should be released with necessary safeguards to allow for controlled use of the model, for example by requiring that users adhere to usage guidelines or restrictions to access the model or implementing safety filters. 
        \item Datasets that have been scraped from the Internet could pose safety risks. The authors should describe how they avoided releasing unsafe images.
        \item We recognize that providing effective safeguards is challenging, and many papers do not require this, but we encourage authors to take this into account and make a best faith effort.
    \end{itemize}

\item {\bf Licenses for existing assets}
    \item[] Question: Are the creators or original owners of assets (e.g., code, data, models), used in the paper, properly credited and are the license and terms of use explicitly mentioned and properly respected?
    \item[] Answer: \answerYes{} 
    \item[] Justification: The creators or original owners of assets used in the paper are properly credited and the license and terms of use explicitly are properly respected.
    \item[] Guidelines:
    \begin{itemize}
        \item The answer NA means that the paper does not use existing assets.
        \item The authors should cite the original paper that produced the code package or dataset.
        \item The authors should state which version of the asset is used and, if possible, include a URL.
        \item The name of the license (e.g., CC-BY 4.0) should be included for each asset.
        \item For scraped data from a particular source (e.g., website), the copyright and terms of service of that source should be provided.
        \item If assets are released, the license, copyright information, and terms of use in the package should be provided. For popular datasets, \url{paperswithcode.com/datasets} has curated licenses for some datasets. Their licensing guide can help determine the license of a dataset.
        \item For existing datasets that are re-packaged, both the original license and the license of the derived asset (if it has changed) should be provided.
        \item If this information is not available online, the authors are encouraged to reach out to the asset's creators.
    \end{itemize}

\item {\bf New Assets}
    \item[] Question: Are new assets introduced in the paper well documented and is the documentation provided alongside the assets?
    \item[] Answer: \answerNA{} 
    \item[] Justification: The paper does not release new assets.
    \item[] Guidelines:
    \begin{itemize}
        \item The answer NA means that the paper does not release new assets.
        \item Researchers should communicate the details of the dataset/code/model as part of their submissions via structured templates. This includes details about training, license, limitations, etc. 
        \item The paper should discuss whether and how consent was obtained from people whose asset is used.
        \item At submission time, remember to anonymize your assets (if applicable). You can either create an anonymized URL or include an anonymized zip file.
    \end{itemize}

\item {\bf Crowdsourcing and Research with Human Subjects}
    \item[] Question: For crowdsourcing experiments and research with human subjects, does the paper include the full text of instructions given to participants and screenshots, if applicable, as well as details about compensation (if any)? 
    \item[] Answer: \answerNA{} 
    \item[] Justification: The paper does not involve crowdsourcing nor research with human subjects.
    \item[] Guidelines:
    \begin{itemize}
        \item The answer NA means that the paper does not involve crowdsourcing nor research with human subjects.
        \item Including this information in the supplemental material is fine, but if the main contribution of the paper involves human subjects, then as much detail as possible should be included in the main paper. 
        \item According to the NeurIPS Code of Ethics, workers involved in data collection, curation, or other labor should be paid at least the minimum wage in the country of the data collector. 
    \end{itemize}

\item {\bf Institutional Review Board (IRB) Approvals or Equivalent for Research with Human Subjects}
    \item[] Question: Does the paper describe potential risks incurred by study participants, whether such risks were disclosed to the subjects, and whether Institutional Review Board (IRB) approvals (or an equivalent approval/review based on the requirements of your country or institution) were obtained?
    \item[] Answer: \answerNA{} 
    \item[] Justification: the paper does not involve crowdsourcing nor research with human subjects.
    \item[] Guidelines:
    \begin{itemize}
        \item The answer NA means that the paper does not involve crowdsourcing nor research with human subjects.
        \item Depending on the country in which research is conducted, IRB approval (or equivalent) may be required for any human subjects research. If you obtained IRB approval, you should clearly state this in the paper. 
        \item We recognize that the procedures for this may vary significantly between institutions and locations, and we expect authors to adhere to the NeurIPS Code of Ethics and the guidelines for their institution. 
        \item For initial submissions, do not include any information that would break anonymity (if applicable), such as the institution conducting the review.
    \end{itemize}

\end{enumerate}

\end{document}